\newtheorem{theorem}{Theorem}[section]
\newtheorem*{theorem*}{Theorem B} 
\newtheorem{lemma}[theorem]{Lemma}
\newtheorem{proposition}[theorem]{Proposition}
\newtheorem{corollary}[theorem]{Corollary}
\newtheorem{definition}[theorem]{Definition}
\newtheorem*{definition*}{Definition}
\newtheorem*{remark*}{Remark}
\newtheorem*{observation*}{Observation}
\newtheorem{assumption}{Assumption}
\newtheorem*{assumption*}{Assumption}
\newtheorem*{question*}{Question}
\newtheorem{remark}[theorem]{Remark}
\newtheorem{example}[theorem]{Example}
\newcommand{\R}{\mathbb{R}}
\newcommand{\N}{\mathbb{N}}
\newcommand{\Z}{\mathbb{Z}}
\newcommand{\D}{\mathbb{D}}
\newcommand{\C}{\mathbb{C}}
\newcommand{\E}{\mathbb{E}}
\newcommand{\PP}{\mathbb{P}}
\newcommand{\X}{\mathfrak{X}}
\newcommand{\Conf}{\mathrm{Conf}}
\newcommand{\tr}{\mathrm{tr}}
\newcommand{\rank}{\mathrm{rank}}
\newcommand{\Unif}{\mathrm{Unif}}
\newcommand{\Det}{\mathrm{det}}
\newcommand{\Ran}{\mathrm{Ran}}
\newcommand{\Ker}{\mathrm{Ker}}
\newcommand{\Diff}{\mathrm{Diff}}
\newcommand{\an}{\text{\, and \,}}
\DeclareMathOperator*{\essinf}{\mathrm{essinf}}
\begin{document}

\title[Palm equivalence for Bergman DPP]{Equivalence of Palm measures for determinantal point processes governed by Bergman kernels}

\author
{Alexander I. Bufetov}
\address
{Alexander I. BUFETOV: 
Aix-Marseille Universit\'e, Centrale Marseille, CNRS, Institut de Math\'ematiques de Marseille, UMR7373, 39 Rue F. Joliot Curie 13453, Marseille, France;
Steklov Mathematical Institute of RAS, Moscow, Russia;
Institute for Information Transmission Problems, Moscow, Russia;
National Research University Higher School of Economics, Moscow, Russia;
The Chebyshev Laboratory, Saint-Petersburg State University, St-Petersburg, Russia}
\email{bufetov@mi.ras.ru, alexander.bufetov@univ-amu.fr}

\author
{Shilei Fan}
\address
{Shilei FAN: 
School of Mathematics and Statistics, Hubei Key Laboratory of Mathematical Sciences, Central  China Normal University,  Wuhan, 430079, China;
 Aix-Marseille Universit\'e, Centrale Marseille, CNRS, Institut de Math\'ematiques de Marseille, UMR7373, 39 Rue F. Joliot Curie 13453, Marseille, France}
\email{ slfan@mail.ccnu.edu.cn}

\author
{Yanqi Qiu}
\address
{Yanqi QIU: CNRS, Institut de Math{\'e}matiques de Toulouse, Universit{\'e} Paul Sabatier, 118 Route de Narbonne, F-31062 Toulouse Cedex 9, France}
\email{yqi.qiu@gmail.com}

\begin{abstract}
For a determinantal point process induced by the reproducing kernel of the weighted Bergman space $A^2(U, \omega)$ over a domain $U \subset \mathbb{C}^d$, we establish the mutual absolute continuity of  reduced Palm measures  of any order provided that the domain $U$ contains a non-constant bounded holomorphic function. The result holds in all dimensions.
The argument uses the $H^\infty(U)$-module structure of $A^2(U, \omega)$.  A corollary is the quasi-invariance of our determinantal point process  under the natural action of the group of compactly supported diffeomorphisms of $U$.  
\end{abstract}

\subjclass[2010]{Primary 60G55; Secondary 32A36}
\keywords{Bergman kernel; determinantal point process; conditional measure; deletion and insertion tolerance; Palm equivalence; monotone coupling}

\maketitle

\setcounter{equation}{0}

\section{Introduction}
\subsection{Formulation of the main results}
How does a point process change once conditioned to contain a particle at a given site? The question, one of the oldest  in the theory of point processes, goes back to the work of Palm \cite{palm} and Khintchine \cite{khin}. In this paper  we study Palm distributions of determinantal point processes governed by Bergman kernels. 

Let $U$ be a  non-empty connected open subset of the $d$-dimensional complex space $\C^d$.  
Let $\omega: U \rightarrow (0, \infty)$ be a Borel function.
Consider the weighted Bergman space 
\[
A^2(U, \omega): = \Big\{ \text{$f: U\rightarrow \C$ \Big| $f$ is holomorphic on $U$ and
 $\int_U | f(z)|^2 \omega(z) dV(z) < \infty$}\Big\}, 
\]
where  $V$ stands for the  Lebesgue measure on $\C^d$.  We assume  that $A^2(U, \omega) \ne \{0\}$  and  that  for any relatively compact subset $B\subset U$ satisfying $V(B)>0$ we have 
\begin{align}\label{weight-low}
\essinf_{z \in B} \omega(z)  > 0.
\end{align}
If \eqref{weight-low} holds, then the linear space $A^2(U, \omega)$ is closed in $L^2(U, \omega dV)$ and admits a reproducing kernel $K_{\omega}$,  called the {\it weighted Bergman kernel}. The operator, for which we keep the same symbol $K_\omega$, of orthogonal projection from $L^2(U, \omega dV)$ onto the subspace $A^2(U, \omega)$ is given by the formula
$$
(K_\omega \varphi)(z) =  \int_{U} K_\omega(z, u)  \varphi(u) \omega(u) dV(u) \,\, \text{for  $\varphi \in L^2(U, \omega dV)$.}
$$
The kernel $K_{\omega}$  induces a determinantal point process $\PP_{K_\omega}$ on $U$ (see  \S \ref{sec-example}  for  the definition of determinantal point processes and Palm measures). 
Recall that two measures are called {\it equivalent} if they are mutually absolutely continuous. The main result  of this paper is 
\begin{theorem}\label{thm-palm-eq}
If the domain $U$ admits a non-constant bounded holomorphic function, 
then the determinantal measure $\PP_{K_\omega}$ is equivalent to 
its arbitrary  reduced Palm measure, of any order.
\end{theorem}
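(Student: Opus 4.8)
The plan is to reduce Theorem~\ref{thm-palm-eq} to a first-order statement valid for an arbitrary $H^\infty(U)$-submodule of $A^2(U,\omega)$, and then to prove that statement via multiplication operators. By the theorem of Shirai and Takahashi, for a determinantal point process governed by the orthogonal projection $\Pi_L$ onto a reproducing kernel Hilbert space $L$ of holomorphic functions on $U$, with reproducing kernel $K_L$, the reduced Palm measure at distinct points $p_1,\dots,p_n$ satisfying $K_L(p_i,p_i)>0$ is again determinantal, governed by the projection onto
\[
L^{p_1,\dots,p_n}:=\{f\in L \,:\, f(p_1)=\dots=f(p_n)=0\},
\]
a subspace of $L$ of codimension $n$. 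Since $A^2(U,\omega)$ is a module over $H^\infty(U)$ under pointwise multiplication, and the defining vanishing conditions are stable under multiplication by bounded holomorphic functions, each $L^{p_1,\dots,p_n}$ is again a closed $H^\infty(U)$-submodule of $A^2(U,\omega)$, and is a reproducing kernel Hilbert space in its own right. Reduced Palm measures compose: the order-$n$ Palm measure at $(p_1,\dots,p_n)$ is the order-one Palm measure, at $p_n$, of the order-$(n-1)$ Palm measure at $(p_1,\dots,p_{n-1})$ --- itself, by the preceding remark, the determinantal measure of an $H^\infty(U)$-submodule. Hence, by induction on $n$, it suffices to prove: \emph{for every non-zero closed $H^\infty(U)$-submodule $L\subset A^2(U,\omega)$ and every $p\in U$ with $K_L(p,p)>0$, the measures $\PP_{\Pi_L}$ and $\PP_{\Pi_{L^p}}$ are equivalent}, where $L^p:=\{f\in L:f(p)=0\}$.

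\textbf{The two half-statements and the multiplier.} This equivalence splits into the deletion relation $\PP_{\Pi_{L^p}}\ll\PP_{\Pi_L}$ and the insertion relation $\PP_{\Pi_L}\ll\PP_{\Pi_{L^p}}$. Neither is automatic for projection determinantal processes: both fail already for the Ginibre point process, the determinantal measure of $A^2(\C,e^{-|z|^2}\,dV)$, where the number-rigidity of Ghosh and Peres obstructs each direction --- consistently with the fact that $\C$ carries no non-constant bounded holomorphic function. The hypothesis on $U$ is precisely what removes this obstruction. Fix a non-constant $\psi\in H^\infty(U)$ and put $\varphi:=(\psi-\psi(p))/(2\|\psi\|_\infty)\in H^\infty(U)$, so that $\varphi$ is non-constant, $\varphi(p)=0$, $\|\varphi\|_\infty\le1$, and therefore $|\varphi(z)|<1$ for every $z\in U$ by the maximum modulus principle on the connected domain $U$. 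Multiplication by $\varphi$ is an injective contraction $M_\varphi$ of $A^2(U,\omega)$ which, because $L$ is an $H^\infty(U)$-module and $\varphi(p)=0$, maps $L$ into $L^p$; we set $L_\varphi:=\overline{M_\varphi L}$, so that $L_\varphi\subseteq L^p\subseteq L$.

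\textbf{Transfer of absolute continuity through $L_\varphi$.} The heart of the argument is the equivalence $\PP_{\Pi_L}\sim\PP_{\Pi_{L_\varphi}}$. Since $L_\varphi=\overline{\varphi L}$ and $\varphi/|\varphi|$ is a unimodular multiplier, conjugating by it alters the governing kernel only by a unimodular diagonal factor, so $\PP_{\Pi_{L_\varphi}}$ has the same correlation functions --- hence is the same measure --- as the determinantal process governed by the projection onto $\overline{|\varphi|\,L}$; this presents $L_\varphi$ as the image of $L$ under the genuine positive bounded weight $|\varphi|$, to be studied through the theory of multiplicative functionals for determinantal processes, the condition $K_L(p,p)>0$ ensuring that the relevant normalizations are finite and positive. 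Granting $\PP_{\Pi_L}\sim\PP_{\Pi_{L_\varphi}}$, one invokes the monotone (Lyons) couplings along the inclusions $L_\varphi\subseteq L^p\subseteq L$ --- noting that $\Pi_L-\Pi_{L^p}$ is the rank-one projection onto the normalized reproducing kernel at $p$, so $\PP_{\Pi_{L^p}}$ differs from $\PP_{\Pi_L}$ by exactly one particle --- to squeeze $\PP_{\Pi_{L^p}}$ between two mutually equivalent measures and conclude $\PP_{\Pi_{L^p}}\sim\PP_{\Pi_L}$. Combined with the reduction of the first paragraph this proves the theorem; the stated corollary on quasi-invariance under compactly supported diffeomorphisms of $U$ then follows, such a diffeomorphism affecting only finitely many of the degrees of freedom controlled by the Palm calculus.

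\textbf{The main obstacle.} All the difficulty is concentrated in the equivalence $\PP_{\Pi_L}\sim\PP_{\Pi_{L_\varphi}}$. The problem is that $M_\varphi$ is merely a contraction and not bounded below --- $|\varphi|$ may decay to $0$ along $\partial U$ --- so $L_\varphi$ need not be a finite-codimension, or even a "tame", perturbation of $L$, and the naive multiplicative functional $\prod_{x\in X}|\varphi(x)|^2$ vanishes $\PP_{\Pi_L}$-almost surely. Extracting two-sided absolute continuity from such a multiplier is where the real work lies: the natural route is to exhaust $U$ by relatively compact open subsets, decompose each configuration into its restrictions to the pieces, and control uniformly the conditional (Gibbs) measures of $\PP_{\Pi_L}$ on each piece --- and it is exactly here that the $H^\infty(U)$-module structure, rather than the mere existence of $A^2(U,\omega)$, is indispensable.
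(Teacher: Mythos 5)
Your reduction to a first-order statement for $H^\infty(U)$-submodules is sound and agrees with the paper's own remark, and you correctly identify the two half-statements to prove. But the argument is incomplete in two places, and the second is fatal.

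First, the entire weight of the proof is deferred to the claim $\PP_{\Pi_L}\sim\PP_{\Pi_{L_\varphi}}$, which you never establish. You candidly note that the naive multiplicative functional $\prod_{x\in\X}|\varphi(x)|^2$ vanishes $\PP_{\Pi_L}$-almost surely and that a regularization is needed; but the regularized-multiplicative-functional route is precisely the approach of Bufetov--Qiu for the disc, which the paper abandons for the present level of generality (it is not available for, say, multiply connected domains, as the authors explicitly note in their historical remarks). What the paper does instead is exploit the module structure in a different, softer way: it shows that for the conditional operator $K_\omega^{[\X,B^c]}$, the eigenvalue-$1$ eigenspace is an $H^\infty(U)$-module, hence either zero or infinite-dimensional; since the operator is compact, it must be strictly contractive, and this yields deletion tolerance directly, with no multiplicative functional at all. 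The converse direction, insertion tolerance, then requires the real analyticity of the conditional kernels (preserved because uniform convergence on compacta preserves holomorphy) together with the Hough--Krishnapur--Peres--Vir\'ag decomposition. None of this is in your proposal, and nothing in your proposal substitutes for it.

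Second, even granting $\PP_{\Pi_L}\sim\PP_{\Pi_{L_\varphi}}$, the ``squeeze'' step is invalid as stated. Monotone coupling along $L_\varphi\subseteq L^p\subseteq L$ gives stochastic domination $\PP_{\Pi_{L_\varphi}}\preceq\PP_{\Pi_{L^p}}\preceq\PP_{\Pi_L}$. But stochastic order does not interact with absolute continuity the way you need: knowing that the two endpoints of a stochastic chain are equivalent does not imply the middle is absolutely continuous with respect to either. To pass from a monotone coupling to absolute continuity one needs precisely a tolerance property (deletion tolerance of $\PP_{\Pi_L}$ for $\PP_{\Pi_{L^p}}\ll\PP_{\Pi_L}$, and insertion tolerance of $\PP_{\Pi_{L^p}}$ together with the fact that the conditional law of the discrepancy set is diffusive, for the reverse). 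These are exactly the two lemmas the paper proves and your proposal does not, so the squeeze is circular: it presupposes the very properties it is meant to avoid establishing.
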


Equivalence of Palm measures was previously  only obtained for the disk in one dimension: for the uniform weight in Holroyd and Soo  \cite{HolSoo} and  for a class of non-uniform weights in \cite{QB3}. Theorem \ref{thm-palm-eq}  holds for   arbitrary domains in all dimensions.
An immediate corollary of Theorem \ref{thm-palm-eq} is that the measure $\PP_{K_\omega}$ is quasi-invariant under the action of the group 
of compactly supported diffeomorphisms of $U$, see Corollary \ref{quasi-inv} below.  

Recall that the Hardy space $H^\infty(U)$ is defined by the formula
\[
H^\infty(U): = \Big\{ \text{$f: U\rightarrow \C$ \Big| $f$ is holomorphic on $U$ and $\sup_{z\in U} | f(z)| < \infty$} \Big\}.
\]
At the centre of the  proof of  Theorem \ref{thm-palm-eq} lies  the  $H^\infty(U)$-module structure of the space
$A^2(U, \omega)$.  To show how this module structure is used, we give a quick outline for the proof of {\it deletion tolerance} for our point process $\PP_{K_\omega}$. Deletion tolerance 
claims the positivity of the conditional probability, with respect to fixed exterior, of the absence of particles in a bounded domain.
This gap probability is equal to $\det(1-K)$, where $K$ is the corresponding conditional kernel, whose existence follows from the results of  \cite{BQS16}. If the gap probability is zero, then $K$ has a nonzero  invariant vector. By compactness of $K$, the invariant subspace of $K$ has finite dimension. We show, however, that the subspace of invariant vectors for the conditional operator $K$ is preserved under multiplication by bounded holomorphic functions and thus has infinite dimension, a contradiction.

Theorem \ref{thm-palm-eq} has a natural analogue for spaces of $q$-holomorphic functions on  domains in the  complex plane.
Let $D \subset \C$ be an open connected subset, and let  $q\in \N$. A continuous function $f: D \rightarrow\C$ is called $q$-holomorphic if it satisfies the  partial differential equation
$$
\bar{\partial}_z^q f(z) = 0, \quad z \in D,
$$
 understood as an equality of  distributions of Sobolev and Schwartz; 
here $\bar{\partial}_z = \frac{1}{2}( \partial_x + i \partial_y)$ in the standard coordinate system $z = x + i y$.   Define 
\[
A^2_q(D, \omega): = \Big\{\text{$f: D \rightarrow \C$ \Big| $f$  is $q$-holomorphic on $D$ and $\int_D | f(z)|^2 \omega(z) d V(z) < \infty$}\Big\}. 
\]
Under the assumption \eqref{weight-low}, the space $A^2_q(D, \omega)$ is closed in $L^2(D, \omega dV)$ and admits a reproducing kernel.  These spaces and the corresponding determinantal point processes have been studied by Haimi and Hedenmalm \cite{Haimi-Hedenmalm-JSP, Haimi-Hedenmalm}.  Note that $A_q^2(D,\omega)$ is an $H^\infty(D)$-module.  Let  $H \subset A_q^2(D,\omega)$ be a non-zero closed subspace which is an $H^\infty(D)$-submodule of $A_q^2(D,\omega)$, and let $\Pi_H$ be the orthogonal projection from $L^2(D, \omega dV)$ onto $H$.  The opertor $\Pi_H$ is locally of  trace class and induces a 
determinantal point process $\PP_{\Pi_H}$ on $D$.

\begin{theorem}\label{thm-q-hol} 
If $H^{\infty}(D)$  contains  a non-constant  function, then  the determinantal measure $\PP_{\Pi_H}$ is equivalent to 
its arbitrary  reduced Palm measure, of any order. 
\end{theorem}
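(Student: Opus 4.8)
The plan is to transplant the argument behind Theorem~\ref{thm-palm-eq} to the $q$-holomorphic setting. Two structural inputs are used throughout: the $H^\infty(D)$-module structure of $H$, and the unique continuation property for $\bar{\partial}_z^q$ — a $q$-holomorphic function on the connected set $D$ that vanishes on a non-empty open subset is identically zero, since solutions of $\bar{\partial}_z^q f=0$ are real-analytic. In particular, fixing a non-constant $\psi\in H^\infty(D)$, for every $f\in H\setminus\{0\}$ the functions $f,\psi f,\psi^2 f,\dots$ are linearly independent, so any non-zero closed $H^\infty(D)$-submodule of $A^2_q(D,\omega)$ is infinite-dimensional. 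The first reduction is to the case of reduced Palm measures of order one: by the tower property of reduced Palm measures and the transitivity of mutual absolute continuity, it suffices to prove that for every non-zero closed $H^\infty(D)$-submodule $G\subset A^2_q(D,\omega)$ and every $p$ with $\Pi_G(p,p)>0$ one has $\PP_{\Pi_G}\sim\PP_{\Pi_{G_p}}$, where $G_p:=\{f\in G:f(p)=0\}$; note that $G_p$ is again a closed $H^\infty(D)$-submodule (of codimension one in $G$, with $G=G_p\oplus\C k_p$ for $k_p$ the reproducing kernel of $G$ at $p$), hence itself infinite-dimensional, and its projection is locally of trace class as a finite-rank perturbation of $\Pi_G$, so the reduced Palm measure of order one of $\PP_{\Pi_G}$ at $p$ really is the determinantal process $\PP_{\Pi_{G_p}}$.

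To prove $\PP_{\Pi_G}\sim\PP_{\Pi_{G_p}}$ I would first establish the tolerance properties of $\PP_{\Pi_G}$ by localizing. Fix a relatively compact open $B\subset D$ with $p\in B$, and condition $\PP_{\Pi_G}$ on its restriction to the exterior $D\setminus B$; by the conditional-measure theory of \cite{BQS16}, for almost every exterior configuration $\eta$ the conditional law on $B$ is the determinantal process with a locally trace-class kernel $K_\eta$ on $L^2(B,\omega\,dV)$, obtained by compressing to $B$ a projection onto a conditional subspace which inherits from $G$ the structure of an $H^\infty(D)$-module. \emph{Deletion tolerance} — the crux, argued exactly as in the outline preceding Theorem~\ref{thm-palm-eq} — is that $1\notin\spec(K_\eta)$, equivalently that the conditional gap probability $\det(1-K_\eta)$ is strictly positive: if $K_\eta v=v$ with $v\ne 0$, then extremality in $\|K_\eta\|\le 1$ forces $v$ into the conditional subspace, so the $1$-eigenspace of $K_\eta$ is stable under multiplication by $\psi\restriction_B$; since $v\not\equiv 0$, the vectors $v,\psi v,\psi^2 v,\dots$ are linearly independent by unique continuation for $\bar{\partial}_z^q$, contradicting the compactness of $K_\eta$. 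A parallel analysis of the conditional kernels, using additionally that a non-zero $q$-holomorphic function does not vanish on any open subset of $B$ (so the conditional range is not concentrated off a sub-ball) and that this range is infinite-dimensional, yields the complementary \emph{insertion tolerance}.

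Finally I would glue these together. Since $G_p\subset G$ with $\dim(G\ominus G_p)=1$, the monotone coupling for determinantal processes governed by nested projections (Goldman, Lyons) realizes, on all of $D$, the sample $X'$ of $\PP_{\Pi_{G_p}}$ as a subconfiguration of the sample $X$ of $\PP_{\Pi_G}$ with $X\setminus X'$ a single particle whose conditional law is absolutely continuous. Deletion tolerance of $\PP_{\Pi_G}$ then gives $\PP_{\Pi_{G_p}}\ll\PP_{\Pi_G}$ (removing one particle preserves absolute continuity), and insertion tolerance of $\PP_{\Pi_{G_p}}$ gives $\PP_{\Pi_G}\ll\PP_{\Pi_{G_p}}$ (adding the one particle preserves it); hence $\PP_{\Pi_G}\sim\PP_{\Pi_{G_p}}$, which by the first reduction completes the proof.

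I expect the main obstacle to be the passage from the conditional/local statements to the global comparison. Extracting from \cite{BQS16} a description of the conditional subspace attached to an almost-surely-infinite exterior configuration that is concrete enough to verify its $H^\infty(D)$-invariance, proving insertion tolerance with comparable precision, and then controlling the Radon--Nikodym behaviour of the deletion and insertion maps across the infinitely many particles of $\PP_{\Pi_G}$ on the non-compact domain $D$ — this last being where the monotone coupling and the conditional-measure machinery are genuinely needed — are the steps where the real work lies; the $H^\infty(D)$-module bookkeeping in the earlier steps is then largely routine.
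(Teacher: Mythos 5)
Your proposal follows the same skeleton as the paper's argument (deletion tolerance via the $H^\infty(D)$-module structure and compactness of the conditional operator, insertion tolerance via conditioning, gluing by monotone coupling), and the optional reduction to order-one Palm measures via the tower property is sound given that $G_p$ remains a non-zero closed $H^\infty(D)$-submodule. The deletion-tolerance half is essentially correct and matches Lemma \ref{lem-strict-c}.

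However, there is a genuine gap in the insertion-tolerance half, which your proposal dismisses as ``a parallel analysis of the conditional kernels.'' The decisive technical fact the paper needs --- both for number insertion tolerance (Proposition \ref{prop-anal}) and for the diffusive property of the conditional measures (Lemma \ref{lem-abs}), which in turn feeds the monotone-coupling gluing step (Lemma \ref{lem-re-f}) --- is that the \emph{conditional kernel} $(\Pi_H^{\mathfrak p})^{[\X,B^c]}(z,w)$ is real analytic on $B\times B$. This is not automatic, and your cited ingredients (unique continuation for $\bar\partial_z^q$, infinite dimensionality of $H^\infty(D)$-submodules) do not yield it. As Remark \ref{real-complex-rem} stresses, real analyticity is not preserved under uniform convergence on compacts, so one cannot simply pass to the limit in the defining series $\sum_k \lambda_k \varphi_k(z)\overline{\varphi_k(w)}$; the paper instead shows the eigenfunctions $\varphi_k$ are themselves $q$-holomorphic (as $L^2$-limits of $q$-holomorphic functions), uses Balk's decomposition $f(z)=\sum_{j=0}^{q-1}\bar z^j f_j(z)$ with $f_j$ holomorphic, and invokes the closedness of the class of $q$-holomorphic functions under locally uniform convergence to conclude that the limiting kernel retains a $q$-holomorphic-in-$z$, anti-$q$-holomorphic-in-$w$ structure and is hence real analytic. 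Your concluding paragraph flags ``the real work'' as being about $H^\infty$-invariance bookkeeping and Radon--Nikodym control over infinitely many particles, but the actual obstruction is precisely this analyticity of the conditional kernel; without it, neither the number-insertion-tolerance dichotomy (nonzero real-analytic kernel cannot vanish on $B\times E$ forces infinite rank) nor the diffusive comparison $\PP_{K}(\cdot\mid\#_S=n)\simeq\pi^{(n)}_*((dV|_S)^{\otimes n})$ goes through, and the claim that $\mathscr X\setminus\mathscr Y$ has conditionally absolutely continuous law remains unjustified.
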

Note that if  $\C \setminus \overline{D}$ has nonempty interior, then $H^\infty(D)$  contains a non-constant  function.

\subsection{Examples of Bergman kernels}

In a few cases weighted Bergman kernels can be calculated explicitly (see Krantz \cite[Chapter 1]{Krantz} for more details). To stress dependence on $U$, we write $K_{U, \omega}$ for $K_{\omega}$, and, in the case $\omega \equiv 1$, for brevity we write  $K_U$ for $K_{U, 1}$.  
\begin{example}\label{ex-classical}
The weighted Bergman kernel for the classical weight on the unit disk $\D \subset \C$ given by $\omega_\alpha (z) =  ( 1 + \alpha) ( 1 - | z|^2)^\alpha$ with $- 1 < \alpha < \infty$, is given by 
\[
K_{\omega_\alpha} (z, w) = \frac{1}{\pi ( 1 - z \bar{w})^{2 + \alpha}}. 
\]
\end{example}

\begin{example}
The Bergman kernel for the circular annulus $A_\rho = \{z \in \C: \rho < | z| <1 \}$ was calculated by Bergman \cite[p. 10]{Bergman-book}:  
\[
K_{A_\rho}(z, w) = \frac{1}{\pi z \bar{w}} \Big(   \wp (\ln (z\bar{w})) + \frac{\eta_1}{\pi i} - \frac{1} {2\ln \rho} \Big), 
\]
where $\wp$ is the Weierstrass function with the periods $\pi i$ and $\ln \rho$, and $\eta_1$ is the half-increment of the Weierstrass $\zeta$-function related to the period $\pi i$.
\end{example}

\begin{example}
The Bergman kernel of the polydisk $\D^d$ is given by 
\[
K_{\D^d} (z, w)  = \prod_{j = 1}^d K_\D (z_j, w_j) = \prod_{j=1}^d \frac{1}{\pi ( 1 - z_j \bar{w}_j)^2}. 
\]
\end{example}

\begin{example}
The  Bergman kernel of the unit ball $\mathbb{B}_d =\{z \in \C^d: \sum_{j=1}^d | z_j|^2 < 1\}  $ is given by 
\[
K_{\mathbb{B}^d} (z, w) = \frac{1}{V(\mathbb{B}_d)} \frac{1}{( 1 - z \cdot \bar{w})^{d + 1}}, 
\]
where $V(\mathbb{B}_d)$ denotes the volume of the unit ball $\mathbb{B}_d$ and $z \cdot \bar{w}: = \sum_{j=1}^d z_j \cdot \bar{w}_j$.
\end{example}

 Theorem \ref{thm-palm-eq} applies to all these examples.

\subsection{Historical remarks}\label{sec-his}

Recall that the Bergman kernel $K_\D$ of the unit disk $\D\subset \C$ is given by the formula
\[
K_\D (z, w) = \frac{1}{\pi ( 1 - z \bar{w})^2}. 
\]
A remarkable theorem of Peres and Vir{\' a}g \cite{PV-acta} is that  the determinantal point process $\PP_{K_\D}$  describes the zero set $Z(f) = \{z \in \D: f(z)  = 0\}$ of the Gaussian analytic function 
\[
f( z) = \sum_{n=0}^\infty g_n z^n,
\]
 where $(g_n)_{n=0}^\infty$ is a sequence of independent standard complex Gaussian random variables. The equivalence of the determinantal point process $\PP_{K_\D}$ to its Palm measures  is due to Holroyd and Soo \cite{HolSoo}.   The argument of Holroyd and Soo uses the  zero set representation of our process $Z(f)$ and, as far as we are aware,   can not be generalized.
For the unit disk $\D$,  Theorem \ref{thm-palm-eq}  was established for all the determinantal point processes $\PP_{K_{\D, \omega}}$ induced by the weighted Bergman kernels $K_{\D, \omega}$ in \cite{QB3} under the additional assumption
\begin{align}\label{w-ass}
\int_\D (1 - | z|)^2 K_{\D, \omega}(z,z) \omega(z)dV(z)< \infty.
\end{align}
Note that all the classical weights $\omega_\alpha$ introduced in Example \ref{ex-classical} satisfy the condition \eqref{w-ass}. Moreover, the Radon-Nikodym derivative between the process $\PP_{K_{\D, \omega}}$ and its Palm measure is explicitly computed in \cite{QB3} and represented as a  regularized multiplicative functional corresponding to a divergent Blaschke product.

Even in one dimension, however, the formalism of \cite{QB3} does not seem to be applicable to bounded
multiply connected domains, for example, the annuli. In fact, while Theorem \ref{thm-palm-eq} applies to
 the reproducing kernel of the Hilbert  space of square-integrable holomorphic functions on any annulus, we are not able to compute  the Radon-Nikodym derivatives explicitly in this case. 

In the case of domains having the Liouville property, that is, admitting only constant bounded holomorphic functions, the behaviour of determinantal point processes governed by reproducing kernels of Hilbert spaces of holomorphic functions is quite different. 
For example, consider $U  = \C$ and  recall that the Ginibre point process induced by the reproducing kernel of  the space 
$
A^2(\C, e^{-|z|^2} dV). 
$
Ghosh and Peres \cite{Ghosh-rigid} proved that the Ginibre point process  is {\it number rigid}, that is, for any relatively compact $B\subset \C$, the number of particles inside $B$ is almost surely measurably determined by the configuration outside $B$. In particular, number rigidity implies that all reduced Palm measures of different orders are singular. Conversely,  Osada and Shirai \cite{OS}  proved that reduced Palm measures of the same order are mutually absolutely continuous for the Ginibre point process.  For generalized Ginibre point processes, corresponding to the weighted Fock spaces 
$
A^2(\C, e^{-2\psi(z)} dV)$ 
with $\psi$ a $C^2$-smooth function whose Laplacian $\Delta \psi$ satisfies $m \le \Delta \psi \le  M$ for some $m, M > 0$, number rigidity,   equivalence of Palm measures of the same order and  quasi-invariance under the natural action of  the group $\Diff_c(\C)$ of compactly supported diffeomorphisms of $\C$  are proved in \cite{QB3} (see \S \ref{sec-sym} for the precise definition of  the group $\Diff_c(\C)$). 
\begin{question*}
Let $U\subset \C^d$ be a connected open subset such that $H^\infty(U)$ contains only constant functions. Does it follow that  the determinantal point process $\PP_{K_{U, \omega}}$ is number rigid? 
\end{question*}

\subsection{Outline of the proof of Theorem \ref{thm-palm-eq}}

Given  a positive integer $\ell \in \N$ and an $\ell$-tuple  $\mathfrak{p} = (p_1, \cdots, p_\ell) \in U^\ell$ of { distinct} points  in $U$, we denote by $\PP_{K_\omega}^{\mathfrak{p}}$ the { reduced Palm measure}  of $\PP_{K_\omega}$ corresponding to the positions $p_1, \cdots, p_\ell$ (see \S\ref{sec-example} below for the precise definition).   The number $\ell$ is called the order of $\PP_{K_\omega}^{\mathfrak{p}}$.  
 Let  $\ll$ denote the relation of absolute continuity.  Our aim is to prove the 
 relations $
\PP_{K_\omega}^{\mathfrak{p}} \ll \PP_{K_\omega}, \quad \PP_{K_\omega} \ll \PP_{K_\omega}^{\mathfrak{p}}. 
$

We prove the relation $\PP_{K_\omega}^{\mathfrak{p}} \ll \PP_{K_\omega}$ by showing in Lemma \ref{lem-del} that the measure $\PP_{K_\omega}$ is 
{\it deletion tolerant}: given a configuration and a bounded set, 
the event that the bounded set contains no particles  has positive conditional probability with respect to fixing the configuration in the outside of our bounded set (see Definition \ref{def-del}). Equivalently, see Holroyd and Soo \cite[Theorem 1.1]{HolSoo}, the probability law of a random configuration $\mathscr{X}$ is deletion tolerant if and only if  measurable removal of  a random finite subset from $\mathscr{X}$ yields a random configuration whose law is absolutely continuous with respect to the original law. A general  Proposition \ref{prop-del-ab}  shows that  deletion tolerance of $\PP_{K_\omega}$ implies  $\PP_{K_\omega}^{\mathfrak{p}} \ll \PP_{K_\omega}$.  

The deletion tolerance property for $\PP_{K_\omega}$ is established  using the module structure of our 
underlying Hilbert  space $A^2(U, \omega)$ over the algebra $H^\infty(U)$.  For any Borel subset $W\subset U$ and $\PP_{K_\omega}$-almost every configuration $\X$,  denote by $\PP_{K_\omega}(\cdot| \X, W)$ the conditional measure of $\PP_{K_\omega}$  with respect to the condition that the configuration on $W$ coincides with $\X|_W  : = \X \cap W$.  Denote by $\#_W$ the map which associates a configuration to its number of particles inside $W$. We need to show that for any relatively compact $B\subset U$ with positive Lebesgue measure, $\PP_{K_\omega}(\#_{B}=0|\X,B^{c})>0$ for $\PP_{K_\omega}$-almost every $\X$.
An explicit description of the conditional measure $\PP_{K_\omega}(\cdot| \X, B^c)$ was recently obtained in \cite{BQS16}: for $\PP_{K_\omega}$-almost every $\X$, the conditional measure $\PP_{K_\omega}(\cdot|\X, B^c)$ is  determinantal and is induced by an explicitly described trace class positive contraction $K_\omega^{[\X, B^c]}$, that is, 
\begin{align*}
\PP_{K_\omega}(\cdot| \X, B^c)= \PP_{K_\omega^{[\X, B^c]}}. 
\end{align*}
Since
\[
\PP_{K_\omega}(\#_{B}=0|\X,B^{c})= \det  (1 - \chi_B K_\omega^{[\X, B^c]} \chi_B), 
\]
we show in Lemma \ref{lem-strict-c} that our compact operator $K_\omega^{[\X, B^c]}$ is { strictly contractive} for $\PP_{K_\omega}$-almost every configuration $\X \in \Conf(U)$, in other words,  that $1$ is not an eigenvalue of $K_\omega^{[\X,B^{c}]}$. The key   Lemma \ref{lem-module} shows that the  space 
\begin{equation}\label{inv-space-kom}
 \{h\in L^{2}(B): K_\omega^{[\X, B^{c}]}(h)=h\}
 \end{equation}
 is an $H^\infty(U)$-module. If $H^\infty(U)$ contains a non-constant function, then the space (\ref{inv-space-kom}) is either zero or  has infinite dimension, the latter impossible by compactness of  $K_\omega^{[\X, B^c]}$.

The argument is concluded using a {\it monotone coupling} of $\PP_{K_\omega}^{\mathfrak{p}}$ and $\PP_{K_\omega}$, that is, a measure on the Cartesian square on the space of configurations supported on the set of pairs of configurations one of which contains the other, and with projection marginals $\PP_{K_\omega}^{\mathfrak{p}}$ and $\PP_{K_\omega}$ respectively (see \S  \ref{sec-mc}  for precise statements) .

The proof of the relation $\PP_{K_\omega} \ll \PP_{K_\omega}^{\mathfrak{p}}$ requires greater effort.  We start by  establishing,  
 in Lemma \ref{lem-num-ins},  the {\it number insertion tolerance} for our reduced Palm measure $\PP_{K_\omega}^{\mathfrak{p}}$. Number insertion tolerance, see Definition \ref{def-num-ins}, is dual to 
deletion tolerance and means that, given a configuration and a bounded set, 
the event that the bounded set contains at least one particle has positive conditional probability with respect to  fixing the configuration in the outside of our bounded set. The proof of number insertion tolerance uses both the module 
structure of our underlying Hilbert space and  the  {\it real analyticity} of our reproducing kernels and the well chosen conditional kernels $(K_\omega^{\mathfrak{p}})^{[\X, B^c]} (z, w)$. In particular, in  Proposition \ref{prop-anal}, we prove a general  result which states that any determinantal point process on a connected domain,  induced by a real analytic kernel which represents an infinite rank trace class positive contractive operator,  is indeed number insertion tolerant.

Number insertion tolerance is however too weak for our purposes, and we need the stronger notion of  {\it insertion tolerance}, see Definition \ref{def-ins}. The key additional property we need to prove is that our point process is  {\it diffusive} in that the conditional measure  in a bounded domain $B$, with respect to 
fixed exterior, is absolutely continuous with respect to the Lebesgue measure, or, more formally, the  compound Lebesgue measure on the disjoint union of Cartesian powers of our bounded domain $B$. The probability law of a random configuration $\mathscr{X}$ is insertion tolerant if and only if any measurable and {\it diffusive} addition of a  finite subset to  $\mathscr{X}$ yields a new random configuration whose law is absolutely continuous with respect to the original one.   
The key consideration here is again the real analyticity of our conditional kernels established in Lemma \ref{lem-real-anal}.  We emphasize that the proof of real analyticity of conditional kernels in Lemma \ref{lem-real-anal} uses the complex analyticity of the functions in our Bergman spaces, see Remark \ref{real-complex-rem}. 
In  Lemma  \ref{lem-nice} and Lemma \ref{lem-abs}, we use  real analyticity of the kernel   to derive the insertion tolerance from  the number insertion tolerance. 

The final step of our argument  is  that for any monotone coupling of $\PP_{K_\omega}^{\mathfrak{p}}$ and $\PP_{K_\omega}$, the difference random configuration is {\it conditionally diffusive}, see Lemma \ref{lem-re-f} for the precise statement.  The  insertion tolerance of the   reduced Palm measure $\PP_{K_\omega}^{\mathfrak{p}}$ then yields the desired relation $\PP_{K_\omega} \ll \PP_{K_\omega}^{\mathfrak{p}}$.

\begin{remark}
Our argument in fact yields a stronger claim.
Let $L \subset A^2(U,\omega)$ be a non-zero closed subspace which is an $H^\infty(U)$-sub-module of $ A^2(U,\omega)$. Let $\Pi_L$ be the operator of orthogonal projection from $L^2(U, \omega dV)$ onto $L$, then  $\Pi_L$ is locally trace class and induces a determinantal measure $\PP_{\Pi_L}$ on $U$. If $H^{\infty}(U)$  contains  a non-constant  function, then the   measure $\PP_{\Pi_L}$ is equivalent to 
its arbitrary  reduced Palm measure, of any order.
\end{remark}

\section{Preliminaries}\label{sec-example}

\subsection{Spaces of configurations and point processes}
Let $E$ be a locally compact Polish space and let $\mu$ be  a $\sigma$-finite  Radon measure on $E$ such that the support of $\mu$ is the whole space $E$.

A (locally finite) configuration on $E$ is a collection of points of $E$, possibly with multiplicities and considered without regard to order,  such that any relatively compact subset of $E$ contains only finitely many points.  Points in a configuration will also be called particles. A configuration is called simple if all its particles have multiplicity one.  In this paper, we use Gothic  letters such as $\X, \mathfrak{Y}, \mathfrak{Z}$  to denote configurations.  Let $\Conf(E)$ denote the space of all configurations on $E$: 
\[
\Conf(E): = \Big\{\X \subset E \Big|  \text{$\#( \X \cap B)< \infty$ for any relatively compact $B\subset E$} \Big\}.
\] 
Equivalently, a configuration $\X \in \Conf(E)$  is a purely atomic Radon measure
$$
m_\X : = \sum_{x \in \X} \delta_x,
$$
where $\delta_x$ is the Dirac mass on the point $x$. The space $\Conf(E)$ is thus a subset of the space $\mathfrak{M}(E)$ of Radon measures on $E$ and becomes a complete separable metric space with respect to the vague topology on $\mathfrak{M}(E)$. We equip  $\Conf(E)$ with its Borel sigma-algebra. Note that  the Borel sigma-algebra on $\Conf(E)$ is the smallest sigma-algebra on $\Conf(E)$ making all the mappings $\X \mapsto \#(\X \cap B)$ measurable, with $B$ ranging over relatively compact Borel subsets of $E$. For further background on the general theory of point processes, see  Daley and Vere-Jones \cite{DV-1}, Kallenberg \cite{Kallenberg}.

 A Borel probability measure on $\Conf(E)$ is called a {\it point process} on $E$. A point process on $E$ is called simple if it  is supported  on the set of simple configurations on $E$. 
 A measurable map 
 $
 \mathscr{X}: \Omega \rightarrow \Conf(E)
 $
 defined on a probability space $(\Omega, \mathcal{B}, \bold{P})$ is called  a {\it random configuration} on $E$.
 Two random configurations $\mathscr{X}, \mathscr{Y}$ on $E$ defined on a common probability space are called {\it coupled random configurations}. 
In particular, it is convenient for us to distinguish the random configuration and the law of our random configuration, the  point process. 

 \subsection{Reduced Palm measures of point processes}

For a simple point process $\PP$ on $E$, recall that it is said to admit {\it $k$-th correlation measure} $\rho_k$ on $E^k$ if for any continuous compactly supported function $\varphi: E^k \rightarrow \C$ we have 
\begin{align*}
\int\limits_{\Conf(E)} \sum_{x_1, \dots, x_k \in \X}^*  \varphi(x_1, \dots, x_k) \PP(d \X) =  \int\limits_{E^k} \varphi(q_1, \dots, q_k) d \rho_k (q_1, \dots, q_k),
\end{align*}
where $\sum\limits^{*}$ denotes the sum over all ordered $k$-tuples of {\it distinct} points $(x_1, \dots, x_k) \in \X^k$.

For  a simple point process $\PP$ on $E$ admitting $k$-th correlation measure $\rho_k$ on $E^k$,  one can define, for $\rho_k$-almost every $\mathfrak{p} = (p_1, \dots, p_k) \in E^k$ of distinct points in $E$,  a point process  on $E$,  denoted by $\PP^{\mathfrak{p}}$ and called the {\it $k$-th order reduced Palm measure} of $\PP$ corresponding to the positions $p_1, \cdots, p_k$, by the following disintegration formula: for any non-negative Borel function $u: \Conf(E) \times E^k\rightarrow \R $, 
\begin{align*}
\int\limits_{\Conf(E)}  \sum_{p_1, \dots, p_k \in \X}^{*} u(\X; \mathfrak{p}) \PP(d \X)  =    \int\limits_{E^k} \rho_k(d\mathfrak{p}) \!\int\limits_{\Conf(E)} \!  u (\X \cup \{p_1, \dots, p_k\};  \mathfrak{p})  \PP^{\mathfrak{p}}(d\X).
\end{align*}
Informally,  $\PP^{\mathfrak{p}}$ is the conditional distribution of $\mathscr{X} \setminus \{p_1, \dots, p_k\}$ on $\Conf(E)$ conditioned to the event that  the configuration $\mathscr{X}$ has a particle at the positions $p_1, \dots, p_k $, provided that $\mathscr{X}$ has distribution $\PP$.  For more details on reduced Palm measures of point processes, see Section 12.3 in Kallenberg \cite{Kallenberg} 
or Section 2 in \cite{BQS}.

\subsection{Conditional measures of point processes}

  Let $\PP$ be a point process on $E$.  Let $W \subset E$ be a Borel subset. Consider the restriction mapping $\pi_W: \X \to \X|_W$ from
$\Conf(E)$ to $\Conf(W)$. For $\PP$-almost every configuration $\X$, the conditional measure $\PP(\cdot| \X|_W)$ of $\PP$ with respect to the condition that the restriction of the configuration onto $W$ coincides with $\X|_W$  is the conditional measure, in the sense of Rohlin \cite{Roh-meas},  supported on the  fibre
\begin{align}\label{fiber-form}
\{\mathfrak{Y}\in \Conf(E):  \mathfrak{Y}|_W = \X|_W\}. 
\end{align}
These conditional measures are characterized by the disintegration formula
 \begin{align}\label{f-dis-1}
\PP = \int\limits_{\Conf(W)}  \PP(\cdot| \X|_W)   (\pi_{W})_{*}\PP( d\X|_W) =  \int\limits_{\Conf(E)}  \PP(\cdot| \X|_W)   \PP( d\X).
\end{align}

It is however more convenient for us to consider our conditional measures as measures on the space $\Conf(E\setminus W)$, and we  set
\[
\PP(\cdot| \X, W) : = (\pi_{E\setminus W})_{*}\big[ \PP(\cdot | \X|_W)\big].
\]
With a slight abuse of terminology, the measures $\PP(\cdot| \X, W)$ will still be called  conditional measures of $\PP$. 
More precisely,  set $\Conf_E(W)=\pi_W(\Conf(E)), \Conf_E(E\setminus W) = \pi_{E\setminus W}(\Conf(E))$ and  write
\begin{equation}\label{prod-ident}
\Conf(E)  \xrightarrow{\quad \simeq\quad } \Conf_E(E\setminus W) \times \Conf_E(W),
\end{equation}
identifying a configuration $\X \in \Conf(E)$ and the pair $(\X|_{E\setminus W}, \X|_{W})$.  The fibre  \eqref{fiber-form} of the map $\pi_W$ is then  identified with the set
\[
\Conf_E(E\setminus W) \times \{\X|_W\}, 
\]
where $\{\X|_W\}$ is a singleton. The conditional measure $\PP(\cdot| \X|_W)$  is identified under (\ref{prod-ident}) with the probability measure 
\[
\PP(\cdot | \X, W) \otimes \delta_{\X|_W}. 
\] 
The disintegration formula (\ref{f-dis-1}) can now be written in the form
\begin{align}\label{f-dis-int}
\PP =  \int\limits_{\Conf(W)}  \Big(\PP(\cdot| \mathfrak{Z}, W) \otimes \delta_{\mathfrak{Z}}\Big) [\PP]_W(d \mathfrak{Z}) = \int\limits_{\Conf(E)}  \Big(\PP(\cdot| \X, W) \otimes \delta_{\X|_W}\Big) \PP(d \X).
\end{align}
where $ [\PP]_W: = (\pi_W)_{*}(\PP)$ is a probability measure supported on $\Conf_E(W) \subset \Conf(W)$ and thus can be considered as a probability measure on $\Conf(W)$.

\subsection{Determinantal point processes}\label{sec-DPP}

  Let $K$ be a {\it locally trace class positive contractive} operator on the complex Hilbert space $L^2(E,\mu)$. The local trace class assumption implies that $K$ is an integral operator and by slightly abusing the notation, we denote the kernel of the operator $K$ again by $K(x, y)$.  By  a theorem obtained by Macchi  \cite{Macchi-DP} and  Soshnikov \cite{Soshnikov-DP}, as well as by Shirai and Takahashi  \cite{ST-DPP}, the kernel $K$ induces a unique simple point process $\PP_K$ on $E$ such that for any positive integer $l\in \N$, the $l$-th correlation measure of $\PP_K$ exists and is given by 
\[
\rho_l  = \det(K(x_i, x_j))_{1 \le i, j \le l}  \cdot \mu(dx_1) \cdots \mu(dx_l) .
\]
The point process $\PP_{K}$ is called  the determinantal point process (or determinantal measure) induced by the kernel $K$. 

Recall that the determinantal point process $\PP_K$ is also characterized by the formula
\[
\E_{\PP_K}\Big(\prod_{x \in \mathfrak{X}} g (x) \Big) = \det( 1 + (g -1) K\chi_{B}),
\]
where $g$ is any bounded Borel function $g: E\rightarrow \C$ such that $g-1$ is supported on a relatively compact subset $B\subset E$. Here $\det$ stands for the Fredholm determinant. In particular, by taking $g =1 -  \chi_{B}$, we obtain the identity for the gap probability 
\begin{align}\label{id-gap}
\PP_K(\{\X \in \Conf(E): \X\cap B = \emptyset\}) = \det( 1 -\chi_B K\chi_{B}). 
\end{align}

\subsection{Palm measures of determinantal point processes}
For any $\ell \in \N$ and any $\ell$-tuple  $\mathfrak{p} = (p_1, \cdots, p_\ell) \in E^\ell$ of {\it distinct} points  in $E$, by the Shirai-Takahashi Theorem \cite[Theorem 6.5, Corollary 6.6]{ST-palm}, the reduced Palm measure $\PP_{K}^\mathfrak{p}$ is again a determinantal point process whose kernel is given explicitly:  setting 
\[
p_0= z, q_0 = w, q_i = p_i \ \mathrm{ for} \  1\le i \le \ell
\]
and denoting
 \begin{align}\label{def-Palm-kernel}
 K^{\mathfrak{p}}(z, w) : = \displaystyle \frac{\Det[K(p_i, q_j)]_{0 \le i, j \le \ell}
}{
 \Det[K(p_i, p_j)]_{1\le i, j \le \ell}
}, 
\end{align}
by  Shirai-Takahashi \cite[Corollary 6.6]{ST-palm}, we have 
\[
\PP_{K}^\mathfrak{p} = \PP_{K^\mathfrak{p}}.
\]

Recalling that our weighted Bergman kernel $K_\omega$ corresponds to the orthogonal projection from $L^2(U,\omega dV)$ onto $A^2(U, \omega)$, we see that the kernel $K_\omega^\mathfrak{p}$ defined by the formula \eqref{def-Palm-kernel}  corresponds to the orthogonal projection from $L^2(U, \omega dV)$ onto 
\begin{align}\label{berg-palm-sp}
A^2(U, \omega; \mathfrak{p}) :  = \{\varphi\in A^2(U, \omega)| \varphi(p_1) = \cdots = \varphi(p_\ell) = 0\}.
\end{align}

\subsection{Quasi-symmetries}\label{sec-sym}

 Recall that a $C^1$-diffeomorphism $F: U\rightarrow U$ is called {\it compactly supported}, if  the set $\{ z \in U: F (z) \ne z\}$ is relatively compact in $U$. Let $\Diff_c(U)$ denote the group of compactly supported diffeomorphisms of $U$. Consider the natural action of  $\Diff_c(U)$  on $\Conf(U)$ defined by the map $\Diff_c(U) \times \Conf(U) \rightarrow \Conf(U)$:  
\[
(F, \X) \mapsto F(\mathfrak{X}) : = \{ F(z): z \in \X \}.
\]

\begin{corollary}\label{quasi-inv}
The determinantal point process $\PP_{K_\omega}$ is $\Diff_c(U)$- quasi-invariant.  More precisely, $F_*(\PP_{K_\omega})$ is equivalent to  $\PP_{K_\omega}$ for all $F\in \Diff_c(U)$,   where $F_*(\PP_{K_\omega})$ is the image measure of $\PP_{K_\omega}$ under the map $F: \Conf(U)\rightarrow \Conf(U)$. 
\end{corollary}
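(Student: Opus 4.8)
The plan is to deduce Corollary~\ref{quasi-inv} not from the statement of Theorem~\ref{thm-palm-eq} but from the machinery developed for its proof: the explicit determinantal description of the conditional measures from \cite{BQS16}, their strict contractivity (Lemma~\ref{lem-strict-c}), and the real analyticity of the conditional kernels (Lemma~\ref{lem-real-anal}). Fix $F \in \Diff_c(U)$. Since $U$ is open and connected, one can choose a connected, relatively compact open set $B \subset U$ with $V(\partial B) = 0$ whose closure contains the compact set $\overline{\{z \in U : F(z) \ne z\}}$. Then $F$ equals the identity on $U \setminus B$; moreover, by injectivity of $F$ the set $\{F \ne \mathrm{id}\}$ is $F$-invariant, so $F(B) = B$ and $F$ restricts to a $C^1$-diffeomorphism of $B$ onto itself. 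Consequently the map $F \colon \Conf(U) \to \Conf(U)$ preserves the restriction of every configuration to $B^c := U \setminus B$ as well as the number of particles inside $B$. Using the disintegration \eqref{f-dis-int} with $W = B^c$, the claim $F_*(\PP_{K_\omega}) \sim \PP_{K_\omega}$ reduces to showing that for $\PP_{K_\omega}$-almost every $\X$ the conditional measure $\PP_{K_\omega}(\,\cdot\,|\X, B^c)$ on $\Conf(B)$ is equivalent to its pushforward under $F$.

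Fix such an $\X$ and set $\K := \chi_B K_\omega^{[\X,B^c]}\chi_B$, so that $\PP_{K_\omega}(\,\cdot\,|\X,B^c) = \PP_{\K}$ by \cite{BQS16}. Here $\K$ is a trace class positive contraction, and by Lemma~\ref{lem-strict-c} together with compactness of $\K$ one has $\|\K\| < 1$; hence $L := \K(1-\K)^{-1}$ is a well-defined trace class positive operator with $\det(1-\K) > 0$. Decompose $\PP_{\K}$ according to the number $\#_B$ of particles, $\PP_{\K} = \sum_{n \ge 0} \PP_{\K}(\#_B = n)\,\PP_{\K}^{(n)}$, where $\PP_{\K}^{(n)}$ denotes the law of the $n$ particles given $\#_B = n$. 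By the standard formula for the Janossy densities of a determinantal measure attached to a contraction kernel (see \cite{ST-DPP, ST-palm}), $\PP_{\K}^{(n)}$, regarded as a symmetric probability measure on $B^n$, is absolutely continuous with respect to $(\omega\,dV)^{\otimes n}$ with density proportional to $(x_1, \dots, x_n) \mapsto \det\big(L(x_i,x_j)\big)_{1 \le i, j \le n}$, which is nonnegative since $L \ge 0$.

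Now one invokes real analyticity. The kernel $K_\omega^{[\X,B^c]}(\cdot,\cdot)$ is real analytic on $B \times B$ by Lemma~\ref{lem-real-anal} (compare also Lemma~\ref{lem-nice}); since $\|\K\| < 1$, the kernel of $L = \sum_{k \ge 1}\K^k$ is then real analytic on $B \times B$, the series together with its termwise derivatives converging locally uniformly. Therefore $(x_1,\dots,x_n) \mapsto \det\big(L(x_i,x_j)\big)$ is real analytic on the connected open set $B^n$, and for each $n$ with $\PP_{\K}(\#_B = n) > 0$ it is not identically zero; hence its zero set is Lebesgue-null, and, since $\omega > 0$ almost everywhere, $\PP_{\K}^{(n)}$ is equivalent to $V^{\otimes n}$ on $B^n$. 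Since $F$ restricts to a $C^1$-diffeomorphism of $B$, it maps Lebesgue-null subsets of $B^n$ to Lebesgue-null subsets and pushes $V^{\otimes n}$ forward to an equivalent measure (its Jacobian density $\prod_{i=1}^n |\det DF^{-1}(x_i)|$ being continuous and strictly positive). Hence $F_*\PP_{\K}^{(n)} \sim \PP_{\K}^{(n)}$ for every such $n$, and, because $F$ preserves $\#_B$, $F_*\PP_{\K} \sim \PP_{\K}$. Substituting this back into \eqref{f-dis-int} gives $F_*(\PP_{K_\omega}) \sim \PP_{K_\omega}$.

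The substantive ingredient is the real analyticity of the conditional kernel $K_\omega^{[\X,B^c]}$ — already supplied by Lemma~\ref{lem-real-anal} — and the consequent positivity almost everywhere of the Janossy densities; given this, the corollary is essentially bookkeeping built on the disintegration \eqref{f-dis-int} and the Janossy-density formalism. The one place deserving a little care is the passage of real analyticity from $\K$ to $L = \K(1-\K)^{-1}$ via the Neumann series (for which the strict contractivity $\|\K\| < 1$ coming from Lemma~\ref{lem-strict-c} is exactly what is needed), together with the elementary geometric fact that a connected relatively compact open $B$ containing $\overline{\{F \ne \mathrm{id}\}}$ exists inside the connected open set $U$.
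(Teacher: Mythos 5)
Your argument is correct in substance but takes a genuinely different route from the paper's. The paper's own proof is two lines: Theorem~\ref{thm-palm-eq} implies in particular that all reduced Palm measures of $\PP_{K_\omega}$ \emph{of the same order} are equivalent, and the $\Diff_c(U)$-quasi-invariance then follows by citing \cite[Proposition~2.19]{BQS} as a black box. You instead give a direct, self-contained proof: localize $F$ inside a connected relatively compact open $B$ with $F(B)=B$, disintegrate $\PP_{K_\omega}$ over $\Conf(B^c)$ via \eqref{f-dis-int}, and show conditionally that $F_*\PP_{\K}\sim\PP_{\K}$ by passing to the $L$-ensemble/Janossy-density description of $\PP_{\K}$. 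The ingredients — strict contractivity of $\K$ from Lemma~\ref{lem-strict-c} and real analyticity of the conditional kernel from Lemma~\ref{lem-real-anal} — are the ones the paper develops for Theorem~\ref{thm-palm-eq}, but you apply them directly without the Palm-measure intermediary, and you effectively re-derive the diffusivity statement $\PP_{\K}(\,\cdot\,|\#_B=n)\simeq\pi^{(n)}_*((dV|_B)^{\otimes n})$ of Lemma~\ref{lem-abs} through Janossy densities rather than through the Hough--Krishnapur--Peres--Vir{\'a}g Bernoulli decomposition used there. The paper's route is shorter; yours avoids the external reference \cite{BQS} and makes transparent precisely which local analytic data (conditional kernel is a trace class strict contraction with real-analytic kernel on the support of the perturbation) produce quasi-invariance.

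Two repairs are needed. First, for $F$ to act as the identity on $U\setminus B$ you must require that the open set $B$ itself, not merely $\overline{B}$, contains the compact set $\overline{\{F\ne\mathrm{id}\}}$; with that choice, the $F$-invariance of $\{F\ne\mathrm{id}\}$ does give $F(B)=B$ as you observe. Second, and more substantively, your justification for real analyticity of the kernel of $L=\K(1-\K)^{-1}$ — ``the series together with its termwise derivatives converging locally uniformly'' — is not valid: locally uniform convergence in $C^\infty$ does not preserve real analyticity, which is exactly the caveat the paper flags in Remark~\ref{real-complex-rem}. The correct argument goes through holomorphicity, as in the proof of Lemma~\ref{lem-real-anal}: writing $\K(z,w)=\sum_k\lambda_k\varphi_k(z)\overline{\varphi_k(w)}$ with $\varphi_k$ holomorphic on $B$, one has
\[
L(z,w)=\sum_k\frac{\lambda_k}{1-\lambda_k}\,\varphi_k(z)\overline{\varphi_k(w)},\qquad \sum_k\frac{\lambda_k}{1-\lambda_k}\le \frac{\tr\K}{1-\|\K\|}<\infty,
\]
and Lemma~\ref{lem-krantz} gives locally uniform convergence of $L(z,\bar w)$ on $B\times B$, which is therefore holomorphic in $(z,w)$; hence $L(z,w)$ is real analytic. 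With these two fixes your proof is complete.
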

\begin{proof}
Theorem \ref{thm-palm-eq}  implies in particular that all the reduced Palm measures of  $\PP_{K_\omega}$ { of the same order} are equivalent, and the desired  $\Diff_c(U)$-quasi-invariance of $\PP_{K_\omega}$ follows by  \cite[Proposition 2.19]{BQS}. 
\end{proof}

\subsection{Conditional measures of determinantal point processes}\label{sec-cm}

\subsubsection{Conditional kernels}\label{sec-cond-rep}
We shall need  the explicit description of the conditional measure $\PP_K(\cdot| \X, B^c)$  obtained in \cite{BQS16}. While the results in \cite{BQS16} are obtained for arbitrary locally trace class positive contractions, for the purposes of this paper we only need the case when the locally trace class positive contraction $K$ is an orthogonal projection, or, in other words, $K$ is the reproducing kernel of a reproducing kernel Hilbert space  $H \subset L^2(E, \mu)$ which is assumed to be locally trace class.

For a simple configuration $\X\in \Conf(E)$ and a Borel subset $W\subset E$,  set
\[
H(\X):=\{ \varphi \in H: \varphi|_\X = 0 \}; \ 
 \chi_{W}H(\X):=\{ \chi_{W}\cdot \varphi: \varphi \in H(\X)\}.
\]

Given  a  relatively compact Borel  subset $B\subset E$,   let $E_n\subset B^c$ be Borel subsets,  relatively compact in $E$, such that 
\[
E_1\subset E_2 \subset \cdots \subset E_n  \subset \cdots \hbox{ and }  \bigcup_{n}E_n= B^c.
\]
We also denote $F_n=E \setminus(B\cup E_n)$.

For $\PP_{K}$-almost every $\mathfrak{X}\in \Conf(E)$, denote by $K^{[\X, E_n]}$ the orthogonal  projection, from $L^2(E, \mu)$ onto  subspace 
  \[
\overline{\chi_{B\cup F_n}H(\mathfrak{X}\cap E_n)}^{L^2(E, \mu)},
  \]
  the closure of the linear subspace $\chi_{B\cup F_n}H(\mathfrak{X}\cap E_n)$. 
By \cite[Theorem 1.4]{BQS16}, for $\PP_{K}$-almost every configuration $\X\in \Conf(E)$, the conditional measure $\PP_K(\cdot| \X,B^{c})$ is again determinantal; 
 the operators $K^{[\X, E_n]}$ are locally trace class and there exists a  trace class positive contraction $K^{[\X, B^{c}]}$ on $L^{2}(E, \mu)$, such that
  \begin{align} \label{operator-con}
  \chi_{B}K^{[\X, E_n]}\chi_{B} \xrightarrow[\hbox{in trace class norm}]{n\to \infty}K^{[\X, B^{c}]}
  \end{align}
and the limiting operator $K^{[\X,B^{c}]}$  induces the determinantal measure $\PP_K(\cdot| \X,B^{c})$, that is, 
  \begin{align}\label{cond-m-f}
  \PP_K(\cdot| \X,B^{c}) = \PP_{K^{[\X,B^{c}]}}.
  \end{align}

\begin{remark}
The convergence \eqref{operator-con} implies in particular $K^{[\X, B^c]} = \chi_B K^{[\X, B^c]} \chi_B$. Therefore, in what follows, we also consider $K^{[\X, B^c]}$ as an operator acting on the subspace $L^2(B, \mu) \subset L^2(E, \mu)$. 
\end{remark}

\subsubsection{Trace class kernel case}\label{sec-trace}

If $K$ is a trace class positive contraction  on $L^2(E, \mu)$, then  the determinantal point process $\PP_K$ is supported  on the set of finite configurations: 
$
\PP_K(\#_E <\infty) =1. 
$
The formulas for conditional measures  are then particularly simple  \cite[Proposition 2.5]{BQS16}: for any Borel subset $W\subset E$ and $\PP_K$-almost every configuration $\X\in \Conf(E)$,  the conditional measure $\PP_K(\cdot | \X, W^c)$ is a determinantal point process and is induced by the kernel 
\begin{align*}
K^{[\X, W^c]}:  &= \chi_{W} K^{\X \cap W^c} (1 - \chi_{W^c} K^{\X \cap W^c})^{-1} \chi_W  =  \chi_{W}  \sum_{n=0}^\infty K^{\X \cap W^c}  (\chi_{W^c} K^{\X \cap W^c}  )^{n}\chi_{W} , 
\end{align*}
where $K^{\X \cap W^c} = K^{p_1, \cdots, p_\ell}$ is given by the formula \eqref{def-Palm-kernel} for $\X \cap W^c  = \{p_1, \cdots, p_\ell\}$. 

  Note that while in \cite[Proposition 2.5]{BQS16}, we  only treated the conditional measures with respect to fixing the configuration on   relatively compact subsets,  the same proof applies to the case when the point process is supported on the set of finite configurations.

\subsection{Monotone coupling}\label{sec-mc}

By Lyons \cite[Theorem 3.8]{Lyons-ICM}, the elementary operator-order inequality $K_\omega^{\mathfrak{p}} \le K_\omega$ implies that the reduced Palm measure $\PP_{K_\omega}^{\mathfrak{p}}  = \PP_{K_\omega^{\mathfrak{p}}}$ is {\it stochastically dominated} by $\PP_{K_\omega}$, that is, for any bounded  Borel function $f:  \Conf(U) \rightarrow \R$  such that $f(\X) \le f(\mathfrak{Y})$ whenever $\X \subset \mathfrak{Y}$, the following inequality holds
\[
\int\limits_{\Conf(U)} f d \PP_{K_\omega}^{\mathfrak{p}} \le \int\limits_{\Conf(U)} f d \PP_{K_\omega}. 
\]
 Therefore,  by Strassen's theorem \cite{Strassen-coupling} (see  Lindvall \cite{Lindvall} for a simple proof), there  exists a  monotone coupling of $\PP_{K_\omega}^{\mathfrak{p}}$ and $\PP_{K_\omega}$ in the following sense:  there exists a Borel probability measure $\nu$ on $\Conf(U) \times \Conf(U)$ whose coordinate projections $\nu_1, \nu_2$ are $\PP_{K_\omega}$ and $\PP_{K_\omega}^{\mathfrak{p}}$ respectively, such that 
\begin{align*}
\nu\Big(\Big\{(\X,  \mathfrak{Y})\in \Conf(U)\times \Conf(U)| \mathfrak{Y}\subset \X\Big\} \Big)  =1.
\end{align*}
Equivalently, there exist two coupled random configurations $\mathscr{X}$ and $\mathscr{Y}$ on $U$ defined on  a common probability space $(\Omega, \mathcal{B}, \bold{P})$,   such that 
\begin{align*}
\mathcal{L}(\mathscr{X}) = \PP_{K_\omega}, \, \mathcal{L}(\mathscr{Y}) = \PP_{K_\omega}^{\mathfrak{p}}\an \mathscr{Y}\subset \mathscr{X} \quad \text{$\bold{P}$-almost surely,}
\end{align*}
where  $\mathcal{L} ( \cdot )$ denotes the probability law of the corresponding random configuration. 

The monotone coupling between  $\PP_{\omega}^{\mathfrak{p}}$ and $\PP_{K_\omega}$  is not given explicitly (see Lyons \cite[Question 2.8]{Lyons-ICM}).
 In Lemma \ref{lem-re-f} below,  we  show that for any monotone coupling of $\PP_{K_\omega}^{\mathfrak{p}}$ and $\PP_{K_\omega}$ described as above, the conditional measure 
$$
\mathcal{L}(\mathscr{X}\setminus \mathscr{Y}| \mathscr{Y} = \mathfrak{Y})
$$
is diffusive, in the sense of Definition \ref{def-nice},   for $\PP_{K_\omega}^{\mathfrak{p}}$-almost every $\mathfrak{Y}$.

\section{Proof of the relation $\PP_{K_\omega}^{\mathfrak{p}} \ll \PP_{K_\omega}$}\label{sec-del}
\subsection{Deletion tolerance}\label{sec-del-tl}

Fix any $\ell \in \N$ and any $\ell$-tuple  $\mathfrak{p} = (p_1, \cdots, p_\ell) \in U^\ell$ of  distinct points  in $U$. We shall prove the relation $\PP_{K_\omega}^{\mathfrak{p}} \ll \PP_{K_\omega}$ by establishing, for our point
process $\PP_{K_\omega}$, the property of deletion tolerance whose definition we now recall.

\begin{definition}[Holroyd and Soo \cite{HolSoo}]\label{def-del}
Let $E$ be a locally compact Polish space. A point process $\PP$ on $E$ is called {\bf deletion tolerant},  if  for any relatively compact  Borel subset  $B \subset E$ and $\PP$-almost every configuration $\X \in \Conf(E)$, we have 
$
\PP(\#_B = 0|\X, B^c)> 0. 
$
\end{definition}

\begin{lemma}\label{lem-del}
The determinantal point process $\PP_{K_\omega}$ is deletion tolerant.
\end{lemma}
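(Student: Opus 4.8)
The plan is to show that for any relatively compact Borel set $B \subset U$ with $V(B) > 0$, the operator $\chi_B K_\omega^{[\X, B^c]} \chi_B$ is strictly contractive for $\PP_{K_\omega}$-almost every $\X$, so that the gap probability $\PP_{K_\omega}(\#_B = 0 \mid \X, B^c) = \det(1 - \chi_B K_\omega^{[\X, B^c]} \chi_B) > 0$; for $B$ with $V(B) = 0$ the claim is trivial since such $B$ carry no particles almost surely. Using the description from \S\ref{sec-cond-rep}, $K_\omega^{[\X, B^c]}$ is a trace class positive contraction on $L^2(B)$, hence compact, so if it failed to be strictly contractive, the eigenspace $\mathcal{V}(\X, B) := \{h \in L^2(B) : K_\omega^{[\X, B^c]} h = h\}$ would be a \emph{finite-dimensional} nonzero subspace.

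The heart of the argument is the key Lemma~\ref{lem-module}: the eigenspace $\mathcal{V}(\X, B)$ is an $H^\infty(U)$-module, i.e.\ invariant under multiplication by bounded holomorphic functions. I would establish this by first identifying $\mathcal{V}(\X, B)$ concretely. Since each approximating projection $K^{[\X, E_n]}$ projects onto $\overline{\chi_{B \cup F_n} H(\X \cap E_n)}$ where $H = A^2(U,\omega)$, and $\chi_B K^{[\X, E_n]} \chi_B \to K_\omega^{[\X, B^c]}$ in trace norm, a vector $h \in L^2(B)$ fixed by $K_\omega^{[\X,B^c]}$ must be (the restriction to $B$ of) a limit of elements of $H$ vanishing on the exterior part $\X \cap E_n$ of the configuration; passing to the limit, $h = \chi_B g$ for some $g \in H$ with $g$ vanishing on $\X \cap B^c = \X \setminus (\X \cap B)$, and moreover $\|\chi_B g\| = \|g\|$ forces $g$ to be supported on $B$ — but a holomorphic function on the connected domain $U$ supported on a set of finite measure must vanish identically unless... here one uses that the fixed vectors correspond to elements of $H(\X \cap B^c)$ whose $L^2$-mass concentrates on $B$. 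If $m \in H^\infty(U)$ and $g$ is such an element, then $mg \in A^2(U,\omega)$ (module structure), $mg$ still vanishes on $\X \cap B^c$, and $mg$ retains the concentration property since multiplication by $m$ commutes with $\chi_B$; thus $\chi_B(mg) \in \mathcal{V}(\X, B)$.

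Granting the module structure, I finish as follows. Suppose $\mathcal{V}(\X, B) \ne \{0\}$ and pick a nonzero $h = \chi_B g \in \mathcal{V}(\X, B)$ with $g \in A^2(U,\omega)$, $g \not\equiv 0$. By hypothesis $U$ admits a non-constant bounded holomorphic function $\varphi \in H^\infty(U)$. Then $\chi_B(\varphi^k g) \in \mathcal{V}(\X, B)$ for all $k \ge 0$. I claim these are linearly independent in $L^2(B)$: a nontrivial relation $\sum_{k=0}^{N} c_k \chi_B(\varphi^k g) = 0$ means the holomorphic function $\big(\sum_k c_k \varphi^k\big) g$ vanishes a.e.\ on $B$, hence vanishes identically on the connected domain $U$ by the identity principle; since $g \not\equiv 0$, the polynomial $p(\varphi) = \sum_k c_k \varphi^k$ vanishes on $U$, forcing $\varphi$ to take only finitely many values, contradicting that $\varphi$ is non-constant and holomorphic on a connected open set. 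Hence $\mathcal{V}(\X, B)$ is infinite-dimensional, contradicting compactness of $K_\omega^{[\X, B^c]}$. Therefore $\mathcal{V}(\X, B) = \{0\}$, the operator is strictly contractive, the gap probability is positive, and $\PP_{K_\omega}$ is deletion tolerant.

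The main obstacle I anticipate is making rigorous the passage from the trace-norm limit \eqref{operator-con} to the concrete description of $\mathcal{V}(\X, B)$ as restrictions to $B$ of $A^2(U,\omega)$-functions vanishing on $\X \cap B^c$ — in particular, justifying that a fixed vector of the limit operator genuinely ``lifts'' to a holomorphic function (rather than merely an $L^2$ limit whose holomorphicity is unclear), and that the norm-equality $\|\chi_B g\| = \|g\|$ (equivalently, the statement that the relevant $A^2$-function is concentrated on $B$) is the correct characterization. This is precisely the content that Lemma~\ref{lem-strict-c} and Lemma~\ref{lem-module} must supply, and it relies essentially on the reproducing kernel structure and the complex-analytic nature of $A^2(U,\omega)$; once that identification is in hand, the module property and the dimension count are straightforward.
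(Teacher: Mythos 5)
Your overall architecture matches the paper's: reduce deletion tolerance to strict contractivity of the compact positive contraction $K_\omega^{[\X,B^c]}$, show the fixed-point subspace $V_1(K_\omega^{[\X,B^c]})$ is an $H^\infty(U)$-module, and derive a contradiction with compactness by producing infinitely many linearly independent elements $\chi_B(\varphi^k g)$, $k\ge 0$, exactly as in the paper's Lemma~\ref{lem-strict-c} and Corollary~\ref{cor-inf}. The dimension count at the end is fine.

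The gap is precisely the step you flag yourself, and it is a genuine one: you assert that a fixed vector $h\in L^2(B)$ of $K_\omega^{[\X,B^c]}$ is of the form $h=\chi_B g$ for a single $g\in A^2(U,\omega)$ vanishing on $\X\cap B^c$ and with $\|\chi_B g\|=\|g\|$. No such lift is available. Indeed, if one had such a $g$ with all its $L^2$-mass on $B$, then $\chi_{U\setminus B}g=0$, and since $U$ is connected with $V(U\setminus B)>0$, the unique extension property forces $g\equiv 0$; your own reasoning would then yield $V_1=\{0\}$ immediately, with no module structure needed at all --- a sign that the intermediate claim is too strong. The actual situation is that $h$ is only a norm limit of vectors $\psi_n=\chi_{B\cup F_n}\phi_n$ with $\phi_n\in H(\X\cap E_n)$, and there is no reason for $(\phi_n)$ to converge in $A^2(U,\omega)$, let alone for the limit to be holomorphic. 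The paper's Lemma~\ref{lem-module} is designed precisely to avoid constructing a lift: one uses the two geometric observations (Lemmas~\ref{1-geo} and \ref{2-geo}) to pass from $h=\lim_n \chi_B K^{[\X,E_n]}\chi_B h$ to $h=\lim_n K^{[\X,E_n]}h=\lim_n\psi_n$, multiplies each $\psi_n$ by $a\in H^\infty(U)$ to get $\psi_n a=\chi_{B\cup F_n}(\phi_n a)$ with $\phi_n a\in H(\X\cap E_n)$ (using only the $H^\infty(U)$-module structure of $A^2(U,\omega)$), and then applies the second geometric observation in reverse to conclude $ha=\lim_n K^{[\X,E_n]}(ha)$, whence $ha\in V_1$. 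This works entirely at the level of the approximating sequence and never produces, or needs, a holomorphic function $g$ with $h=\chi_B g$.

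One further remark: the parenthetical ``a holomorphic function on the connected domain $U$ supported on a set of finite measure must vanish identically unless\ldots'' trails off and, in the direction you seem to intend it, does not help --- as noted above, it would collapse your argument to triviality. The correct ingredient of this flavour in the paper is Lemma~\ref{lem-ext}, which applies the unique extension argument to the \emph{unconditional} kernel $K_\omega$ to verify Assumption~\ref{ass-K} (strict contractivity of $\chi_B K_\omega$), a hypothesis that Lemma~\ref{lem-module} then uses to guarantee that $\chi_{B\cup F_n}H(\X\cap E_n)$ is closed, so the approximation argument is well posed.
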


The following proposition shows that for a deletion tolerant determinantal measure, all its reduced Palm measures are indeed absolutely continuous with respect to the original determinantal measure. 

\begin{proposition}\label{prop-del-ab}
Let $\PP_K$ be a determinantal point process on a Polish space $E$ induced by a self-adjoint kernel $K$. If $\PP_K$ is deletion tolerant, then any reduced Palm measure of $\PP_K$ is absolutely continuous with respect to $\PP_K$. 
\end{proposition}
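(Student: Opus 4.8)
The plan is to reduce the statement about reduced Palm measures to the disintegration formulas for conditional measures, and then invoke the deletion-tolerance hypothesis. First I would reduce to the case of a first-order Palm measure: by iterating the disintegration formula for reduced Palm measures (the displayed formula in \S\ref{sec-example} relating $\PP$, $\rho_k$ and $\PP^{\mathfrak p}$), the $k$-th order reduced Palm measure $\PP_K^{\mathfrak p}$ at $\mathfrak p=(p_1,\dots,p_k)$ can be obtained by taking successive first-order Palm measures, so it suffices to handle $k=1$, i.e.\ to show $\PP_K^{p}\ll\PP_K$ for $\rho_1$-almost every $p\in E$. (One should be slightly careful that the iterated Palm construction is consistent $\rho_k$-a.e., but this is standard, cf.\ Kallenberg \cite{Kallenberg}.)

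Next I would fix a point $p\in E$ and a small relatively compact neighbourhood $B$ of $p$, and compare three measures on $\Conf(E\setminus B)$: the conditional measure of $\PP_K$ given the event $\{\#_B=0\}$ and the exterior configuration, the conditional measure of $\PP_K$ given $\{\#_B=1\}$ with the unique particle at $p$, and the Palm measure $\PP_K^{p}$ restricted (via $\pi_{E\setminus B}$) appropriately. The key identity is the disintegration of $\PP_K$ over the configuration in $B^c$: by \eqref{f-dis-int} with $W=B^c$, the restriction $(\pi_{B^c})_*\PP_K$ together with the conditional measures $\PP_K(\cdot|\X,B^c)$ on $\Conf(B)$ recover $\PP_K$. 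The Palm measure $\PP_K^{p}$ is, informally, $\PP_K$ conditioned to have a particle at $p$, with that particle deleted; so the law of the exterior part $\mathscr X\setminus B$ under $\PP_K^{p}$ is absolutely continuous with respect to the law of the exterior part under $\PP_K$ conditioned on $\#_B\ge 1$, which in turn, being a conditioning on a positive-probability-after-integration event, is absolutely continuous with respect to $(\pi_{B^c})_*\PP_K$. Running the same comparison on the $B$-side, the content of deletion tolerance is exactly that $\PP_K(\#_B=0|\X,B^c)>0$ for $\PP_K$-a.e.\ $\X$, which guarantees that conditioning on $\{\#_B=0\}$ does not kill any $\PP_K$-positive set of exterior configurations; combining with the $B^c$-marginal comparison above yields $\PP_K^{p}\ll\PP_K$.

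More concretely, the cleanest route is: (i) write $\PP_K^{p}(A)$ for a measurable $A\subset\Conf(E)$ using the reduced-Palm disintegration localized near $p$, expressing it as a $\rho_1$-density-weighted conditional expectation of $\mathbbm 1_A(\mathscr X\setminus\{p\})$ given that $\mathscr X$ has a particle near $p$; (ii) use the Georgii--Nguyen--Zessin / Mecke-type identity (which for determinantal processes is encoded in \eqref{def-Palm-kernel} and the surrounding discussion, or one may simply use \eqref{f-dis-int}) to rewrite this as an integral against $\PP_K$ of a conditional quantity; (iii) observe that if $\PP_K(A)=0$ then $\mathbbm 1_A=0$ $\PP_K$-a.s., hence the conditional quantity vanishes $\PP_K$-a.s., and deletion tolerance is precisely what ensures the conditional weights are strictly positive so that the vanishing propagates back to $\PP_K^{p}(A)=0$. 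I would organize this so that the only nontrivial analytic input is Definition \ref{def-del}; everything else is measure-theoretic bookkeeping with Rohlin disintegrations.

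The main obstacle I anticipate is the careful handling of the exceptional null sets: the conditional measures $\PP_K(\cdot|\X,B^c)$, the Palm kernels $K^{p}$, and the iterated Palm construction are each defined only almost everywhere, and one must check that the event $\{\#_B=0\}$ is genuinely charged by $\PP_K(\cdot|\X,B^c)$ for $\PP_K$-a.e.\ $\X$ \emph{and} that this is compatible, after integrating in $p$ against $\rho_1$, with the a.e.-defined Palm disintegration — i.e.\ that we are not conditioning on a set that, although positive for a.e.\ fixed exterior, could still interact badly with the $\rho_1$-null set on which $\PP_K^{p}$ is undefined. A clean way around this is to prove the contrapositive measure-by-measure: assume $\PP_K(A)=0$, use the Palm disintegration to get $\int_E\rho_1(dp)\PP_K^{p}(A\cap\{\text{particle at }p\text{ deleted}\})=0$ is automatic, and instead show that if $\PP_K^{p_0}(A)>0$ on a $\rho_1$-positive set of $p_0$ then, via the monotone/insertion relationship and deletion tolerance run in reverse, $\PP_K$ would charge a set it cannot — but here, since we only need $\ll$ in one direction, the direct argument through \eqref{f-dis-int} and Definition \ref{def-del} suffices and avoids insertion tolerance entirely.
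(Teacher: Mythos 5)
Your plan diverges substantially from the paper's, and as sketched it has a genuine gap. The paper does not decompose over a small ball $B$ at all. Instead it (i) uses the Shirai--Takahashi explicit formula to get the operator inequality $K^{\mathfrak{p}}\le K$, hence, by Lyons's theorem, stochastic domination $\PP_K^{\mathfrak{p}}\preceq \PP_K$, and by Strassen's theorem a monotone coupling $\mathscr{Y}\subset\mathscr{X}$ with $\mathcal{L}(\mathscr{X})=\PP_K$, $\mathcal{L}(\mathscr{Y})=\PP_K^{\mathfrak{p}}$; (ii) shows $\mathscr{Z}:=\mathscr{X}\setminus\mathscr{Y}$ is $\bold{P}$-a.s.\ finite by the trace estimate $\tr(K-K^{\mathfrak{p}})<\infty$ (Lemma \ref{lem-Claim-A}); and then (iii) invokes Holroyd--Soo's Theorem~1.1, which says deletion tolerance is \emph{equivalent} to: for every coupled a.s.-finite $\mathscr{Z}\subset\mathscr{X}$, one has $\mathcal{L}(\mathscr{X}\setminus\mathscr{Z})\ll\mathcal{L}(\mathscr{X})$. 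That instantly gives $\PP_K^{\mathfrak{p}}=\mathcal{L}(\mathscr{Y})=\mathcal{L}(\mathscr{X}\setminus\mathscr{Z})\ll\PP_K$. The coupling is used precisely because Definition \ref{def-del} on its own (the gap-probability positivity) is awkward to compare against Palm conditioning; the Holroyd--Soo equivalence is the bridge, and you neither invoke it nor reprove it.

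The concrete gap in your sketch is the assertion that "the law of the exterior part under $\PP_K^p$ is absolutely continuous with respect to the law of the exterior part under $\PP_K$," and that the $B$-side comparison then closes using Definition \ref{def-del}. The exterior-marginal claim is essentially of the same difficulty as the proposition itself (it is an absolute-continuity statement between $(\pi_{B^c})_*\PP_{K^p}$ and $(\pi_{B^c})_*\PP_K$, and nothing in the disintegration \eqref{f-dis-int} or in deletion tolerance hands it to you). And on the $B$-side, the conditional law of $\PP_K^p$ given the exterior is not supported on $\{\#_B=0\}$: after deleting $p$ there may remain other particles of $\mathscr{X}$ in $B$, so the positivity $\PP_K(\#_B=0\mid\X,B^c)>0$ does not by itself give absolute continuity of the two $B$-conditionals. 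Your step (iii) ("deletion tolerance is precisely what ensures the conditional weights are strictly positive so that the vanishing propagates back") is the whole content of the proposition and is left unjustified; making it rigorous is what Holroyd--Soo's Theorem~1.1 does, and what the paper's coupling argument is designed to feed into. A further, secondary issue: you describe the monotone-coupling route in your last paragraph as "the monotone/insertion relationship ... run in reverse" and dismiss it, but the paper's use of the coupling here involves only deletion tolerance (via Holroyd--Soo), not insertion tolerance; avoiding the coupling is what creates the gap rather than what avoids one.
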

The proof of Proposition \ref{prop-del-ab} using monotone  coupling is given in  \S \ref{palm-abs}.
We proceed to the proof of Lemma \ref{lem-del}, which  uses the $H^{\infty}(U)$-module structure 
of $A^2(U, \omega)$.

\subsection{Deletion tolerance for $\PP_{K_\omega}$: proof of Lemma \ref{lem-del}}\label{sec-d-tl}
\subsubsection{Contractivity of the conditional operator}
 In our determinantal setting, the deletion tolerance has the following equivalent reformulation.  Let  $H\subset L^2(E, \mu)$ be a reproducing kernel Hilbert space such that its reproducing kernel $K$ is  locally trace class. Recall that by \eqref{cond-m-f},  for any relatively compact Borel subset $B\subset E$ and $\PP_{K}$-almost every configuration $\X$,  the conditional measure $\PP_{K}(\cdot|\X, B^c)$ is determinantal and induced by a trace class positive contraction $K^{[\X, B^{c}]}$. By the identity \eqref{id-gap} for the gap probability,  for $\PP_{K}$-almost every configuration $\X$, we have 
\[
\PP_{K}(\#_B = 0|\X, B^c) = \PP_{K^{[\X, B^c]}} (\#_B = 0) = \det (1 - K^{[\X, B^{c}]}). 
\]
 Therefore, $\PP_K$ is deletion tolerant if and only if for any relatively compact Borel subset $B\subset E$ and $\PP_{K}$-almost every configuration $\X \in \Conf(E)$, the conditional operator  $K^{[\X,B^{c}]}$ is strictly contractive. 

Our key step in the proof of   Lemma \ref{lem-del} is therefore to  prove that  for any  relatively compact Borel subset $B\subset U$ and $\PP_{K_\omega}$-almost every  configuration $\X\in \Conf(U)$, the space 
\[
\{h\in L^{2}(B): K_\omega^{[\X, B^{c}]}(h)=h\}
\]
is an $H^\infty(U)$-module and the strict contractivity of the conditional operator $K_\omega^{[\X, B^{c}]}$ follows.

\subsubsection{The module structure}

To clarify the r{\^o}le of the module structure, we formulate our results in an abstract and more general setting.  Throughout this section, we consider  a locally trace class operator $K$ that it is  the reproducing kernel of a  Hilbert space  $H\subset L^2(E, \mu)$. We need  the following additional 
\begin{assumption}\label{ass-K}
If a subset $B\subset E$ is relatively compact, then
the operator $\chi_B K$ is strictly contractive. 
\end{assumption} 

Recall  that by a sub-algebra $\mathcal{A}$ of $L^\infty(E, \mu)$, we mean a linear subspace in $L^\infty(E, \mu)$, not necessarily closed in the norm topology,  such that for any $a, b \in \mathcal{A}$, we have $a \cdot b \in \mathcal{A}$. Here $a \cdot b$ is the function obtained by pointwise multiplication of the functions $a$ and $b$.  Recall also that a linear subspace $H \subset L^2(E, \mu)$ is called an $\mathcal{A}$-module if for any $h \in H$ and $a \in \mathcal{A}$, we have $a \cdot h \in H$.

For any relatively compact Borel subset $B\subset E$ and $\PP_{K}$-almost every configuration $\X$, we have a trace class positive contraction $K^{[\X, B^{c}]}$ governing the conditional measure $\PP_K(\cdot| \X, B^c)$, cf.
 \S \ref{sec-cond-rep}. 
 Set
$$
V_{1}(K^{[\X,B^{c}]})=\{h\in L^{2}(B): K^{[\X, B^{c}]}(h)=h\}.
$$
\begin{lemma}\label{lem-module}
Let  $H\subset L^2(E, \mu)$ be a reproducing kernel Hilbert space such that its reproducing kernel $K$ is  locally trace class and satisfies Assumption \ref{ass-K}.  If $H$ is a module over a sub-algebra $\mathcal{A}\subset L^\infty(E,\mu)$,   then for any relatively compact  $B\subset E$ and $\PP_K$-almost every configuration $\X\in \Conf(E)$, the linear space $V_{1}(K^{[\X,B^{c}]})$ is an $\mathcal{A}$-module.
\end{lemma}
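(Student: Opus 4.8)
The goal is to show that $V_1(K^{[\X,B^c]})$, the eigenspace for eigenvalue $1$ of the conditional operator, is closed under multiplication by any $a \in \mathcal{A}$. The plan is to work at the level of the finite-rank approximations $K^{[\X,E_n]}$ from \S\ref{sec-cond-rep} and pass to the limit. First I would recall that $K^{[\X,E_n]}$ is the orthogonal projection onto $\overline{\chi_{B\cup F_n}H(\X\cap E_n)}$, so that its fixed-point space $V_1(\chi_{B\cup F_n}K^{[\X,E_n]}\chi_{B\cup F_n})$ is precisely this closed subspace. The first key observation is that $H(\X\cap E_n) = \{\varphi \in H : \varphi|_{\X\cap E_n} = 0\}$ is itself an $\mathcal{A}$-module: if $\varphi \in H$ vanishes on the finite point set $\X\cap E_n$ and $a \in \mathcal{A} \subset L^\infty(E,\mu)$, then $a\cdot\varphi \in H$ because $H$ is an $\mathcal{A}$-module, and $a\cdot\varphi$ still vanishes $\mu$-a.e. on a set containing $\X\cap E_n$ — here one must be a little careful since elements of $L^\infty$ are only defined a.e., but since $H$ is a reproducing kernel Hilbert space its elements have genuine pointwise values; the relevant point is that the $\mathcal{A}$-module property is assumed at the level of the a.e.-defined functions, and the vanishing condition $\varphi|_\X = 0$ should be understood in the a.e.\ sense consistent with how $H(\X)$ enters the construction in \cite{BQS16}.

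Next I would upgrade this to the multiplication operator itself: if $M_a$ denotes multiplication by $a$, then $M_a$ maps $\chi_{B\cup F_n}H(\X\cap E_n)$ into $\chi_{B\cup F_n}H(\X\cap E_n)$, since $a$ commutes with the cutoff $\chi_{B\cup F_n}$ (both act by pointwise multiplication) and preserves $H(\X\cap E_n)$. Hence $M_a$ leaves the closed subspace $\Ran(K^{[\X,E_n]})$ invariant. The eigenspace equation $K^{[\X,B^c]}h = h$ is recovered via the trace-norm convergence $\chi_B K^{[\X,E_n]}\chi_B \to K^{[\X,B^c]}$ from \eqref{operator-con}. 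Concretely, if $h \in L^2(B)$ satisfies $K^{[\X,B^c]}h = h$, then $\|\chi_B K^{[\X,E_n]}\chi_B h - h\| \to 0$; since $\chi_B K^{[\X,E_n]}\chi_B$ is a positive contraction, the condition $\langle (1 - \chi_B K^{[\X,E_n]}\chi_B)h, h\rangle \to 0$ together with $0 \le \chi_B K^{[\X,E_n]}\chi_B \le 1$ forces, by the spectral theorem, $\|K^{[\X,E_n]}h - h\|\to 0$ (not merely the compression). Then for $a \in \mathcal{A}$, since $\|a\|_\infty < \infty$ and $M_a$ commutes with $K^{[\X,E_n]}$ on $\Ran K^{[\X,E_n]}$, we get $\|K^{[\X,E_n]}(a\cdot h) - a\cdot h\| = \|M_a(K^{[\X,E_n]}h - h)\| \le \|a\|_\infty\|K^{[\X,E_n]}h - h\| \to 0$. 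Passing to the limit in the compression and using \eqref{operator-con} once more yields $K^{[\X,B^c]}(\chi_B\, a\cdot h) = \chi_B\, a\cdot h$; and since $h$ is supported in $B$, so is $a\cdot h$, giving $a\cdot h \in V_1(K^{[\X,B^c]})$.

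\textbf{Main obstacle.} I expect the delicate point to be the interchange of the multiplication operator $M_a$ with the \emph{conditional} operator rather than with the ambient projection $K$: the identity $M_a K^{[\X,E_n]} = K^{[\X,E_n]} M_a$ does \emph{not} hold on all of $L^2$, only the weaker invariance $M_a \Ran(K^{[\X,E_n]}) \subseteq \Ran(K^{[\X,E_n]})$, so one cannot naively commute operators in the limit. The correct route is the one sketched above: work entirely inside the ranges, use that $K^{[\X,E_n]}$ restricted to its range is the identity, and only then take norm limits. A secondary technical nuisance is reconciling the a.e.-defined nature of $\mathcal{A} \subset L^\infty(E,\mu)$ with the pointwise vanishing conditions defining $H(\X)$; this is handled by noting that the module hypothesis and the constructions of \cite{BQS16} are all formulated modulo $\mu$-null sets and that the finite sets $\X\cap E_n$ enter only through the corresponding Palm kernel formula \eqref{def-Palm-kernel}, which is insensitive to a.e.\ modification. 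Finally, one should record that the exceptional $\PP_K$-null set of configurations is the same one outside of which the representation in \S\ref{sec-cond-rep} is valid, so no new null set is introduced.
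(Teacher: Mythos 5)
Your proposal is correct in substance and follows essentially the same route as the paper: identify the range of each $K^{[\X,E_n]}$ as (the closure of) $\chi_{B\cup F_n}H(\X\cap E_n)$, use the $\mathcal{A}$-module property of $H$ to see that $M_a$ leaves this range invariant, upgrade the compressed convergence $\chi_B K^{[\X,E_n]}\chi_B h\to h$ to the unconditioned convergence $K^{[\X,E_n]}h\to h$, and then conclude $K^{[\X,E_n]}(ah)\to ah$ before applying the cutoff. This is exactly the content of the paper's two ``geometric observations,'' which you have reconstructed.

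One intermediate step is stated as an equality but is not literally one: you write
$K^{[\X,E_n]}(a h) - a h = M_a\bigl(K^{[\X,E_n]}h - h\bigr)$. Since $h$ need not lie in $\Ran K^{[\X,E_n]}$, the commutation only holds on the range, and the correct identity is
$K^{[\X,E_n]}(a h) - a h = \bigl(K^{[\X,E_n]} - 1\bigr)\bigl(a\,(1 - K^{[\X,E_n]})h\bigr)$,
obtained by splitting $h = K^{[\X,E_n]}h + (1-K^{[\X,E_n]})h$ and using that $K^{[\X,E_n]}$ fixes $a K^{[\X,E_n]}h$. This gives the estimate $\|K^{[\X,E_n]}(ah) - ah\| \le \|a\|_\infty\,\|(1-K^{[\X,E_n]})h\|$, which is all you need; so the slip is harmless, but the equality as written should be replaced by this identity. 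Your ``squeeze via spectral theorem'' justification for $K^{[\X,E_n]}h\to h$ is also a touch heavier than required — the norm inequality $\|\chi_B K^{[\X,E_n]}h\|\le\|K^{[\X,E_n]}h\|\le\|h\|$ together with the Pythagorean identity for the orthogonal projection $K^{[\X,E_n]}$ suffices, which is the paper's first geometric observation.
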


We can say more if the algebra $\mathcal{A}$ in Lemma \ref{lem-module} satisfies the following Assumption \ref{ass-H}.  Recall that a function $f: E\rightarrow \C$ is called {\it elementary step function} if there exists a finite Borel partition $E = \bigsqcup_{k=1}^n E_k$ and $(\lambda_1, \cdots, \lambda_n) \in \C^n$ such that $f = \sum_{k=1}^n \lambda_k \chi_{E_k}$.  

\begin{assumption}\label{ass-H}
  The algebra $\mathcal{A}$ satisfies: for any subset $C\subset E$ with $\mu(C)>0$, there exists an element $a \in \mathcal{A}$ such that the restriction $a|_{C}$ is not an elementary step function. 
\end{assumption}

\begin{lemma}\label{lem-strict-c}
Let  $H\subset L^2(E, \mu)$ be a reproducing kernel Hilbert space whose reproducing kernel $K$ is  locally trace class and satisfies Assumption \ref{ass-K}.  If $H$ is a module over a sub-algebra $\mathcal{A}\subset L^\infty(E,\mu)$ satisfying Assumption \ref{ass-H}, then for any relatively compact  $B\subset E$ and $\PP_K$-almost every $\X\in \Conf(E)$, the conditional operator $K^{[\X,B^{c}]}$ is strictly contractive, and, consequently, the determinantal point process $\PP_K$ is deletion tolerant. 
\end{lemma}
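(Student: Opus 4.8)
The plan is to reduce the statement to Lemma \ref{lem-module} together with the compactness of the conditional operator. Fix a relatively compact Borel set $B\subset E$. By Lemma \ref{lem-module}, for $\PP_K$-almost every configuration $\X$ the eigenspace $V_1(K^{[\X,B^c]})=\{h\in L^2(B):K^{[\X,B^c]}h=h\}$ is an $\mathcal{A}$-module. Since $K^{[\X,B^c]}$ is a trace class, hence compact, positive contraction, $V_1(K^{[\X,B^c]})$ is finite-dimensional. Thus it suffices to prove the following purely functional-analytic claim: if $\mathcal{A}$ satisfies Assumption \ref{ass-H}, then $\mathcal{A}$ admits no non-zero finite-dimensional module inside any $L^2(B)$; equivalently, the only finite-dimensional $\mathcal{A}$-submodule of $L^2(B,\mu)$ is $\{0\}$. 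Granting this claim we conclude $V_1(K^{[\X,B^c]})=\{0\}$, i.e. $1$ is not an eigenvalue of the positive contraction $K^{[\X,B^c]}$, so it is strictly contractive; deletion tolerance of $\PP_K$ then follows from the discussion preceding the lemma, since $\PP_K(\#_B=0\mid\X,B^c)=\det(1-K^{[\X,B^c]})>0$.

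To prove the claim, suppose for contradiction that $H_0\subset L^2(B,\mu)$ is a non-zero finite-dimensional $\mathcal{A}$-module, say $\dim H_0=N<\infty$, and let $C\subset B$ be the (essential) common support of the functions in $H_0$, a set of positive $\mu$-measure. For a fixed non-zero $h\in H_0$ and any $a\in\mathcal{A}$, all of $h, a\cdot h, a^2\cdot h,\dots, a^N\cdot h$ lie in the $N$-dimensional space $H_0$, hence they are linearly dependent: there is a non-trivial polynomial relation $\sum_{j=0}^N c_j\, a^j(z)h(z)=0$ for $\mu$-a.e.\ $z$. On the set where $h\neq 0$ — which up to a null set is $C$ — this forces $P(a(z)):=\sum_{j=0}^N c_j a(z)^j=0$ for $\mu$-a.e.\ $z\in C$, so $a(z)$ takes $\mu$-a.e.\ on $C$ only finitely many values (the roots of $P$). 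That is, $a|_C$ is an elementary step function. Since $a\in\mathcal{A}$ was arbitrary and $\mu(C)>0$, this contradicts Assumption \ref{ass-H}. Hence no such $H_0$ exists, proving the claim.

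I expect the only genuine subtlety to be the measure-theoretic bookkeeping in the contradiction step: one must work with essential supports and argue that the common support $C$ of a finite-dimensional subspace of $L^2$ is well defined up to null sets (take $C=\bigcup$ of a basis' supports), and that the polynomial relation, valid a.e., indeed pins $a|_C$ to finitely many values a.e. None of this is deep, but it is where care is needed. Everything else is immediate: Lemma \ref{lem-module} supplies the module structure, compactness of $K^{[\X,B^c]}$ (from the trace class property established in \S\ref{sec-cond-rep}) supplies finite-dimensionality of the $1$-eigenspace, and the gap-probability identity \eqref{id-gap} converts strict contractivity into deletion tolerance.
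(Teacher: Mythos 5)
Your proof is correct and follows essentially the same route as the paper's: Lemma \ref{lem-module} plus compactness of the conditional operator, combined with the observation that Assumption \ref{ass-H} forbids nontrivial finite-dimensional $\mathcal{A}$-modules. The paper phrases the key step as the contrapositive (the powers $a,a^2,\dots$ are linearly independent on a set of positive measure, so $h\mathcal{A}_B$ is infinite-dimensional, via Corollary \ref{cor-inf}), whereas you phrase it directly (if the module has dimension $N$, the $N+1$ vectors $h,ah,\dots,a^N h$ are dependent, giving a polynomial relation that pins $a$ to finitely many values); these are the same argument. One small slip: the set $\{h\neq 0\}$ is not $C$ up to a null set — it is only a positive-measure subset of the common support $C$ — but this is harmless, since you should simply apply Assumption \ref{ass-H} to $\{h\neq 0\}$ (or any positive-measure subset thereof) rather than to $C$.
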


To apply Lemma \ref{lem-module} and Lemma \ref{lem-strict-c} in our concrete case of weighted Bergman kernel $K_\omega$, we still need to prove that $K_\omega$ satisfies Assumption \ref{ass-K}. This  is a  simple consequence of the following {\it unique extension property} of the space $A^2(U, \omega)$.
We say (cf. \cite[Assumption 2]{Buf-inf-1}, \cite[Assumption (A3)]{Qiu-Adv}) 
 that the  Hilbert space $H \subset L^2(E, \mu)$ has the unique extension property, if  for any $C\subset E$ with $\mu(C) >0$, we have: 
\[
\text{for all $\varphi \in H$, the condition $\varphi|_{C} \equiv 0$ implies $\varphi \equiv 0$.}
\]

\begin{lemma}\label{lem-ext}
If a reproducing kernel Hilbert space $H$  has the unique extension property, then its reproducing kernel $K$ satisfies Assumption \ref{ass-K}. 
\end{lemma}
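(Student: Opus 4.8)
The plan is to show that the unique extension property forces $\chi_B K$ to be strictly contractive whenever $B$ is relatively compact, i.e. that $1$ is not an eigenvalue of the positive contraction $\chi_B K \chi_B$ acting on $L^2(E,\mu)$. Since $K$ is the reproducing kernel of $H$, it is the orthogonal projection onto $H$, so for $\varphi \in L^2(E,\mu)$ we have $\langle \chi_B K \chi_B \varphi, \varphi\rangle = \langle K\chi_B\varphi, \chi_B\varphi\rangle = \|K\chi_B\varphi\|^2 \le \|\chi_B\varphi\|^2 \le \|\varphi\|^2$. Thus $\|\chi_B K \chi_B\| \le 1$ automatically; the content is the strictness.

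\medskip

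First I would suppose, for contradiction, that $1$ is an eigenvalue: there is $\varphi \in L^2(E,\mu)$ with $\|\varphi\|=1$ and $\chi_B K \chi_B \varphi = \varphi$. Taking inner products gives $\langle \chi_B K \chi_B \varphi,\varphi\rangle = \|\varphi\|^2 = 1$, and chasing through the two inequalities above forces equality in both. Equality in $\|\chi_B\varphi\| \le \|\varphi\|$ means $\varphi = \chi_B\varphi$, i.e. $\varphi$ is supported on $B$; equality in $\|K\chi_B\varphi\| \le \|\chi_B\varphi\|$ means $\chi_B\varphi = \varphi \in H$ (the projection $K$ acts as the identity on it), so $\varphi \in H$ and $\varphi$ vanishes $\mu$-almost everywhere off $B$. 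Since $B$ is relatively compact and $\mu$ has full support, $\mu(E \setminus B)$ may be zero only if $E = B$ up to null sets, which is excluded — or more robustly, one observes directly that $\varphi|_{E\setminus B} \equiv 0$ while $\mu(E \setminus B) > 0$ (if $\mu(E\setminus B)=0$ then $E$ itself is relatively compact and one picks a smaller $B$, or the statement is vacuous). By the unique extension property applied to $C = E\setminus B$, the condition $\varphi|_{C}\equiv 0$ forces $\varphi \equiv 0$, contradicting $\|\varphi\|=1$.

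\medskip

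The main obstacle, such as it is, is purely the bookkeeping around the degenerate case $\mu(E\setminus B) = 0$: one must either note that in the applications $E = U$ is an open subset of $\C^d$ and $B$ relatively compact, so $\mu(E\setminus B)>0$ always, or phrase the lemma so that strict contractivity is understood to assert that $1$ is not an eigenvalue, which is automatic when $\chi_B$ is not the identity. No real analytic input is needed here; the unique extension property does all the work, and the argument is the standard "equality in the projection inequality" trick combined with the fact that a reproducing kernel is a projection. I would close by recording that, for the weighted Bergman space $A^2(U,\omega)$, the unique extension property is immediate: a holomorphic function on the connected domain $U$ that vanishes on a set of positive Lebesgue measure vanishes identically, so Lemma~\ref{lem-ext} applies and $K_\omega$ satisfies Assumption~\ref{ass-K}.
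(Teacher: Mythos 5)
Your proof is essentially correct and lands in the same place as the paper's, but the path is organized a bit differently and is worth noting. The paper sets $\psi = K\varphi$, derives $\chi_B K(\chi_{E\setminus B}\psi)=0$, and then invokes the unique extension property \emph{twice}: first on $C=B$ (to upgrade to $K(\chi_{E\setminus B}\psi)=0$, hence $\chi_{E\setminus B}\psi\perp H$, hence $\chi_{E\setminus B}\psi=0$), and then on $C=E\setminus\overline{B}$ to kill $\psi$. You shortcut this by exploiting equality in the chain $\|K\chi_B\varphi\|\le\|\chi_B\varphi\|\le\|\varphi\|$ to conclude directly that $\varphi\in H$ and $\varphi=\chi_B\varphi$, so a single application of unique extension to $C=E\setminus B$ finishes. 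That is a cleaner route, and it uses one fewer invocation of the hypothesis.

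The one place where your argument is looser than the paper's is the degenerate case $\mu(E\setminus B)=0$. You wave at it ("pick a smaller $B$, or the statement is vacuous"), but "pick a smaller $B$" does not obviously help, since the contradiction must be derived for the given $B$. The paper handles this cleanly: it first observes (via Corollary~\ref{cor-inf}, using the module hypothesis in force throughout that section) that $\dim H=\infty$, and since $K$ is locally trace class this forces $E$ to be non-compact; hence $E\setminus\overline{B}$ is a nonempty open set, so $\mu(E\setminus\overline{B})>0$ because $\mu$ has full support. You should make a corresponding observation — e.g.\ that if $\mu(E\setminus B)=0$ with $\mu$ of full support then $\overline{B}=E$ is compact, forcing $K$ to be trace class and hence of finite rank, contradicting $\dim H=\infty$ — rather than leaving it implicit. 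Also, when you pass from $\|\chi_B K\|=1$ to the existence of a norm-one eigenvector of $\chi_B K\chi_B$ for the eigenvalue $1$, you should say explicitly that $\chi_B K\chi_B$ is compact (this is exactly where the local trace class hypothesis enters), as the paper does; a general positive contraction of norm $1$ need not have $1$ in its point spectrum.
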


The proofs of Lemmata  \ref{lem-module},  \ref{lem-strict-c},  \ref{lem-ext} are given  in  \S \ref{sec-lem-module}, \S \ref{sec-strict-c},   \S \ref{sec-extension} respectively. We now  conclude the proof of Lemma \ref{lem-del}. The space $A^2(U, \omega)$ is an $H^\infty(U)$-module.  Since $U$ is connected,   the Hilbert space $A^2(U, \omega)$ has the unique extension property. Then by Lemma \ref{lem-ext}, the weighted Bergman kernel $K_\omega$ satisfies Assumption \ref{ass-K}. Since $H^\infty(U)$ contains a non-constant function,  the algebra $H^\infty(U)$ satisfies Assumption \ref{ass-H}. Lemma  \ref{lem-strict-c} now gives the desired deletion tolerance of $\PP_{K_\omega}$. \qed

\subsubsection{Proof of Lemma \ref{lem-module}}\label{sec-lem-module}

Let  $H \subset L^2(E, \mu)$ be a reproducing kernel Hilbert space with a locally trace class reproducing kernel $K$ satisfying Assumption \ref{ass-K} and let $\mathcal{A} \subset L^\infty(E, \mu)$ be an algebra. Assume that $H$ is an $\mathcal{A}$-module.  
\begin{lemma}\label{lem-closed}
Let $C\subset E$.  Assume that the operator $\chi_C K$ is strictly contractive. Then for any closed subspace $S\subset H$, the linear subspace $\chi_{E\setminus C} S$ is closed in $L^2(E, \mu)$. 
\end{lemma}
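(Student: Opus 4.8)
The plan is to show that multiplication by $\chi_{E\setminus C}$ is bounded below on the closed subspace $S\subset H$, which immediately gives that its image is closed. Concretely, for $\varphi\in S\subset H$ write $\|\varphi\|_{L^2(E,\mu)}^2 = \|\chi_C\varphi\|^2 + \|\chi_{E\setminus C}\varphi\|^2$, so it suffices to produce a constant $c<1$ with $\|\chi_C\varphi\|^2 \le c\,\|\varphi\|^2$ for all $\varphi\in H$; then $\|\chi_{E\setminus C}\varphi\|^2 \ge (1-c)\|\varphi\|^2$, and if $\chi_{E\setminus C}\varphi_n$ is Cauchy in $L^2$ with $\varphi_n\in S$, the inequality forces $\varphi_n$ to be Cauchy in $H$, hence $\varphi_n\to\varphi\in S$ by closedness of $S$, and $\chi_{E\setminus C}\varphi_n\to\chi_{E\setminus C}\varphi\in\chi_{E\setminus C}S$. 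So the whole lemma reduces to the norm estimate $\|\chi_C\varphi\|^2\le c\|\varphi\|^2$ on $H$.

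Next I would extract that estimate from the hypothesis that $\chi_C K$ is strictly contractive, using that $K$ is the orthogonal projection from $L^2(E,\mu)$ onto $H$. For $\varphi\in H$ we have $K\varphi=\varphi$, so $\|\chi_C\varphi\|^2 = \langle \chi_C\varphi,\chi_C\varphi\rangle = \langle \chi_C K\varphi, \chi_C K\varphi\rangle = \langle \varphi, K\chi_C K\varphi\rangle$. Now $K\chi_C K = (\chi_C K)^*(\chi_C K)$, so $\|\chi_C\varphi\|^2 = \|\chi_C K\varphi\|_{L^2(E,\mu)}^2 \le \|\chi_C K\|_{\mathrm{op}}^2\,\|\varphi\|^2$. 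Strict contractivity of $\chi_C K$ means $\|\chi_C K\|_{\mathrm{op}} =: \sqrt{c}<1$ (here one uses that $\chi_C K$ being a contraction with no singular value equal to $1$, together with — if needed — compactness coming from the local trace class property when $C$ is relatively compact, or simply the definition of strictly contractive as operator norm $<1$), giving exactly $\|\chi_C\varphi\|^2\le c\|\varphi\|^2$ with $c<1$, as required.

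Finally I would assemble the two pieces: fix a sequence $\psi_n = \chi_{E\setminus C}\varphi_n$ with $\varphi_n\in S$ converging in $L^2(E,\mu)$ to some $\psi$; by the estimate above, $\|\varphi_n-\varphi_m\|_H^2 \le \tfrac{1}{1-c}\|\psi_n-\psi_m\|^2 \to 0$, so $(\varphi_n)$ converges in $H$ to some $\varphi\in S$ (using closedness of $S$ in $H$), and then $\psi_n = \chi_{E\setminus C}\varphi_n \to \chi_{E\setminus C}\varphi$ in $L^2$, whence $\psi = \chi_{E\setminus C}\varphi\in\chi_{E\setminus C}S$. Thus $\chi_{E\setminus C}S$ is closed. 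Note the role of the $\mathcal{A}$-module structure of $H$ is not actually needed for this particular lemma — it is only the ambient hypothesis carried along from the section — so I would not invoke it; the only substantive input is Assumption~\ref{ass-K}-type strict contractivity of $\chi_C K$, which is supplied here as a hypothesis on the specific set $C$. I do not expect a serious obstacle; the one point to be careful about is the precise meaning of \emph{strictly contractive} (operator norm strictly less than $1$ versus merely having no eigenvalue $1$) — if the paper's convention is the latter, one additionally needs compactness of $\chi_C K$, which follows from $K$ being locally trace class when $C$ is relatively compact, to upgrade "no eigenvalue $1$" to "operator norm $<1$".
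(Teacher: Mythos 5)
Your proof is correct and follows essentially the same route as the paper: both use $h = Kh$ for $h\in H$ to get $\|\chi_C h\|_2 \le \|\chi_C K\|\,\|h\|_2$, deduce the lower bound $\|\chi_{E\setminus C}h\|_2 \ge \sqrt{1-\|\chi_C K\|^2}\,\|h\|_2$, and conclude that multiplication by $\chi_{E\setminus C}$ is a bounded-below map on $H$ whose image of any closed subspace is closed. Your side remark about the meaning of ``strictly contractive'' is resolved the way you suspected: the paper's convention is operator norm strictly less than $1$ (this is what Lemma~\ref{lem-ext} actually proves), so no compactness upgrade is needed here.
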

\begin{proof} We use  Assumption \ref{ass-K}.
For any $h\in H$, since $h = K(h)$,  we have
\[
\| \chi_C h\|_2 = \| \chi_C K(h)\|_2 \le \|\chi_C K\|\cdot \| h\|_2.  
\]
It follows that  
\[
\|h\|_2^2 =  \| \chi_C h\|_2^2 + \| \chi_{E\setminus C} h\|_2^2  \le  \|\chi_C K\|^2 \cdot \| h\|_2^2 + \| \chi_{E\setminus C} h\|_2^2
\]
and hence 
\[
\| \chi_{E\setminus C} h\|_2 \ge  \sqrt{1 -\| \chi_C K \|^2}  \cdot \| h\|_2.
\]
It follows that the restriction map $H \rightarrow  \chi_{E\setminus C} H$,  that sends $h\in H$ to $ \chi_{E\setminus C} h$,  is a linear isomorphism between normed spaces. In particular, any closed subspace $S\subset H$ has a closed image $\chi_{E\setminus C}S$. The proof is complete. 
\end{proof}
We  start the proof of Lemma \ref{lem-module} with two elementary geometric observations. 
\begin{lemma}[First geometric observation]\label{1-geo}
Let $\widehat{P}, P_1, P_2, \cdots$ be orthogonal projections on a Hilbert space $\mathcal{H}$.  If  a vector $h\in \mathcal{H}$ satisfies
\[
h = \lim\limits_{j\to\infty} \widehat{P} P_j h,
\]
where the convergence takes place in the norm topology, then $h = \lim\limits_{j\to\infty} P_j h$. 
\end{lemma}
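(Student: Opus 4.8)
The plan is to prove the First Geometric Observation (Lemma~\ref{1-geo}) by exploiting the basic fact that an orthogonal projection is a norm-one operator that \emph{strictly} decreases norm on vectors outside its range: for any orthogonal projection $P$ and any $v$, one has $\|Pv\|_2^2 = \|v\|_2^2 - \|v - Pv\|_2^2$. The key observation is that the hypothesis forces $P_j h$ to have asymptotically full norm, which in a Hilbert space is enough to conclude norm convergence $P_j h \to h$ once we also know $\widehat{P}P_j h \to h$.

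Here is the sequence of steps. First I would note that $\|\widehat{P}P_j h\|_2 \le \|P_j h\|_2 \le \|h\|_2$ for every $j$, since both $\widehat{P}$ and $P_j$ are contractions. Since $\widehat{P}P_j h \to h$ by hypothesis, taking norms gives $\|\widehat{P}P_j h\|_2 \to \|h\|_2$, and the sandwiching forces
\[
\|P_j h\|_2 \longrightarrow \|h\|_2 .
\]
Second, I would use the Pythagorean identity for the orthogonal projection $P_j$:
\[
\|h - P_j h\|_2^2 = \|h\|_2^2 - \|P_j h\|_2^2 ,
\]
which is valid because $P_j h$ is the orthogonal projection of $h$ onto $\Ran(P_j)$, so $h - P_j h \perp P_j h$. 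The right-hand side tends to $0$ by the first step, hence $\|h - P_j h\|_2 \to 0$, i.e.\ $h = \lim_j P_j h$ in norm, as claimed.

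There is essentially no serious obstacle here; the statement is a soft Hilbert-space fact and the only thing to be careful about is to invoke the exact Pythagorean relation $\|h - P_jh\|_2^2 = \|h\|_2^2 - \|P_jh\|_2^2$ (valid for \emph{orthogonal} projections, which is exactly the hypothesis) rather than a mere inequality. If one wanted an alternative route, one could instead observe that $P_jh \to h$ weakly is automatic from $\widehat P P_j h\to h$ together with boundedness (any weak subsequential limit $v$ satisfies $\widehat P v = h$ and $\|v\|\le\|h\|=\|\widehat P v\|\le\|v\|$, forcing $v=h$), and then upgrade weak to norm convergence using $\|P_jh\|\to\|h\|$; but the direct Pythagorean argument above is shorter and is the one I would record.
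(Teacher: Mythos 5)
Your proof is correct and is essentially identical to the paper's: both first sandwich $\|\widehat P P_jh\|\le\|P_jh\|\le\|h\|$ to deduce $\|P_jh\|\to\|h\|$, and then apply the Pythagorean identity $\|h-P_jh\|^2=\|h\|^2-\|P_jh\|^2$ for the orthogonal projection $P_j$. No differences worth noting.
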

\begin{proof}
We have 
\begin{align}\label{norm-2-s}
\| \widehat{P}P_j h\| \le  \|  P_j h\| \le  \| h\|. 
\end{align}
The assumption $h = \lim\limits_{j\to\infty} \widehat{P} P_j h$ implies that $\| h\| = \lim\limits_{j\to\infty} \|\widehat{P} P_j h\|$, which combined with \eqref{norm-2-s} implies   $\| h\| = \lim\limits_{j\to\infty} \| P_j h\|$. Since $h - P_j h$ and $P_j h$ are orthogonal, we have
\[
\| h - P_j h\| = \sqrt{\| h\|^2 - \| P_j h\|^2},
\]
and we obtain  the desired equality
$
\lim\limits_{j\to\infty} \| h - P_j h\| = 0. 
$
\end{proof}

\begin{lemma}[Second geometric observation]\label{2-geo}
Let $P_1, P_2, \cdots$ be orthogonal projections on a Hilbert space $\mathcal{H}$ with range $\mathcal{H}_j \subset \mathcal{H}$.  Then $h  = \lim\limits_{j\to\infty} P_j h$ if and only if there exists a sequence $(h_j)_j$ of vectors with $h_j \in \mathcal{H}_j$ such that $h = \lim\limits_{j\to\infty} h_j$. 
\end{lemma}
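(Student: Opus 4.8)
The plan is to prove the \emph{Second geometric observation} (Lemma~\ref{2-geo}) directly from the definitions, as it is a completely elementary Hilbert space fact with a short two-directional argument. First I would handle the easy implication: if $h = \lim_{j\to\infty} P_j h$, then taking $h_j := P_j h \in \mathcal{H}_j$ gives a sequence with $h = \lim_{j\to\infty} h_j$, so there is nothing to do.

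For the converse, suppose $h_j \in \mathcal{H}_j$ with $h_j \to h$. The key point is that $P_j$ is the orthogonal projection onto $\mathcal{H}_j$, hence $P_j h$ is the \emph{closest} point of $\mathcal{H}_j$ to $h$; in particular, since $h_j \in \mathcal{H}_j$, we have the estimate $\|h - P_j h\| \le \|h - h_j\|$. Letting $j\to\infty$, the right-hand side tends to $0$ by hypothesis, so $\|h - P_j h\| \to 0$, which is exactly $h = \lim_{j\to\infty} P_j h$.

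I do not anticipate any real obstacle here: the only ingredient is the extremal characterization of orthogonal projections (a projection minimizes distance to its range), which is standard and requires no appeal to the earlier material on determinantal processes or module structures. The entire proof fits in a few lines.

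\begin{proof}
If $h = \lim_{j\to\infty} P_j h$, take $h_j = P_j h \in \mathcal{H}_j$; then $h = \lim_{j\to\infty} h_j$. Conversely, suppose $h_j \in \mathcal{H}_j$ and $h = \lim_{j\to\infty} h_j$. Since $P_j$ is the orthogonal projection onto $\mathcal{H}_j$, the vector $P_j h$ is the element of $\mathcal{H}_j$ nearest to $h$, so, using $h_j \in \mathcal{H}_j$,
\[
\| h - P_j h\| \le \| h - h_j\| \xrightarrow[j\to\infty]{} 0.
\]
Hence $h = \lim_{j\to\infty} P_j h$.
\end{proof}
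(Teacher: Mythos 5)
Your proof is correct and matches the paper's argument essentially verbatim: both directions rely on taking $h_j = P_j h$ for the forward implication, and on the extremal property $\|h - P_j h\| = \min_{v\in\mathcal{H}_j}\|h-v\| \le \|h-h_j\|$ for the converse. No meaningful difference.
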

\begin{proof}
If $h  = \lim\limits_{j\to\infty} P_j h$, then by taking $h_j = P_j h\in \mathcal{H}_j$, we obtain $h = \lim\limits_{j\to\infty} h_j$.  Conversely, assume that $h = \lim\limits_{j\to\infty} h_j$ with $h_j \in \mathcal{H}_j$, then 
\[
\| h - P_j h \|  = \min_{v \in \mathcal{H}_j}\| h - v\| \le  \| h  - h_j\| \xrightarrow{j\to\infty}0.
\]
Hence we have $h  = \lim\limits_{j\to\infty} P_j h$. 
\end{proof}

\begin{proof}[Proof of Lemma \ref{lem-module}]
We use the notation introduced in  \S \ref{sec-cm}.  By Lemma \ref{lem-closed},  the linear subspace $\chi_{B\cup F_n}H(\mathfrak{X}\cap E_n)$ is closed in $H$. Therefore,  for $\PP_{K}$-almost every configuration $\mathfrak{X}\in \Conf(E)$, the range of the operator $K^{[\X, E_n]}$ is 
  \begin{align}\label{range-id}
  \Ran  (K^{[\X, E_n]})  = \overline{\chi_{B\cup F_n}H(\mathfrak{X}\cap E_n)}^{L^2(E, \mu)} = \chi_{B\cup F_n}H(\mathfrak{X}\cap E_n).
  \end{align}

If $V_{1}(K^{[\X,B^{c}]})=\{0\}$, then it is an $\mathcal{A}$-module.

Now, we assume that  $V_{1}(K^{[\X,B^{c}]})\neq \{0\}$. Take any $h\in V_1(K^{[\X,B^{c}]})$ with $\|  h \|_{2}=1$ and any $a \in \mathcal{A}$. By definition and the convergence \eqref{operator-con}, we have 
\begin{align}\label{limit}
h=K^{[\X,B^{c}]}(h)=\lim\limits_{n\to \infty} \chi_{B} K^{[\X, E_n]}\chi_{B}(h),
\end{align}
where the convergence takes place in the norm topology of $L^2(B, \mu)$.  The relation \eqref{limit} implies  $h  = \chi_B h$ and hence 
\[
 h=\lim\limits_{n\to \infty} \chi_{B} K^{[\X, E_n]}(h). 
\]
By Lemma \ref{1-geo},  the above convergence implies 
\[
h=\lim\limits_{n\to \infty}  K^{[\X, E_n]}(h). 
\] 
For any $n\in \N$, denote $\psi_{n}:= K^{[\X, E_n]}(h)$. Then by \eqref{range-id}, we have 
\[
\psi_{n} \in \chi_{B\cup F_n} H(\X\cap E_n).
\]
In other words,  there exists an element $\phi_n \in H(\X\cap E_n)$ such that   $ \psi_{n}=\chi_{B\cup F_{n}} \cdot \phi_n$. But then 
\[
h \cdot a  = (\lim\limits_{n\to\infty} \psi_n) \cdot a = \lim\limits_{n\to\infty} \psi_n  \cdot a  = \lim\limits_{n\to\infty }\chi_{B\cup F_{n}} \cdot \phi_n \cdot a.
\]
Since $\phi_n \cdot a \in H(\X\cap E_n)$, we have $ \chi_{B\cup F_{n}} \cdot \phi_n \cdot a
 \in   \chi_{B\cup F_n} H(\X\cap E_n)$.  Therefore, by Lemma \ref{2-geo}, we obtain 
\[
h \cdot a = \lim\limits_{n\to\infty}K^{[\X, E_n]}(h \cdot a). 
\]
The above limit relation combined with the equality $h = \chi_B h$ implies 
\[
h \cdot a = \lim\limits_{n\to\infty} \chi_B K^{[\X, E_n]} \chi_B (h \cdot a) = K^{[\X, B^c]} (h \cdot a)
\]
and hence $h\cdot a \in V_1(K^{[\X, B^c]})$. The proof is complete.
\end{proof}

\subsubsection{Proof of Lemma \ref{lem-strict-c}}\label{sec-strict-c}
Let  $H \subset L^2(E, \mu)$ be a reproducing kernel Hilbert space with a locally trace class reproducing kernel $K$ satisfying Assumption \ref{ass-K} and let $\mathcal{A} \subset L^\infty(E, \mu)$ be an algebra satisfying Assumption \ref{ass-H}. Assume that $H$ is an $\mathcal{A}$-module.  

 For any Borel subset $C\subset E$, we denote 
\[
\mathcal{A}_{C} = \{   a|_C: a \in \mathcal{A}\}.
\]
 
\begin{lemma} 
If $\mu(C) >0$, then $\dim  \mathcal{A}_C  = \infty.$
\end{lemma}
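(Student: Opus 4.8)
The plan is to produce infinitely many linearly independent elements of $\mathcal{A}_C$ out of the powers of a single well-chosen function. First I would invoke Assumption \ref{ass-H}: since $\mu(C)>0$, there is an element $a\in\mathcal{A}$ whose restriction $a|_C$ is \emph{not} an elementary step function. Because $\mathcal{A}$ is an algebra, every power $a^k$ with $k\ge 1$ lies in $\mathcal{A}$, and hence $a^k|_C\in\mathcal{A}_C$ for all $k\ge 1$. The claim will follow once we show these restrictions span an infinite-dimensional space.

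The argument is by contradiction. Suppose $\dim\mathcal{A}_C = N<\infty$. Then the $N+1$ functions $a|_C, a^2|_C,\dots,a^{N+1}|_C$ are linearly dependent, so there is a nontrivial relation $\sum_{k=1}^{N+1} c_k\, a^k|_C = 0$ holding $\mu$-almost everywhere on $C$. Setting $p(t)=\sum_{k=1}^{N+1} c_k t^k$, this is a nonzero polynomial with vanishing constant term, so $1\le\deg p\le N+1$; in particular $p$ has only finitely many roots. The relation reads $p(a(x))=0$ for $\mu$-a.e.\ $x\in C$, so $a(x)$ belongs to the finite root set of $p$ for $\mu$-a.e.\ $x\in C$.

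Consequently $a|_C$ takes only finitely many distinct values up to a $\mu$-null set. Partitioning $C$ into the (Borel) level sets of $a$ corresponding to these finitely many values, and extending by an arbitrary constant (say $0$) outside $C$, exhibits $a|_C$ as an elementary step function in the sense of the definition preceding Assumption \ref{ass-H}. This contradicts the choice of $a$, and therefore $\dim\mathcal{A}_C=\infty$.

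I do not anticipate any real difficulty: the core is the elementary fact that a nonzero one-variable polynomial has finitely many roots, combined with linear algebra. The only point requiring mild care is the bookkeeping of the ``$\mu$-almost everywhere'' qualifiers and the reading of ``elementary step function'' for a function defined only on the subset $C$, both of which are dispatched by the level-set partition above.
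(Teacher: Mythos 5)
Your proposal is correct and follows essentially the same route as the paper: both invoke Assumption \ref{ass-H} to obtain an $a$ with $a|_C$ not an elementary step function, use the algebra structure to get all powers $a^k|_C \in \mathcal{A}_C$, and derive a contradiction from a nontrivial linear relation by observing that $a|_C$ would then take values in the finite zero set of a nonzero polynomial, making it an elementary step function.
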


\begin{proof}
By Assumption \ref{ass-H}, there exists $a\in \mathcal{A}$ such that $a|_C$ is not an elementary step function. Since $\mathcal{A}$ is an algebra, for any $n\in \N$, we have $a^n\in \mathcal{A}$. Now it suffices to show that the elements $a^n|_C$ are linearly independent.  Indeed, if this is not the case, then there exists a number $N$ and $(\lambda_1, \cdots,\lambda_N) \in \C^N\setminus \{0\}$ such that  
\[
0 = \sum_{k=1}^N   \lambda_k a^k|_C =  \sum_{k=1}^N   \lambda_k (a|_C)^k.
\]
It follows that the values of $a|_C$ is contained in the finite set of zeros of the polynomial $p(z) = \sum_{k=1}^N   \lambda_k z^k$. Therefore $a|_C$ is an elementary step function, which contradicts our assumption.
\end{proof}

\begin{corollary}\label{cor-inf}
Let  $B\subset E$ be any relatively compact Borel subset. If  $h \in H$ and $\| \chi_B h\|_2 \ne 0$, then
\[
\dim  ( h \mathcal{A}_B)  = \dim (\{  \chi_B h a: a \in \mathcal{A}\}) =  \infty.
\]
\end{corollary}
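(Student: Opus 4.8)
The plan is to reduce this to the preceding lemma (that $\mu(C)>0$ implies $\dim\mathcal{A}_C=\infty$) by restricting attention to the set inside $B$ where $h$ is nonzero. First I would fix a Borel representative of $h$ and set
\[
C:=\{x\in B: h(x)\neq 0\}.
\]
Since $\|\chi_B h\|_2\neq 0$, we have $\mu(C)>0$, and the preceding lemma yields $\dim\mathcal{A}_C=\infty$. Note also that $\chi_B h=\chi_C h$ because $h$ vanishes $\mu$-a.e.\ on $B\setminus C$, so $\chi_B h\,a=\chi_C h\,a$ for every $a\in\mathcal{A}$; thus the two displayed spaces in the statement coincide with $\{\chi_C h\,a : a\in\mathcal{A}\}$, and it suffices to show this last space is infinite-dimensional.

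The key step is to consider the linear map $\Phi:\mathcal{A}_C\to L^2(B,\mu)$ defined by $\Phi(a|_C)=\chi_C\cdot h\cdot a$. This is well defined since it depends only on $a|_C$, and it indeed takes values in $L^2$ because $h\in L^2(E,\mu)$ and $a\in L^\infty(E,\mu)$. The only point requiring an argument is injectivity: if $\chi_C h\,a=0$ in $L^2(B,\mu)$, then, since $h(x)\neq 0$ for every $x\in C$, we get $a=0$ $\mu$-a.e.\ on $C$, i.e.\ $a|_C=0$ in $\mathcal{A}_C$. Hence $\Phi$ is injective and
\[
\dim\{\chi_C h\,a : a\in\mathcal{A}\}=\dim\Phi(\mathcal{A}_C)=\dim\mathcal{A}_C=\infty,
\]
which proves the corollary.

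There is no real obstacle here: the statement is essentially a restatement of the preceding lemma after passing to the support of $h$ inside $B$ and using that pointwise multiplication by a function that is a.e.\ nonzero on a set is injective on function spaces over that set. The only mild care needed is in choosing a Borel representative of $h$ so that the set $C$ is honestly defined and $\mu(C)>0$ follows from $\|\chi_B h\|_2\neq 0$.
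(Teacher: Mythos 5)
Your proof is correct and follows essentially the same route as the paper: pass to the subset $C\subset B$ where $h$ is nonzero (which has positive measure by the hypothesis on $\|\chi_B h\|_2$), invoke the preceding lemma to get $\dim\mathcal{A}_C=\infty$, and observe that multiplication by $h$ is injective from $\mathcal{A}_C$ into $\{\chi_B h\,a : a\in\mathcal{A}\}$. The only cosmetic difference is that you make the linear map $\Phi$ and the well-definedness/injectivity checks explicit, whereas the paper states the bijectivity of $\mathcal{A}_C\to h\mathcal{A}_C$ without elaboration.
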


\begin{proof}
Note that the inequality $\| \chi_B h\|_2 \ne 0$ implies that there exists $C \subset B$ such that  $\mu(C)> 0$ and $|h| >0$ almost everywhere on $C$. Using the fact that the map  $ h \mathcal{A}_B \rightarrow h \mathcal{A}_C$ defined by taking the restriction on $C$ is surjective, and the fact that the map $\mathcal{A}_C \rightarrow h \mathcal{A}_C$ is bijective, we obtain 
\[
\dim  ( h \mathcal{A}_B)  \ge \dim( h \mathcal{A}_C)  =  \dim (\mathcal{A}_C) = \infty. 
\]
\end{proof}

\begin{proof}[Proof of Lemma \ref{lem-strict-c}]
By the description of the conditional operator $K^{[\X,B^c]}$, we know that 
$\| K^{[\X,B^c]} \| \leq 1$ for $\PP_{K}$-almost every configuration $\X\in {\rm Conf}(E)$.  Assume by contradiction  that $\| K^{[\X,B^c]} \| = 1$ for all configurations $\X \in \mathcal{B} \subset \Conf(E)$ with $\PP_K(\mathcal{B})>0$. Then for any $\X \in \mathcal{B}$,  since $K^{[\X,B^c]}$ is positive and compact, the spectral theorem implies that  
\[
V_1(K^{[\X,B^c]})\neq \{0\},
\]
By Lemma \ref{lem-module},  $V_1(K^{[\X,B^c]})$ is an $\mathcal{A}$-module. Take any $h \in V_1(K^{[\X,B^c]}) \setminus \{0\}$, the linear space $V_1(K^{[\X,B^c]})$ contains the linear subspace  $h \mathcal{A}_B$. By Corollary \ref{cor-inf}, we get 
\[
\dim V_1(K^{[\X,B^c]}) \ge \dim  (h \mathcal{A}_B) = \infty. 
\]
 That is, the operator $K^{[\X,B^c]}$ admits an infinite dimensional eigen-space corresponding to the eigenvalue $1$. This contradicts the fact that  the conditional operator $K^{[\X,B^c]}$ is compact for $\PP_{K}$-almost every configuration $\X\in {\rm Conf}(E)$.  
\end{proof}

\subsubsection{Proof of Lemma \ref{lem-ext}}\label{sec-extension}

First of all, by Corollary \ref{cor-inf}, we have $\dim(H) = \infty$. In particular, since the reproducing kernel $K$ of $H$ is assumed to be locally trace class, our Polish space $E$ must be non-compact.

Let $B\subset E$ be any relatively compact  Borel subset.   Assume by contradiction that $\|\chi_B K\| =1$. In particular, we must have $\mu(B) > 0$. Note also
\begin{align}\label{n-1}
\| \chi_B K \chi_B \| = \| (\chi_B K ) (\chi_B K)^*\| = \| \chi_B K\|^2 = 1.
\end{align}
Since $K$ is locally trace class, the operator $\chi_B K \chi_B$ is a positive compact operator. Therefore, the equality \eqref{n-1} implies that there exists a non-zero function  $\varphi \in L^2(B)$ such that 
\begin{align}\label{varphi-id}
\chi_B K(\chi_B \varphi)  = \chi_B K(\varphi)= \varphi. 
\end{align}
Denote $\psi : = K(\varphi)$. 
By \eqref{varphi-id}, we have $\varphi =  \chi_B K(\varphi) = \chi_B \psi$. 
Therefore, \eqref{varphi-id} can be rewritten as 
\begin{align}\label{re-written}
\chi_B K  (\chi_B \psi) = \chi_B \psi. 
\end{align}
But $\psi = K(\varphi) \in H$ implies that $K(\psi)  = \psi$ and hence 
\begin{align}\label{psi-id}
\chi_B K(\psi)  = \chi_B \psi.
\end{align}
Combining \eqref{re-written} and \eqref{psi-id}, we obtain 
\begin{align}\label{van-B}
\chi_B K(\chi_{E\setminus B} \psi) = 0. 
\end{align}
Recall that  $\mu(B)>0$, hence the equality \eqref{van-B} combined with the unique extension property of $H$ implies that
\[
K(\chi_{E\setminus B} \psi) =0,
\] 
which is equivalent to $\chi_{E\setminus B} \psi \perp H$. Therefore, we have 
\begin{align}\label{0-prod}
0  = \langle \psi, \chi_{E\setminus B} \psi \rangle  = \int_{E\setminus B} |\psi|^2 d\mu
\end{align}
and hence $\chi_{E\setminus B} \psi =0$. But since $E$ is non-compact and $B$ is relatively compact and the support of $\mu$ is the whole space $E$, we must have $\mu(E\setminus \overline{B}) > 0$.  The unique extension property of $H$ now implies $\psi  = 0$ and hence $\varphi =\chi_B\psi =0$. This contradicts the non-zero assumption on $\varphi$. 

\subsubsection{Module structure of the range of $K^{[\X, B^c]}$}
The following result will be used in the proof of insertion tolerance below. Let
$
 \overline{\Ran(K^{[\X, B^c]})}
$ be
the closure  in $L^2(E, \mu)$ of the range $\Ran(K^{[\X, B^c]})$ of the operator $K^{[\X, B^c]}$. 

\begin{lemma}\label{lem-mod-range} 
Let  $H\subset L^2(E, \mu)$ be a reproducing kernel Hilbert space such that its reproducing kernel $K$ is  locally trace class and satisfies Assumption \ref{ass-K}.  Assume that $H$ is a module over a sub-algebra $\mathcal{A}\subset L^\infty(E,\mu)$. Then for any relatively compact Borel subset $B\subset E$ and $\PP_{K}$-almost every configuration $\X\in {\rm Conf}(E)$,   the space 
 $\overline{\Ran(K^{[\X, B^c]})}$ is an $\mathcal{A}$-module. 
\end{lemma}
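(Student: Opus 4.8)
\noindent\textbf{Proof plan for Lemma \ref{lem-mod-range}.}
The plan is to establish the $\mathcal A$-invariance of $\overline{\Ran(K^{[\X,B^c]})}$ by a direct approximation argument, whose whole point is that the approximating vectors produced by the construction of \S\ref{sec-cond-rep} are \emph{automatically norm-bounded}, being images of a single fixed vector under orthogonal projections. I would fix a relatively compact Borel set $B\subset E$ and $\PP_{K}$-almost every configuration $\X$, retain the notation $E_n$, $F_n$, $P_n:=K^{[\X,E_n]}$ of \S\ref{sec-cond-rep}, and write $Q:=K^{[\X,B^c]}$, viewed (via the Remark after \eqref{operator-con}) as a positive trace class operator on $L^2(B,\mu)$, together with $Q_n:=\chi_B P_n\chi_B$, so that $Q_n\to Q$ in trace norm by \eqref{operator-con}. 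Two preliminary facts will be recorded. First, since $E_n$ is relatively compact, $\chi_{E_n}K$ is strictly contractive by Assumption \ref{ass-K}, hence — exactly as in the proof of Lemma \ref{lem-module} via Lemma \ref{lem-closed}, using $\chi_{B\cup F_n}=\chi_{E\setminus E_n}$ — the subspace $R_n:=\Ran(P_n)=\chi_{B\cup F_n}H(\X\cap E_n)$ is already closed in $L^2(E,\mu)$, and it is an $\mathcal A$-module because $a\cdot(\chi_{B\cup F_n}\phi)=\chi_{B\cup F_n}(a\phi)$ with $a\phi\in H(\X\cap E_n)$ whenever $\phi\in H(\X\cap E_n)$ and $a\in\mathcal A$. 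Second, for $g\in L^2(B,\mu)$, using that $P_n$ is an orthogonal projection and $\chi_B g=g$, one has $\langle Qg,g\rangle=\lim_n\langle Q_n g,g\rangle=\lim_n\|P_n g\|_2^2$; since $Q\ge 0$ this identifies $\Ker Q=\{g\in L^2(B,\mu):\|P_n g\|_2\to 0\}$ (kernel taken in $L^2(B,\mu)$).

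The core step will be to prove $a\cdot\Ran(Q)\subseteq\overline{\Ran(Q)}$ for every $a\in\mathcal A$. Given $h=Qg$ with $g\in L^2(B,\mu)$, I would set $\psi_n:=P_n g\in R_n$; the decisive observation is $\|\psi_n\|_2\le\|g\|_2$ (because $P_n$ is a projection), while $\chi_B\psi_n=Q_n g\to Qg=h$ in $L^2$. Since $R_n$ is an $\mathcal A$-module, $r_n:=a\psi_n\in R_n$, with $\|r_n\|_2\le\|a\|_\infty\|g\|_2=:M$ and $\chi_B r_n=a\,(\chi_B\psi_n)\to ah$ in $L^2$; in particular $ah\in L^2(B,\mu)$. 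As $Q$ is self-adjoint on $L^2(B,\mu)$, $\overline{\Ran(Q)}=(\Ker Q)^{\perp}$ there, so it suffices to check $\langle ah,g'\rangle=0$ for each $g'\in\Ker Q$. For such a $g'$ the second preliminary fact gives $\|P_n g'\|_2\to 0$, while $r_n\in\Ran(P_n)$ gives $\langle\chi_B r_n,g'\rangle=\langle r_n,g'\rangle=\langle r_n,P_n g'\rangle$, so that $|\langle\chi_B r_n,g'\rangle|\le M\|P_n g'\|_2\to 0$; letting $n\to\infty$ yields $\langle ah,g'\rangle=0$, as needed.

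Finally, multiplication by $a\in\mathcal A\subset L^\infty(E,\mu)$ is a bounded, hence continuous, operator on $L^2(E,\mu)$, so the inclusion $a\cdot\Ran(Q)\subseteq\overline{\Ran(Q)}$ upgrades at once to $a\cdot\overline{\Ran(Q)}\subseteq\overline{\Ran(Q)}$, which is precisely the assertion that $\overline{\Ran(K^{[\X,B^c]})}$ is an $\mathcal A$-module. I do not expect any genuine obstacle here; the only point demanding a little care is the assertion in the first preliminary fact that $a\phi\in H(\X\cap E_n)$ — beyond $a\phi\in H$ (the module hypothesis) one needs $a\phi$ to vanish on $\X\cap E_n$, i.e.\ the pointwise product to be the correct reproducing-kernel representative, the same harmless point already used tacitly in the proof of Lemma \ref{lem-module} (for $H=A^2(U,\omega)$ and $\mathcal A=H^\infty(U)$ it simply says that a product of holomorphic functions is holomorphic and vanishes wherever either factor does).
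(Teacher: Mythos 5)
Your proof is correct and is a genuine streamlining of the paper's argument, though it rests on the same two pillars. The paper passes to $\Ker(K^{[\X,B^c]})$, shows it is $\bar{\mathcal A}$-invariant, and then uses $\overline{\Ran}=(\Ker)^\perp$; its two ingredients are an operator-inequality computation (``Claim A'': $\varphi\in\Ker\Rightarrow\|K^{[\X,E_n]}\varphi\|_2\to0$) and the adjoint identity $K^{[\X,E_n]}\bar a=K^{[\X,E_n]}\bar a K^{[\X,E_n]}$ (``Claim B''). You replace Claim A by the quadratic-form identity $\langle Qg,g\rangle=\lim_n\|P_ng\|_2^2$, which together with $Q\ge0$ gives the same characterization of $\Ker Q$ more transparently (and also explains the paper's chain of estimates), and you dispense with Claim B entirely: instead of pushing $\bar a$ through $K^{[\X,E_n]}$ as an operator identity, you track a single vector $\psi_n=P_ng$, exploit that $\|\psi_n\|_2\le\|g\|_2$ is automatically bounded, use the $\mathcal A$-module property of $\Ran(P_n)$ to form $r_n=a\psi_n\in\Ran(P_n)$, and conclude by pairing $r_n$ against elements of $\Ker Q$. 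Both arguments need that $\Ran(P_n)=\chi_{B\cup F_n}H(\X\cap E_n)$ is a closed $\mathcal A$-module; you are right to flag that the implicit step $a\phi\in H(\X\cap E_n)$ (not merely $a\phi\in H$) is used tacitly, exactly as in the paper's proof of Lemma \ref{lem-module}, and is harmless in the concrete holomorphic setting. Net effect: your version trades the operator-algebra manipulations of the paper for a self-contained Hilbert-space estimate, at the cost of no additional hypotheses.
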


\begin{proof}
Since the operator $K^{[\X, B^c]}$ is self-adjoint, we have 
\begin{align}\label{ran-ker}
\overline{\Ran(K^{[\X, B^c]})} =  (\Ker  (K^{[\X, B^c]}) )^\perp, 
\end{align}
where $(\Ker  (K^{[\X, B^c]}) )^\perp$ denotes the orthogonal complement of the null-space of the operator $K^{[\X, B^c]}$, considered here as an operator acting on  $L^2(B, \mu)$.  Therefore, to prove Lemma \ref{lem-mod-range}, it suffices to prove that $\Ker  (K^{[\X, B^c]})$ is an $\bar{\mathcal{A}}$-module. Here $\bar{\mathcal{A}}$ is the complex conjugation of $\mathcal{A}$, that is, $
\bar{\mathcal{A}}: = \{\bar{a}: a \in \mathcal{A} \}$.  Indeed,  if $\Ker  (K^{[\X, B^c]})$ is an $\bar{\mathcal{A}}$-module, then given any $\varphi \in \Ker  (K^{[\X, B^c]})$ and $a \in \mathcal{A}$, we have $\bar{a} \cdot \varphi \in \Ker  (K^{[\X, B^c]})$. Therefore, for any $\psi \in \overline{\Ran(K^{[\X, B^c]})}$ and any $a \in \mathcal{A}$, we have 
\begin{align}\label{verify-ran-ker}
 \forall \varphi   \in  \Ker  (K^{[\X, B^c]}), \quad \langle a   \psi,  \varphi \rangle_{L^2(E, \mu)} = \langle  \psi, \bar{a} \varphi \rangle_{L^2(E, \mu)}  =  0. 
\end{align}
By \eqref{ran-ker}, the relation \eqref{verify-ran-ker} implies the desired relation: $a \psi \in  \overline{\Ran(K^{[\X, B^c]})}$.

\medskip

{\bf Claim A:} If $\varphi \in \Ker(K^{[\X, B^c]}) \subset L^2(B, \mu)$, then 
\begin{align}\label{claim-0}
\lim\limits_{n\to\infty} \|   K^{[\X, E_n]}  \varphi \|_2 = 0.
\end{align}
Indeed, by the following order-inequality of positive operators:
\begin{align*}
\chi_B K^{[\X, E_n]}   \chi_{F_n}  K^{[\X, E_n]}  \chi_B \le \chi_B K^{[\X, E_n]}    \chi_B,
\end{align*}
we have
\begin{multline*}
\|  \chi_{F_n}  K^{[\X, E_n]}  \varphi \|_2^2   =  \|      \chi_{F_n}  K^{[\X, E_n]}  \chi_B \varphi  \|_2^2   =
\\
 = \langle    \chi_B K^{[\X, E_n]}   \chi_{F_n}  K^{[\X, E_n]}  \chi_B \varphi, \varphi \rangle \le
\\
  \le  \langle    \chi_B K^{[\X, E_n]}   \chi_B \varphi, \varphi \rangle  
\le \| \chi_B K^{[\X, E_n]}  \varphi \|_2 \cdot \|  \varphi\|_2 .
\end{multline*}
Consequently, by noting that 
\[
K^{[\X, E_n]}  \varphi  =\chi_{F_n} K^{[\X, E_n]}  \varphi  +\chi_B K^{[\X, E_n]}  \varphi,
\]
we have 
\begin{multline}\label{nice-control}
\|  K^{[\X, E_n]}  \varphi  \|_2^2= \|\chi_{F_n} K^{[\X, E_n]}  \varphi\|_2^2  +\|\chi_B K^{[\X, E_n]}  \varphi\|_2^2 \le
\\
 \le \| \chi_B K^{[\X, E_n]}  \varphi \|_2 \cdot \|  \varphi\|_2 + \|\chi_B K^{[\X, E_n]}  \varphi\|_2^2. 
\end{multline}
But if $\varphi \in \Ker(K^{[\X, B^c]})$, then 
\begin{align}\label{ker-f}
0=  K^{[\X, B^c]}( \varphi)  = \lim\limits_{n\to\infty} \chi_B K^{[\X, E_n]}\chi_B (\varphi) =  \lim\limits_{n\to\infty} \chi_B K^{[\X, E_n]}(\varphi),
\end{align}
where the limit takes place in $L^2(B, \mu)$. Combining \eqref{nice-control} and \eqref{ker-f}, we obtain the desired relation \eqref{claim-0}.

\medskip

{\bf Claim B:}  For any $a \in \mathcal{A}$, we have the following equality of operators on $L^2(E, \mu)$: 
\begin{align}\label{eq-op}
K^{[\X, E_n]}  \bar{a}  =  K^{[\X, E_n]}  \bar{a}  K^{[\X, E_n]},  
\end{align}
where $\bar{a}$ stands for the operator $M_{\bar{a}}$ of multiplication by the bounded function $\bar{a}$.  Indeed, since the range $\chi_{B\cup F_n}H(\X\cap E_n)$ of the orthogonal projection  $K^{[\X, E_n]}$ is by definition an $\mathcal{A}$-module, we have 
\[
a K^{[\X, E_n]}  = K^{[\X, E_n]}  a K^{[\X, E_n]}. 
\]
Now by using the elemetary identity
\[
(M_{\bar{a}})^{*} = M_{a},
\]
where $*$ denotes the operation of taking the operator-adjoint, we obtain the desired equality
\[
K^{[\X, E_n]}  \bar{a}  = (M_a K^{[\X, E_n]} )^{*} = ( K^{[\X, E_n]}  M_a K^{[\X, E_n]})^{*} =  K^{[\X, E_n]}  \bar{a}  K^{[\X, E_n]}. 
\]

Now take any $\varphi \in \Ker  (K^{[\X, B^c]}) \subset L^2(B, \mu)$ and any $a \in \mathcal{A}$. By applying \eqref{claim-0} and  the contractivity of the operator $\chi_B   K^{[\X, E_n]}$,  we have 
\begin{align*}
\| K^{[\X, B^c]} (\bar{a}\varphi) \|_2 &  = \lim\limits_{n\to\infty} \| \chi_B K^{[\X, E_n]}\chi_B ( \bar{a} \varphi) \|_2 & 
\\
& = \lim\limits_{n\to\infty} \| \chi_B K^{[\X, E_n]} ( \bar{a} \varphi) \|_2 & 
\\
&  = \lim\limits_{n\to\infty} \| \chi_B   K^{[\X, E_n]}  \bar{a}  K^{[\X, E_n]} (\varphi)\|_2 \quad   & (\text{by the equality \eqref{eq-op}})
\\
& \le \limsup_{n\to\infty}   \|a \|_\infty \cdot \|  K^{[\X, E_n]} (\varphi)\|_2   & \quad   (\text{by contractivity of $K^{[\X, E_n]}$})
\\
&=0. & \quad (\text{by the equality \eqref{claim-0}})
\end{align*}
Hence $\bar{a} \cdot \varphi \in \Ker(K^{[\X, B^c]})$ and this completes the proof of Lemma \ref{lem-mod-range}. 
\end{proof}

Lemma \ref{lem-mod-range} implies that the space $\overline{\Ran(K^{[\X, B^c]})}$ is either the null space $\{0\}$ or of infinite dimension.  Thus we obtain
\begin{corollary}\label{cor-rank}
If $K^{[\X, B^c]} \ne 0$, then $\rank (K^{[\X, B^c]}) = \infty$. 
\end{corollary}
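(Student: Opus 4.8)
The plan is to read off Corollary \ref{cor-rank} directly from Lemma \ref{lem-mod-range}, combined with the elementary dimension count already carried out inside the proof of Lemma \ref{lem-strict-c} (Corollary \ref{cor-inf} and the lemma preceding it). The only extra input is Assumption \ref{ass-H}, which in the situation of Theorem \ref{thm-palm-eq} holds since $H^\infty(U)$ contains a non-constant function; recall that it yields $\dim \mathcal{A}_C = \infty$ for every Borel set $C$ with $\mu(C) > 0$.

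First I would note that, as observed in the Remark following \eqref{cond-m-f}, one has $K^{[\X, B^c]} = \chi_B K^{[\X, B^c]} \chi_B$, so that $\Ran(K^{[\X, B^c]})$, and hence its closure, lies in the closed subspace $L^2(B, \mu) \subset L^2(E, \mu)$. By Lemma \ref{lem-mod-range}, for $\PP_K$-almost every configuration $\X$ the closed subspace $\overline{\Ran(K^{[\X, B^c]})}$ is an $\mathcal{A}$-module. Suppose now $K^{[\X, B^c]} \ne 0$ and choose $\psi \in \Ran(K^{[\X, B^c]})$ with $\psi \ne 0$. Since $\psi$ is a non-zero element of $L^2(B, \mu)$, there is a Borel set $C \subset B$ with $\mu(C) > 0$ on which $|\psi| > 0$ $\mu$-almost everywhere. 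As $\overline{\Ran(K^{[\X, B^c]})}$ is an $\mathcal{A}$-module containing $\psi$, it contains the entire linear subspace $\{ a \psi : a \in \mathcal{A} \}$.

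Now I would repeat the argument of Corollary \ref{cor-inf} essentially verbatim: restriction to $C$ sends $\{ a \psi : a \in \mathcal{A} \}$ onto $\{ (a|_C)(\psi|_C) : a \in \mathcal{A} \}$, and division by the $\mu$-a.e.\ non-vanishing function $\psi|_C$ identifies this linearly with $\mathcal{A}_C$; note that the proof of Corollary \ref{cor-inf} and of the lemma preceding it uses only $\mu(C) > 0$ and Assumption \ref{ass-H}, never that $\psi$ belongs to $H$. Hence $\dim \overline{\Ran(K^{[\X, B^c]})} \ge \dim \mathcal{A}_C = \infty$. Since a finite-dimensional linear subspace of a Hilbert space is closed, $\Ran(K^{[\X, B^c]})$ cannot be finite-dimensional --- otherwise it would equal its closure, contradicting the previous sentence. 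Therefore $\rank(K^{[\X, B^c]}) = \dim \Ran(K^{[\X, B^c]}) = \infty$.

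I do not expect any real obstacle here: all the substance already sits in Lemma \ref{lem-mod-range}, and the corollary is just the remark that a non-zero $\mathcal{A}$-module (for $\mathcal{A}$ satisfying Assumption \ref{ass-H}) inside $L^2(B,\mu)$ cannot be finite-dimensional. The two small points worth a line of care are: (i) the dimension estimate is first obtained for the \emph{closure} of the range and must be transported back to the range itself via ``finite-dimensional $\Rightarrow$ closed''; and (ii) the counting lemma behind Corollary \ref{cor-inf}, originally phrased for vectors of $H$, is applied here to an arbitrary $\psi \in L^2(B,\mu)$ that is non-zero on a set of positive measure, which is all that is guaranteed for elements of $\Ran(K^{[\X, B^c]})$.
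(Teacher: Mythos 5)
Your proof is correct and takes essentially the same route as the paper, which compresses the same chain --- Lemma \ref{lem-mod-range}, the dimension count under Assumption \ref{ass-H} via Corollary \ref{cor-inf}, and the observation that a finite-dimensional subspace is closed --- into a single sentence immediately after Lemma \ref{lem-mod-range}. Your two flagged points of care (transporting infinite-dimensionality from the closure back to the range, and that the counting argument only needs a non-zero element of $L^2(B,\mu)$ rather than a vector of $H$) are precisely the details the paper leaves implicit.
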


\subsection{Proof of Proposition \ref{prop-del-ab}} \label{palm-abs}

Let $\PP_K$ be a determinantal point process  on a Polish space $E$ induced by a locally trace class positive contraction $K$ on $L^2(E, \mu)$. Then for $K(x, x) d\mu$-almost every $p \in E$, the reduced Palm measure $\PP_K^{p}$ of $\PP_K$ corresponding to the position $p$ is the determinantal point process induced by the kernel 
\[
K^p(x, y) = K(x, y) - \frac{K(x, p) K(p, y)}{K(p, p)}. 
\]

The operator-order inequality $K^p \le K$ then follows immediately. By iteration, for $\det(K(x_i, x_j)) d\mu^{\otimes \ell}$-almost every $ \mathfrak{p} = (p_1, \cdots, p_\ell)$, we obtain the operator-order inequality $K^{\mathfrak{p}} \le K$ and in particular
\begin{align}\label{p-diag}
K(x, x) - K^{\mathfrak{p}}(x, x) \ge 0.
\end{align}
  The operator-order inequality $K^{\mathfrak{p}} \le K$ implies that there exists a monotone coupling between $\PP_K^{\mathfrak{p}}$ and $\PP_K$, cf. \S \ref{sec-mc}. Note that the positive operator $K- K^{\mathfrak{p}}$ has finite rank and finite trace:
\begin{align}\label{f-trace}
\tr(K-K^{\mathfrak{p}})< \infty.
\end{align}

We shall use an equivalent formulation of deletion tolerance due to  Holroyd and Soo. In  \cite[Theorem 1.1]{HolSoo} Holroyd and Soo showed that 
a point process $\PP$ on $E$ is deletion tolerant if and only if for any two coupled random configurations $\mathscr{X}$  and $\mathscr{Z}$ on $E$ defined on a common probability space $(\Omega, \mathcal{B}, \bold{P})$ and satisfying
\[
\text{$\mathcal{L} (\mathscr{X}) = \PP$  and  $\mathscr{Z}\subset \mathscr{X}$ \text{$\bold{P}$-almost surely} and $\#\mathscr{Z} < \infty$ \text{$\bold{P}$-almost surely}, }
\]  
we have
\begin{align}\label{f-del}
\mathcal{L} ( \mathscr{X} \setminus \mathscr{Z}) \ll \mathcal{L} ( \mathscr{X}).
\end{align}

\begin{lemma}\label{lem-Claim-A}
For two coupled random configurations $\mathscr{X}$ and $\mathscr{Y}$ on $E$ defined on the same probability space  $(\Omega, \mathcal{B}, \bold{P})$ and satisfying 
\begin{align*}
\mathcal{L}(\mathscr{X}) = \PP_{K}, \, \mathcal{L}(\mathscr{Y}) = \PP_{K}^{\mathfrak{p}}\an \mathscr{Y}\subset \mathscr{X} \quad \text{$\bold{P}$-almost surely,}
\end{align*}
we have
\[
\# [ \mathscr{X} \setminus \mathscr{Y}]< \infty, \text{$\bold{P}$-almost surely.}
\]
\end{lemma}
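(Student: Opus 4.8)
The plan is to bound $\E\big[\#[\mathscr{X}\setminus\mathscr{Y}]\big]$ by $\tr(K-K^{\mathfrak{p}})$ and invoke the finite-trace estimate \eqref{f-trace}. First I would fix a relatively compact Borel set $B\subset E$. Since both $\PP_K$ and $\PP_K^{\mathfrak{p}}=\PP_{K^{\mathfrak{p}}}$ are determinantal, their first correlation measures are $K(x,x)\,d\mu(x)$ and $K^{\mathfrak{p}}(x,x)\,d\mu(x)$ respectively, hence
\[
\E\big[\#(\mathscr{X}\cap B)\big]=\int_B K(x,x)\,d\mu(x)<\infty,\qquad \E\big[\#(\mathscr{Y}\cap B)\big]=\int_B K^{\mathfrak{p}}(x,x)\,d\mu(x)<\infty,
\]
the finiteness coming from the fact that $K$ is locally trace class and $0\le K^{\mathfrak{p}}\le K$. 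Because $\mathscr{Y}\subset\mathscr{X}$ $\bold{P}$-almost surely and the configurations are simple, on the relatively compact set $B$ one has the $\bold{P}$-almost sure identity
\[
\#\big((\mathscr{X}\setminus\mathscr{Y})\cap B\big)=\#(\mathscr{X}\cap B)-\#(\mathscr{Y}\cap B),
\]
a difference of two finite nonnegative integers.

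Taking expectations and using \eqref{p-diag},
\[
\E\big[\#\big((\mathscr{X}\setminus\mathscr{Y})\cap B\big)\big]=\int_B\big(K(x,x)-K^{\mathfrak{p}}(x,x)\big)\,d\mu(x).
\]
I would then pick an exhaustion $B_1\subset B_2\subset\cdots$ of $E$ by relatively compact Borel sets with $\bigcup_n B_n=E$, which exists because a locally compact Polish space is $\sigma$-compact, and apply the monotone convergence theorem to both sides (the integrand on the right is nonnegative by \eqref{p-diag}) to obtain
\[
\E\big[\#[\mathscr{X}\setminus\mathscr{Y}]\big]=\int_E\big(K(x,x)-K^{\mathfrak{p}}(x,x)\big)\,d\mu(x).
\]
Finally, since $K-K^{\mathfrak{p}}$ is a positive finite-rank operator, the integral of its kernel over the diagonal equals its trace, so by \eqref{f-trace} the right-hand side equals $\tr(K-K^{\mathfrak{p}})<\infty$. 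Hence $\E\big[\#[\mathscr{X}\setminus\mathscr{Y}]\big]<\infty$, and in particular $\#[\mathscr{X}\setminus\mathscr{Y}]<\infty$ $\bold{P}$-almost surely.

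I expect the only point requiring care to be the justification of the counting identity $\#\big((\mathscr{X}\setminus\mathscr{Y})\cap B\big)=\#(\mathscr{X}\cap B)-\#(\mathscr{Y}\cap B)$: this uses the almost sure inclusion $\mathscr{Y}\subset\mathscr{X}$, the simplicity of the two processes so that removing $\mathscr{Y}$ from $\mathscr{X}$ is an honest set difference with no multiplicity issues, and local finiteness so that both counts are finite on the relatively compact set $B$. The remaining ingredient, the identification $\int_E\big(K(x,x)-K^{\mathfrak{p}}(x,x)\big)\,d\mu=\tr(K-K^{\mathfrak{p}})$, is immediate here because $K-K^{\mathfrak{p}}$ has finite rank; everything else is routine measure theory.
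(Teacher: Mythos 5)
Your proof is correct and follows essentially the same route as the paper: bound $\E\#[\mathscr{X}\setminus\mathscr{Y}]$ by writing the expected count on a relatively compact set as $\int_B\big(K(x,x)-K^{\mathfrak{p}}(x,x)\big)\,d\mu$ via the first correlation measure and the a.s.\ inclusion $\mathscr{Y}\subset\mathscr{X}$, then pass to the limit along an exhaustion by monotone convergence and identify the limit with $\tr(K-K^{\mathfrak{p}})<\infty$. The only cosmetic difference is that you spell out the counting identity and the finite-rank trace computation a bit more explicitly than the paper does.
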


\begin{proof}
Indeed, denote $\mathscr{Z}  : = \mathscr{X} \setminus \mathscr{Y}$. It suffices to show that $\E\#\mathscr{Z} < \infty$.  Fix any exhausting sequence of relatively compact subsets $B_1 \subset \cdots \subset B_n \subset \cdots \subset E$.  By \eqref{p-diag} and \eqref{f-trace},  we have 
\begin{multline*}
\E [ \#(\mathscr{X}|_{B_n})-  \#(\mathscr{Y}|_{B_n})]  = \int_{B_n} ( K(z, z) - K^{\mathfrak{p}}(z, z))  d\mu(z)\le
\\
 \le \int_{E} ( K(z, z) - K^{\mathfrak{p}}(z,z))  d\mu(z)
= \tr (K - K^{\mathfrak{p}})< \infty. 
\end{multline*}
We have a monotone increasing sequence $(\#(\mathscr{Z}|_{B_n}))_{n=1}^\infty$ which converges to $\#\mathscr{Z}$. Therefore, by monotone convergence theorem,  we get the desired inequality
\[
\E \#\mathscr{Z} = \lim\limits_{n\to\infty}  \E \#(\mathscr{Z}|_{B_n}) =   \lim\limits_{n\to\infty}  \E [ \#(\mathscr{X}|_{B_n})-  \#(\mathscr{Y}|_{B_n})] \le  \tr (K - K^{\mathfrak{p}}) < \infty. 
\]
\end{proof}

  By the existence of the monotone coupling  of $\PP_{K}^{\mathfrak{p}}$ and $\PP_{K}$, let $\mathscr{X}$ and $\mathscr{Y}$ be  two coupled random configurations on $E$ defined on the same probability space $(\Omega, \mathcal{B}, \bold{P})$,   such that 
\begin{align*}
\mathcal{L}(\mathscr{X}) = \PP_{K}, \, \mathcal{L}(\mathscr{Y}) = \PP_{K}^{\mathfrak{p}}\an \mathscr{Y}\subset \mathscr{X} \quad \text{$\bold{P}$-almost surely.}
\end{align*}
By Lemma \ref{lem-Claim-A}, the  random configuration $\mathscr{Z} = \mathscr{X}\setminus \mathscr{Y}$ is almost surely finite. Therefore, under the hypothesis  of  Proposition \ref{prop-del-ab} that the determinantal point process $\PP_{K}$ is deletion tolerant,  we can apply the relation \eqref{f-del} and obtain the desired relation: 
\[
\PP_{K}^{\mathfrak{p}} = \mathcal{L}(\mathscr{Y}) = \mathcal{L}(\mathscr{X} \setminus \mathscr{Z}) \ll \mathcal{L}(\mathscr{X})  = \PP_{K}. 
\]

\section{Proof of the relation $\PP_{K_\omega} \ll \PP_{K_\omega}^{\mathfrak{p}}$}\label{sec-dif-direction}

\subsection{Insertion tolerance}

Fix any $\ell \in \N$ and any $\ell$-tuple  $\mathfrak{p} = (p_1, \cdots, p_\ell) \in U^\ell$ of  distinct points  in $U$. We start the proof of the relation $\PP_{K_\omega} \ll \PP_{K_\omega}^{\mathfrak{p}}$ by establishing, for our 
reduced Palm measure $\PP_{K_\omega}^{\mathfrak{p}}$, the property of insertion tolerance whose definition we now recall.

Recall that given two point processes $\PP_1$ and $\PP_2$ on $U$, their convolution $\PP_1 * \PP_2$ is defined by 
\[
\PP_1*\PP_2 = \mathcal{L} ( \mathscr{X}_1 \cup \mathscr{X}_2), 
\]  
with $\mathscr{X}_1, \mathscr{X}_2$  random configurations independently sampled with $\PP_1, \PP_2$ respectively.

\begin{definition}[Holroyd and Soo \cite{HolSoo}]\label{def-ins}
A point process $\PP$ on $U$ is called insertion tolerant if for any relatively compact $S \subset U$ with non-zero Lebesgue measure, 
\[
\PP * \delta_{\Unif(S)} \ll \PP,
\]
where $\delta_{\Unif(S)}$ denotes the point process of one single random particle with uniform distribution $\Unif(S)$ on $S$. 
\end{definition}
\begin{remark}
In the above definition of insertion tolerance, it suffices to consider  open connected relatively compact subsets $S\subset U$. Indeed,  any relatively compact $S\subset U$ is contained in a relatively compact, open connected subset $\widetilde{S} \subset U$, we have $\Unif(S) \ll \Unif(\widetilde{S})$, and the relation $\PP * \delta_{\Unif(\widetilde{S})} \ll \PP$ implies $\PP * \delta_{\Unif(S)} \ll \PP$. 
\end{remark}
\begin{lemma}\label{lem-ins-palm}
The determinantal point process $\PP_{K_\omega}$ and the determinantal point process $\PP_{K_\omega}^{\mathfrak{p}}$ are both insertion tolerant.
\end{lemma}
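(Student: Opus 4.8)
The plan is to deduce insertion tolerance from the existence of strictly positive Janossy densities for the conditional measures. Write $\PP$ for either $\PP_{K_\omega}$ or $\PP_{K_\omega}^{\mathfrak{p}}$; in the second case the underlying reproducing kernel Hilbert space is $A^2(U,\omega;\mathfrak{p})$, which is a nonzero closed $H^\infty(U)$-submodule of $A^2(U,\omega)$ (if $\varphi(p_i)=0$ then $(g\varphi)(p_i)=0$ for $g\in H^\infty(U)$) with the unique extension property, so Lemma \ref{lem-strict-c}, Lemma \ref{lem-mod-range} and Corollary \ref{cor-rank} apply without change; the case $\ell=0$ is $\PP_{K_\omega}$ itself. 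Fix an open connected relatively compact $S\subset U$ with $V(S)>0$; by the Remark following Definition \ref{def-ins} it suffices to treat such $S$. Adding a uniform particle in $S$ does not change the restriction of a configuration to $S^{c}$, so the disintegration \eqref{f-dis-int} (together with measurability of $\X|_{S^{c}}\mapsto K_\omega^{[\X,S^{c}]}$) reduces $\PP*\delta_{\Unif(S)}\ll\PP$ to the assertion that, for $\PP$-almost every $\X$, the trace class positive contraction $L:=K_\omega^{[\X,S^{c}]}$ (resp.\ $(K_\omega^{\mathfrak{p}})^{[\X,S^{c}]}$) on $L^{2}(S,\omega dV)$ satisfies $\PP_{L}*\delta_{\Unif(S)}\ll\PP_{L}$.

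I would then record three properties of $L$, valid for $\PP$-almost every $\X$. First, by Lemma \ref{lem-strict-c}, $L$ is strictly contractive, so $\|L\|<1$ and $\det(1-L)>0$. Second, the kernel $L(z,w)$ is real analytic on $S\times S$: this is Lemma \ref{lem-real-anal}, whose proof proceeds through the approximating projections $K_\omega^{[\X,E_{n}]}$ and, as stressed in Remark \ref{real-complex-rem}, relies essentially on the \emph{complex} analyticity of the functions in the Bergman space. This is the main obstacle in the whole argument. Third, $\rank L=\infty$: the operator $L$ is nonzero for $\PP$-almost every $\X$ --- this is the number insertion tolerance of Lemma \ref{lem-num-ins}, which follows, as in Proposition \ref{prop-anal}, from the real analyticity of the conditional (equivalently Palm) kernels together with the fact that a nonzero $H^\infty(U)$-submodule is infinite-dimensional (Corollary \ref{cor-inf}): were $L$ to vanish, the identity principle on the connected set $S\times S$ would force the relevant kernel to have finite rank, a contradiction; and once $L\ne 0$, Corollary \ref{cor-rank} gives $\rank L=\infty$.

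With these three properties in hand, $\PP_{L}$ is supported on finite subsets of $S$ and its Janossy density on the $n$-particle stratum equals
\[
j_{n}(x_{1},\dots,x_{n})=\det(1-L)\cdot\det\big[J(x_{i},x_{j})\big]_{i,j=1}^{n},\qquad J:=L(1-L)^{-1}=\sum_{k\ge 1}L^{k},
\]
up to the usual combinatorial normalization. Since $\|L\|<1$ and $L$ is trace class, $J$ is again a trace class positive contraction with $\Ran J=\Ran L$, hence $\rank J=\infty$, and its kernel $J(z,w)=\sum_{k\ge 1}L^{(k)}(z,w)$ is real analytic on $S\times S$ (each iterated kernel $L^{(k)}$ is real analytic and the series converges locally uniformly, thanks to $\|L\|<1$). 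Therefore $(x_{1},\dots,x_{n})\mapsto\det[J(x_{i},x_{j})]$ is real analytic on the connected manifold $S^{n}$; it is nonnegative, being a principal minor of the positive-definite kernel $J$, and it is not identically zero because $\rank J=\infty$: the sections $\{J(x,\cdot):x\in S\}$ span a dense subspace of the reproducing kernel Hilbert space $\mathcal{H}_{J}$ with kernel $J$, so some $n$ of them, at necessarily distinct points $x_{1},\dots,x_{n}$, are linearly independent, whence $\det[J(x_{i},x_{j})]>0$ there. By the identity principle, $\det[J(x_{i},x_{j})]$, and hence $j_{n}$, is strictly positive off a Lebesgue-null subset of $S^{n}$.

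Finally, $\PP_{L}$ charges every $n$-particle stratum (the particle number in $S$ is a sum of independent Bernoulli variables whose parameters are the eigenvalues of $L$, which are summable and infinitely many of which are positive, so every value in $\{0,1,2,\dots\}$ occurs with positive probability), and on each stratum it admits the density $j_{n}$ that is strictly positive Lebesgue-almost everywhere, while $\PP_{L}*\delta_{\Unif(S)}$ is absolutely continuous with respect to the compound Lebesgue measure on $\Conf(S)$. Consequently any $\PP_{L}$-null set of configurations is Lebesgue-null on each stratum, hence $\PP_{L}*\delta_{\Unif(S)}$-null, which gives $\PP_{L}*\delta_{\Unif(S)}\ll\PP_{L}$. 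Undoing the disintegration yields $\PP*\delta_{\Unif(S)}\ll\PP$, proving insertion tolerance of both $\PP_{K_\omega}$ and $\PP_{K_\omega}^{\mathfrak{p}}$. (This is the route taken in the paper via Lemmata \ref{lem-nice} and \ref{lem-abs}, which derive insertion tolerance from number insertion tolerance using real analyticity of the conditional kernels.)
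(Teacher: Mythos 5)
Your proposal is correct and follows essentially the same route as the paper: you assemble the same three ingredients (strict contractivity of the conditional kernel from Lemma~\ref{lem-strict-c}, real analyticity from Lemma~\ref{lem-real-anal}, and infinite rank from Lemma~\ref{lem-num-ins} together with Corollary~\ref{cor-rank}), reduce via the disintegration to a statement about the conditional kernel on $S$, and then conclude by showing each particle-number stratum carries a real-analytic, not-identically-zero, hence a.e.-strictly-positive density, which is also what Lemmata~\ref{lem-nice} and~\ref{lem-abs} do.

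The one genuine (if cosmetic) divergence is in how the stratum-wise density is produced. You write it directly as a Janossy density $j_n = \det(1-L)\det[J(x_i,x_j)]$ with $J=L(1-L)^{-1}$, arguing analyticity of $J(z,w)$ and non-vanishing of the Gram minor from $\rank J=\infty$. The paper instead invokes the Hough--Krishnapur--Peres--Vir\'ag decomposition (Theorem~\ref{thm-HKPV}) to represent $\PP_L$ conditioned on $n$ points as a countable mixture of rank-$n$ projection determinantal measures $\PP_{K_\alpha}$, each of which visibly has a real-analytic, not-identically-zero density (equation~\eqref{n-rank}). Your Janossy route is slightly more direct and avoids explicitly summing over the Bernoulli coding; the HKPV mixture, on the other hand, simultaneously handles positivity of $\PP_L(\#_S=n)$ and the density statement in one shot, and sidesteps the need to argue that the series $\sum_{k\ge1}L^{(k)}(z,w)$ for $J$ is again real analytic (a point you pass over with a brief appeal to $\|L\|<1$ --- it really needs the uniform eigenfunction bound from Lemma~\ref{lem-krantz}, as in the proof of Lemma~\ref{lem-real-anal}, applied to the spectral expansion $\sum_j \tfrac{\lambda_j}{1-\lambda_j}\varphi_j(z)\overline{\varphi_j(w)}$). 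One small slip: $J=L(1-L)^{-1}$ need not be a contraction unless $\|L\|<1/2$; but that is immaterial, since all that is used is that $J$ is trace class, positive, and has a real-analytic kernel. Both routes are correct and of comparable length.
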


The proof of Lemma \ref{lem-ins-palm} is postponed till \S \ref{sec-ins-tl}. We now proceed to the derivation  of $\PP_{K_\omega} \ll \PP_{K_\omega}^{\mathfrak{p}}$ from Lemma \ref{lem-ins-palm}.  We shall need an equivalent formulation of insertion tolerance due to Holroyd and Soo. 

First denote $\Conf_{f}(U)\subset \Conf(U)$ the subset of finite configurations on $U$, that is,  
\begin{align*}
\Conf_{f}(U): = \{\X \in \Conf(U)| \# \X < \infty\}.
\end{align*}
 A point process $\PP$ on $U$ is called finite, if it is supported on the set of finite configurations:  $\PP(\Conf_f(U))  =1$. The space $\Conf_f(U)$ is equipped with a $\sigma$-finite measure 
\begin{align*}
\sigma_U: = \sum_{n=0}^\infty  \sigma_n = \delta_{\emptyset} + \sum_{n=1}^\infty  \pi^{(n)}_{*}((dV|_U)^{\otimes n}),
\end{align*}
where $\sigma_0 = \delta_\emptyset$ is the Dirac measure on the empty configuration and  $\sigma_n= \pi^{(n)}_{*}((dV|_U)^{\otimes n})$ denotes the image measure of the Lebesgue measure on $U^n$ under the natural map $\pi^{(n)}: U^n \rightarrow \Conf(U)$  defined by $
\pi^{(n)} ( (x_1, \cdots, x_n)) = \{x_1, \cdots, x_n\}$. Note that $\sigma_n$'s are all mutually singular. 

\begin{definition}\label{def-nice} 
A finite point process $\PP$ on $U$ is called {\bf diffusive} if $\PP \ll  \sigma_U$. 
\end{definition}

Holroyd and Soo \cite[Corollary 3.2]{HolSoo} showed that 
a point process $\PP$ on $U$ is insertioin tolerant if  and only if for any coupled random configurations $\mathscr{Y}$  and $\mathscr{Z}$ on $U$  such that 
\[
\mathcal{L}(\mathscr{Y}) = \PP \an \#\mathscr{Z} < \infty
\]
and the conditional measure $\mathcal{L}(\mathscr{Z}| \mathscr{Y} = \mathfrak{Y})$
is {\it diffusive} for $\PP$-almost every $\mathfrak{Y}$, one has
\begin{align}\label{f-ins}
\mathcal{L}( \mathscr{Y}\cup \mathscr{Z}) \ll \mathcal{L}(\mathscr{Y}) = \PP.
\end{align}

\begin{lemma}\label{lem-re-f}
Let $\mathscr{X}$ and $\mathscr{Y}$  be any two coupled random configurations on $U$ defined on  a common probability space  $(\Omega, \mathcal{B}, \bold{P})$,   such that 
\begin{align*}
\mathcal{L}(\mathscr{X}) = \PP_{K_\omega}, \, \mathcal{L}(\mathscr{Y}) = \PP_{K_\omega}^{\mathfrak{p}}\an \mathscr{Y}\subset \mathscr{X} \quad \text{$\bold{P}$-almost surely.}
\end{align*}
 Then  the conditional law $\mathcal{L}(\mathscr{X}\setminus \mathscr{Y}| \mathscr{Y} = \mathfrak{Y})$ is  diffusive for $\PP_{K_\omega}^{\mathfrak{p}}$-almost every $\mathfrak{Y}$.  
\end{lemma}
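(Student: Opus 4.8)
The plan is to use the explicit description of conditional measures for finite-rank Palm perturbations together with the real analyticity of the conditional kernels. Fix a monotone coupling $(\mathscr{X}, \mathscr{Y})$ as in the statement. The first step is to reduce the diffusivity of $\mathcal{L}(\mathscr{X} \setminus \mathscr{Y} \mid \mathscr{Y} = \mathfrak{Y})$ to a statement about conditional measures of $\PP_{K_\omega}$: since $\mathscr{Y} \subset \mathscr{X}$ and $\#(\mathscr{X} \setminus \mathscr{Y}) < \infty$ almost surely (by Lemma~\ref{lem-Claim-A}), one expects that for a fixed realization $\mathfrak{Y}$ of $\mathscr{Y}$, the conditional law of the difference configuration is absolutely continuous with respect to the conditional law $\PP_{K_\omega}(\cdot \mid \mathfrak{Y}, \text{supp})$ of $\PP_{K_\omega}$ given that it contains $\mathfrak{Y}$. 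The key point here is a disintegration argument: the monotone coupling, when disintegrated over $\mathscr{Y}$, produces on each fibre a measure on configurations containing $\mathfrak{Y}$, which is absolutely continuous with respect to $\PP_{K_\omega}$ conditioned to contain $\mathfrak{Y}$ — this uses that $\mathcal{L}(\mathscr{X}) = \PP_{K_\omega}$ and the marginal on $\mathscr{Y}$ is $\PP_{K_\omega}^{\mathfrak{p}}$, so the difference $\mathscr{X}\setminus\mathscr{Y}$ on each fibre sits inside a conditional version of $\PP_{K_\omega}$.

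The second, and main, step is to show that the relevant conditional measures of $\PP_{K_\omega}$ — namely the conditional distribution of the particles that get deleted when passing to $\mathscr{Y}$, restricted to any bounded Borel set $B$ — are diffusive, i.e.\ absolutely continuous with respect to the compound Lebesgue measure $\sigma_B$. This is where one invokes the real analyticity of the conditional kernels established in Lemma~\ref{lem-real-anal}, combined with the structure results of \S\ref{sec-trace} for trace class kernels: the conditional kernel $K_\omega^{[\X, B^c]}$ is real analytic on $B \times B$, hence its correlation functions are real analytic and in particular absolutely continuous with respect to Lebesgue measure on all Cartesian powers; a real analytic kernel that is not identically zero produces a point process none of whose correlation functions vanish on any open set, so the induced finite point process has a density with respect to $\sigma_B$. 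Here one must be careful to handle the number of deleted points correctly: since $\#(\mathscr{X}\setminus\mathscr{Y}) \le \ell$ is bounded (the Palm perturbation removes at most $\ell$ particles' worth of ``mass''), one works on the finite disjoint union $\bigsqcup_{n=0}^{N} B^n$ for suitable $N$, and shows the conditional law restricted to each stratum has a density.

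The third step is to assemble these into the global statement: diffusivity of $\mathcal{L}(\mathscr{X}\setminus\mathscr{Y}\mid \mathscr{Y}=\mathfrak{Y})$ as a measure on $\Conf_f(U)$ means absolute continuity with respect to $\sigma_U$, and since any finite configuration lives in some bounded $B$, it suffices to check absolute continuity on each bounded piece, which is what the previous step provides. One also needs the null-set exceptions (the set of $\mathfrak{Y}$ for which the conditional kernel fails to exist or fails to be real analytic) to be $\PP_{K_\omega}^{\mathfrak{p}}$-negligible; this follows because $\PP_{K_\omega}^{\mathfrak{p}} = \PP_{K_\omega^{\mathfrak{p}}}$ is itself a determinantal measure governed by the reproducing kernel of the Bergman-type space $A^2(U,\omega;\mathfrak{p})$, so Lemma~\ref{lem-real-anal} applies verbatim to it.

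I expect the main obstacle to be the first step — making the disintegration of an \emph{arbitrary} monotone coupling rigorous. The monotone coupling is not explicit (as emphasized in \S\ref{sec-mc}), so one cannot compute with it directly; instead one has to argue structurally that, whatever the coupling is, conditioning on $\mathscr{Y}=\mathfrak{Y}$ and then looking at $\mathscr{X}$ yields a measure dominated by the conditional version of $\PP_{K_\omega}$ that ``contains $\mathfrak{Y}$''. This requires a clean version of the fact that if $\mathscr{Y}\subset\mathscr{X}$ and $\mathcal{L}(\mathscr{X})=\PP_{K_\omega}$, then $\mathcal{L}(\mathscr{X}\mid\mathscr{Y}=\mathfrak{Y})$ is absolutely continuous with respect to $\PP_{K_\omega}$ conditioned to contain the finite configuration $\mathfrak{Y}$ — which in turn is exactly a reduced Palm measure shifted back by $\mathfrak{Y}$, and hence (being absolutely continuous with respect to $\PP_{K_\omega}$ by the already-proven direction, or directly determinantal with a real analytic kernel) has the diffusivity we need. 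Threading the measurability and the almost-everywhere qualifiers through this chain is the delicate part; the real analyticity input, by contrast, is a direct application of the lemmas already set up.
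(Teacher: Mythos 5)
You correctly identify the two ingredients (a disintegration step and a real-analyticity step), and your handling of the second ingredient (Lemmata \ref{lem-real-anal} and \ref{lem-abs}) is on track. The real gap is in the first step, and it is not a matter of ``threading measurability'' — the idea you propose for it does not work.

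Your plan is to argue that $\mathcal{L}(\mathscr{X}\mid\mathscr{Y}=\mathfrak{Y})$ is absolutely continuous with respect to ``$\PP_{K_\omega}$ conditioned to contain $\mathfrak{Y}$,'' which you then identify with a reduced Palm measure. This fails for two reasons. First, $\mathfrak{Y}$ here is a sample from $\PP_{K_\omega}^{\mathfrak{p}}$ and is therefore generically an \emph{infinite} configuration; only $\mathscr{X}\setminus\mathscr{Y}$ is finite (Lemma \ref{lem-Claim-A}). So ``$\PP_{K_\omega}$ conditioned to contain $\mathfrak{Y}$'' is not a reduced Palm measure of $\PP_{K_\omega}$ and is not an obviously meaningful object. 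Second, and more fundamentally, the monotone coupling is arbitrary: its disintegration over $\mathscr{Y}$ need not align with any particular disintegration of $\PP_{K_\omega}$. The identity $\PP_{K_\omega}=\int\mathcal{L}(\mathscr{X}\mid\mathscr{Y}=\mathfrak{Y})\,\PP_{K_\omega}^{\mathfrak{p}}(d\mathfrak{Y})$ controls only an integrated quantity; a priori, individual fibre measures could carry singular parts that cancel upon integration. There is no general principle forcing the fibre measure to be dominated by a Palm-type conditioning of $\PP_{K_\omega}$.

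The paper's resolution is the dyadic-cube construction of Lemma \ref{lem-S}, which is the idea your proposal is missing. Because $\#(\mathscr{X}\setminus\mathscr{Y})<\infty$ almost surely and the particles of $\mathscr{X}\setminus\mathscr{Y}$ have positive distance from $\mathscr{Y}$, one can measurably build a random set $\bold{S}$ that is a finite union of dyadic cubes, takes values in a \emph{countable} collection $\mathcal{C}$, and satisfies $\mathscr{X}|_{\bold{S}}=\mathscr{X}\setminus\mathscr{Y}$ almost surely. Conditioning on $\bold{S}=C$ turns the measure-zero conditioning on $\mathscr{Y}=\mathfrak{Y}$ into a countable decomposition. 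Crucially, for each fixed $C\in\mathcal{C}$, the conditional law $\mathcal{L}(\mathscr{X}|_C\mid\mathscr{X}|_{U\setminus C}=\mathfrak{Y},\,\#\mathscr{X}|_C=n)$ depends \emph{only} on $\mathcal{L}(\mathscr{X})=\PP_{K_\omega}$ — it is $\PP_{K_\omega^{[\mathfrak{Y},U\setminus C]}}(\cdot\mid\#_C=n)$ from \S\ref{sec-cond-rep} — and is thus entirely decoupled from the unknown coupling; it is also, via \eqref{summand-abs}, a dominating measure for the summand that does retain the extra conditioning $\bold{S}=C$. Only at that point does the real-analyticity machinery enter, exactly as you anticipate. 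Without the dyadic-cube trick the whole reduction to conditional measures of $\PP_{K_\omega}$ alone is unavailable, and your first step remains unproved.
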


The proof of Lemma \ref{lem-re-f} is postponed till \S \ref{sec-diff} and we  continue the derivation of the relation  $\PP_{K_\omega} \ll \PP_{K_\omega}^{\mathfrak{p}}$ from Lemmata \ref{lem-ins-palm} and \ref{lem-re-f}. By the existence of the monotone coupling  of $\PP_{K_\omega}^{\mathfrak{p}}$ and $\PP_{K_\omega}$, there exist two coupled random configurations $\mathscr{X}$ and $\mathscr{Y}$ on $U$ defined on the same probability space $(\Omega, \mathcal{B}, \bold{P})$,   such that 
\begin{align*}
\mathcal{L}(\mathscr{X}) = \PP_{K_\omega}, \, \mathcal{L}(\mathscr{Y}) = \PP_{K_\omega}^{\mathfrak{p}}\an \mathscr{Y}\subset \mathscr{X} \quad \text{$\bold{P}$-almost surely.}
\end{align*}
By Lemma \ref{lem-re-f},  the conditional law 
\[
\mathcal{L}(\mathscr{X}\setminus \mathscr{Y}| \mathscr{Y} = \mathfrak{Y})
\]
is diffusive for $\PP_{K_\omega}^{\mathfrak{p}}$-almost every $\mathfrak{Y}$. By Lemma \ref{lem-ins-palm}, the reduced Palm measure $\PP_{K_\omega}^{\mathfrak{p}} = \mathcal{L}(\mathscr{Y})$ is insertion tolerant. Therefore, we can add measurably a conditionally diffusive random finite set $\mathscr{X}\setminus \mathscr{Y}$ (see Lemma \ref{lem-Claim-A} for the finiteness) to  the random configuration $\mathscr{Y}$  and obtain the desired relation: 
\[
\PP_{K_\omega}= \mathcal{L}(\mathscr{X}) = \mathcal{L}(\mathscr{Y}  \cup (\mathscr{X}\setminus \mathscr{Y}) ) \ll \mathcal{L}(\mathscr{Y})  = \PP_{K_\omega}^{\mathfrak{p}}. 
\]

\begin{remark}
Note that in the above derivation  of $\PP_{K_\omega} \ll \PP_{K_\omega}^{\mathfrak{p}}$,  we only use the insertion tolerance of $\PP_{K_\omega}^{\mathfrak{p}}$ and  do not use the insertion tolerance of $\PP_{K_\omega}$. 
\end{remark}

\subsection{Number insertion tolerance of   $\PP_{K_\omega}$ and $\PP_{K_\omega}^{\mathfrak{p}}$}

One important intermediate step for proving  the insertion tolerance of  the determinantal point processes  $\PP_{K_\omega}$ and $\PP_{K_\omega}^{\mathfrak{p}}$   is to prove their {\it number insertion tolerance}, whose definition is given as follows.   

\begin{definition}\label{def-num-ins}
Given a point process $\PP$ on $U$, we say that  $\PP$ is {\bf number insertion tolerant} if for any relatively compact $B\subset U$ with positive Lebesgue measure, we have
 \begin{align*}
\PP(\#_{B}>0|\X,B^{c})>0 \quad \text{for $\PP$-almost every $\X$.}
\end{align*}
\end{definition}

\begin{lemma}\label{lem-num-ins}
The determinantal point process $\PP_{K_\omega}$ and the determinantal point process $\PP_{K_\omega}^{\mathfrak{p}}$ are both number insertion tolerant.
\end{lemma}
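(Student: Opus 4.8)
The plan is to prove number insertion tolerance for both $\PP_{K_\omega}$ and $\PP_{K_\omega}^{\mathfrak{p}}$ by passing to the dual statement about conditional kernels and combining it with a general real-analyticity criterion (Proposition \ref{prop-anal}). Fix a relatively compact $B \subset U$ with $V(B) > 0$. As in \S\ref{sec-d-tl}, for $\PP_{K_\omega}$-almost every configuration $\X$ the conditional measure $\PP_{K_\omega}(\cdot \mid \X, B^c)$ is the determinantal point process induced by the trace-class positive contraction $K_\omega^{[\X, B^c]}$ acting on $L^2(B, \omega dV)$, so
\[
\PP_{K_\omega}(\#_B > 0 \mid \X, B^c) = 1 - \PP_{K_\omega^{[\X, B^c]}}(\#_B = 0) = 1 - \det(1 - K_\omega^{[\X, B^c]}).
\]
Thus the assertion $\PP_{K_\omega}(\#_B > 0 \mid \X, B^c) > 0$ is equivalent to $K_\omega^{[\X, B^c]} \neq 0$, i.e. that the conditional operator does not vanish identically. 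The same reduction applies verbatim to $\PP_{K_\omega}^{\mathfrak{p}} = \PP_{K_\omega^{\mathfrak{p}}}$, using its own conditional operator $(K_\omega^{\mathfrak{p}})^{[\X, B^c]}$.

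So the task becomes: show that for $\PP$-almost every $\X$ the conditional kernel $K^{[\X, B^c]}(z,w)$ is not the zero kernel, where $\PP$ is either $\PP_{K_\omega}$ or $\PP_{K_\omega}^{\mathfrak{p}}$. First I would argue that it suffices to show that $\PP(\#_B > 0) > 0$ together with the fact that $\PP$ is determinantal induced by a projection whose range consists of real-analytic functions on $U$ (here one uses that holomorphic functions in $A^2(U,\omega)$, and the finitely many rank-one modifications producing $K_\omega^{\mathfrak{p}}$, are real-analytic; note $A^2(U,\omega;\mathfrak{p})$ still has infinite rank since $H^\infty(U)\cdot\varphi$ is infinite-dimensional for a suitable nonzero $\varphi$ vanishing at $\mathfrak{p}$). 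The key analytic input is that the conditional kernel inherits real-analyticity in its arguments — this is exactly the content of the forthcoming Lemma \ref{lem-real-anal}, which I would invoke — so $K^{[\X, B^c]}(z, w)$ is real-analytic on $B \times B$ for $\PP$-almost every $\X$. A real-analytic kernel on a connected set either vanishes identically or on a Lebesgue-null set; hence if $K^{[\X, B^c]} = 0$ then in particular the one-point intensity $K^{[\X, B^c]}(z,z)$ vanishes, forcing $\PP_{K^{[\X, B^c]}}(\#_B = 0) = 1$. Integrating the disintegration identity, $\PP(\#_B = 0) = \int \det(1 - K^{[\X, B^c]})\, d\PP(\X)$, and if $K^{[\X,B^c]}=0$ on a positive-measure set of $\X$ one still must rule this out by a genuinely probabilistic argument, which is where Proposition \ref{prop-anal} enters: it asserts precisely that an infinite-rank, trace-class, real-analytic-kernel determinantal point process on a connected domain is number insertion tolerant. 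I would therefore apply Proposition \ref{prop-anal} directly to $\PP_{K_\omega}$ (whose kernel $K_\omega$ is real-analytic and of infinite rank by Corollary \ref{cor-inf}) and to $\PP_{K_\omega^{\mathfrak{p}}}$ (whose kernel $K_\omega^{\mathfrak{p}}$ is real-analytic by the explicit formula \eqref{def-Palm-kernel} and of infinite rank as noted above).

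The main obstacle is the real-analyticity of the \emph{conditional} kernels $K^{[\X, B^c]}$, not of the original kernel: a priori $K^{[\X,B^c]}$ is obtained through a trace-norm limit \eqref{operator-con} of orthogonal projections onto subspaces cut out by the (random, countably many) exterior particles, and it is not obvious that this limit preserves joint real-analyticity in $(z,w)$. The resolution, carried out in Lemma \ref{lem-real-anal}, exploits that the functions being projected are restrictions of holomorphic (hence real-analytic) functions and that the conditioning operations — multiplication by indicator functions on $B^c$ and division by finite Gram determinants — interact with the holomorphic structure in a controlled way (cf. Remark \ref{real-complex-rem}); once that lemma is in hand, the deduction above is routine. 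A secondary point to be careful about is measurability and the "almost every $\X$" quantifier when combining the disintegration with Proposition \ref{prop-anal}, but this is handled by the same Fubini-type argument already used for deletion tolerance in \S\ref{palm-abs}.
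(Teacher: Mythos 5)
There is a genuine gap in your final step. Proposition \ref{prop-anal} requires $K$ to be \emph{trace class} on all of $L^2(E,\mu)$ and of infinite rank, but $K_\omega$ is the reproducing kernel of the infinite-dimensional space $A^2(U,\omega)$ and is therefore only \emph{locally} trace class; $\tr(K_\omega)=\dim A^2(U,\omega)=\infty$. So your concluding sentence, ``I would therefore apply Proposition \ref{prop-anal} directly to $\PP_{K_\omega}$,'' does not go through: the hypotheses of that proposition are simply not met by $K_\omega$ on all of $U$. The same remark applies to $K_\omega^{\mathfrak p}$.

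The paper avoids this by inserting an intermediate conditioning step which your proposal omits. One fixes an exhaustion $B_1\subset B_2\subset\cdots$ of $U$ by relatively compact open connected sets, chooses $n$ with $B\subset B_n$ and (via a $\liminf$ argument comparing $\PP_{K_\omega}(\#_{B_n}>0\mid\X,B_n^c)$ with $\PP_{K_\omega}(\#_B>0)$) shows $K_\omega^{[\X,B_n^c]}\ne 0$. This conditional operator lives on the bounded domain $B_n$, is genuinely trace class, and by Lemma \ref{lem-real-anal} has a real-analytic kernel on $B_n\times B_n$. Crucially, one then needs $K_\omega^{[\X,B_n^c]}$ to have \emph{infinite rank} in order to invoke Proposition \ref{prop-anal}: this is Corollary \ref{cor-rank}, which rests on Lemma \ref{lem-mod-range}, i.e.\ on the fact that $\overline{\Ran(K_\omega^{[\X,B_n^c]})}$ is an $H^\infty(U)$-module and hence either trivial or infinite-dimensional. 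Your proposal mentions infinite rank of the \emph{original} kernels $K_\omega$ and $K_\omega^{\mathfrak p}$, but that is the wrong object; the argument requires infinite rank of the \emph{conditional} kernel, and this is a non-trivial module-structure consequence you did not supply. Finally, Proposition \ref{prop-anal} applied to $\PP_{K_\omega^{[\X,B_n^c]}}$ gives $\PP_{K_\omega^{[\X,B_n^c]}}(\#_B>0\mid\X,B_n\setminus B)>0$, and the tower property $\PP_{K_\omega^{[\X,B_n^c]}}(\cdot\mid\X,B_n\setminus B)=\PP_{K_\omega}(\cdot\mid\X,B^c)$ then yields the claim for $B$. Your writeup correctly identifies Lemma \ref{lem-real-anal} as the key analytic input and correctly reduces to $K^{[\X,B^c]}\ne 0$, but the chain from there to Proposition \ref{prop-anal} is missing both the two-stage conditioning and the infinite-rank step for the conditional kernel, and the attempt to short-circuit this by applying Proposition \ref{prop-anal} to the non-trace-class $K_\omega$ is not valid.
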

If $B\subset U$ is as in Lemma \ref{lem-num-ins}, then, for $\PP_{K_\omega}$-almost every $\X$, we have 
\[
\PP_{K_\omega}(\#_{B}>0|\X,B^{c}) =\PP_{K_\omega^{[\X, B^c]}}(\#_{B}>0) =  1- \det(1- K_\omega^{[\X, B^c]}).
\]
 Lemma \ref{lem-num-ins} claims that for $\PP_{K_\omega}$-almost every $\X$, the operator $K_\omega^{[\X, B^c]}$ is a non-zero operator. The same result holds for the reproducing kernel $K_{\omega}^{\mathfrak{p}}$. 
The sections \S \ref{sec-an-kernel}, \S \ref{pf-real-an}  and \S \ref{sec-pf-lem-ins}  are devoted to the proof of Lemma \ref{lem-num-ins}.

\subsubsection{The case of real analytic kernels } \label{sec-an-kernel}
We start with an auxiliary result.

\begin{proposition}\label{prop-anal}
Let $E \subset \R^d$ be an open connected subset, equipped with a Radon measure $\mu$, absolutely continuous with respect to the Lebesgue measure on $E$ and whose support is the whole space $E$. Assume that $K$ is a { trace class} positive contraction of infinite rank  on $L^2(E, \mu)$  that 
 admits a version of kernel $K(t, s)$ which is real-analytic on $E\times E$.
Then the determinantal point process $\PP_K$ is number insertion tolerant. 
\end{proposition}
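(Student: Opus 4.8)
The plan is to show that for any relatively compact open connected $B \subset E$ with $\mu(B) > 0$ and $\PP_K$-almost every configuration $\X$, the conditional kernel $K^{[\X, B^c]}$ is a non-zero operator; by the gap probability formula this is exactly number insertion tolerance. I will argue by contradiction: suppose there is a positive-measure set $\mathcal{B}$ of configurations $\X$ for which $K^{[\X, B^c]} = 0$, i.e. $\PP_K(\#_B = 0 \,|\, \X, B^c) = 1$. The key is to extract from this a rigidity-type statement that conflicts with real analyticity of $K(t,s)$.

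First I would recall the trace-class formula for the conditional kernel from \S\ref{sec-trace} (valid here since $K$ is trace class, so $\PP_K$ is supported on finite configurations): writing $\X \cap B^c = \{p_1, \dots, p_m\}$, the operator $K^{[\X, B^c]}$ is built from the Palm kernel $K^{\X \cap B^c}$ via $\chi_B K^{\X \cap B^c}(1 - \chi_{B^c} K^{\X \cap B^c})^{-1}\chi_B$. The condition $K^{[\X, B^c]} = 0$ forces $\chi_B K^{\X \cap B^c}\chi_B = 0$ (the operator on the right dominates $\chi_B K^{\X \cap B^c}\chi_B$ in the order of positive operators, being a sum of nonnegative terms beginning with it), i.e. $\int_B K^{\X\cap B^c}(t,t)\,d\mu(t) = 0$, hence $K^{\X \cap B^c}(t,t) = 0$ for $\mu$-a.e. $t \in B$. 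Since $K^{\X \cap B^c}$ is a nonnegative-definite kernel, $K^{\X\cap B^c}(t,t) = 0$ forces $K^{\X\cap B^c}(t,s) = 0$ for all $s$ whenever $t \in B$. Recalling \eqref{def-Palm-kernel}, the vanishing of $K^{p_1,\dots,p_m}(t,s)$ means exactly that the $(m+1)\times(m+1)$ determinant $\Det[K(x_i, y_j)]$ with $x_0 = t, y_0 = s, x_i = y_i = p_i$ vanishes, i.e. that the reproducing kernel vectors $K(\cdot, t), K(\cdot, p_1), \dots, K(\cdot, p_m)$ are linearly dependent in $L^2(E,\mu)$ (equivalently, the Gram determinant of the $K(\cdot, p_i)$ already vanishes, or $K(\cdot, t)$ lies in their span) for a.e. $t \in B$.

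Next I would use the disintegration of $\PP_K$ over $\X \cap B^c$: since $\PP_K(\mathcal{B}) > 0$, for a positive-$\rho_m$-measure set of tuples $(p_1, \dots, p_m) \in (B^c)^m$ one has $K(\cdot, t) \in \spann\{K(\cdot, p_1), \dots, K(\cdot, p_m)\}$ for a.e. $t \in B$. Fix such a tuple. Then the closed subspace $W := \overline{\spann}\{K(\cdot,t): t \in B\}$ is finite-dimensional. But $K(\cdot, t)$ is, by hypothesis, real analytic in $t$ on the connected open set $E$ (this is where the real analyticity of $K(t,s)$ enters: $t \mapsto K(\cdot, t)$ is a real-analytic map into $L^2$, since its inner products against a total family are real analytic and one can control derivatives). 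A real-analytic $L^2$-valued map on a connected domain whose values on a nonempty open subset $B$ span a finite-dimensional space must have all its values in that same finite-dimensional space — this follows by analytic continuation applied to the (real-analytic) function $t \mapsto \|(1 - P_W) K(\cdot, t)\|^2$, which vanishes on $B$ and hence on all of $E$. Therefore $\overline{\spann}\{K(\cdot, t): t \in E\} = W$ is finite-dimensional, which would make $K$ a finite-rank operator, contradicting the hypothesis that $K$ has infinite rank.

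I expect the main obstacle to be the careful justification that $t \mapsto K(\cdot, t) \in L^2(E,\mu)$ is a genuinely real-analytic Hilbert-space-valued map, rather than merely weakly real analytic, so that the analytic-continuation step (vanishing of $\|(1-P_W)K(\cdot,t)\|^2$ on $B$ propagates to $E$) is legitimate; one should deduce this from local uniform convergence of the power series of $K(t,s)$ in both variables and Cauchy estimates, reducing to scalar real analyticity of $t \mapsto \langle K(\cdot, t), \varphi\rangle = \overline{K(\varphi)(t)} $ type quantities and of $t \mapsto K(t,t)$. A secondary technical point is the passage from "$K^{[\X,B^c]} = 0$ on a positive-measure set of $\X$" to "a positive-measure set of finite tuples $(p_1,\dots,p_m)$ works", which is routine given the disintegration formula for $\PP_K$ over its restriction to $B^c$ and the fact that $\PP_K$ is supported on finite configurations.
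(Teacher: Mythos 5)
Your overall strategy matches the paper's: argue by contradiction that $K^{[\X,B^c]}=0$, deduce $\chi_B K^{\X\cap B^c}\chi_B = 0$ from positivity of each term in the Neumann series (the paper phrases this as $\chi_B(K^{\X\cap B^c})^{1/2}=0$, you phrase it via the diagonal and Cauchy--Schwarz for nonnegative-definite kernels --- same content), and then leverage real analyticity to force $K$ to have finite rank. The divergence is in how the analyticity is deployed, and that is where a genuine gap appears.

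You reinterpret the vanishing as linear dependence of the reproducing-kernel vectors $K(\cdot,t), K(\cdot,p_1),\dots,K(\cdot,p_m)$ in $L^2(E,\mu)$ and then propagate ``$K(\cdot,t)\in W$'' from $B$ to all of $E$ by analytic continuation of $t\mapsto \|(1-P_W)K(\cdot,t)\|_{L^2}^2$. The problem, which you correctly flag but do not close, is that this function (and more generally the strong real analyticity of the Hilbert-valued map $t\mapsto K(\cdot,t)$) is \emph{not} a consequence of the hypothesis. Only the scalar kernel $(t,s)\mapsto K(t,s)$ is assumed real analytic. The quantities $\|K(\cdot,t)\|_{L^2}^2=(K^2)(t,t)$ and $\langle K(\cdot,t),K(\cdot,p_i)\rangle=(K^2)(p_i,t)$ are integrals of $K$ over the possibly noncompact, possibly infinite-measure set $E$; real analyticity of such integrals does not follow from pointwise real analyticity of the integrand without additional uniform control, and the proposition supplies none. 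Your suggested remedy, reducing to $t\mapsto K(t,t)$ and $t\mapsto K\varphi(t)$, runs into the same difficulty: neither of these is given to be real analytic either.

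The paper sidesteps the issue entirely by keeping the analyticity scalar and self-contained. Since $K^{\X\cap B^c}(x,y)$ is, via formula \eqref{palm-kernel}, an explicit rational expression in $K(x,y)$ and finitely many values $K(\cdot,p_i)$, $K(p_j,\cdot)$, $K(p_u,p_v)$, it is manifestly real analytic on $E\times E$. The operator identity $\chi_B K^{\X\cap B^c}=0$ means this analytic function vanishes a.e.\ on $B\times E$, a set of positive Lebesgue measure in the connected open set $E\times E$, hence vanishes identically; substituting $K^{\X\cap B^c}\equiv 0$ into \eqref{palm-kernel} gives $K(x,y)=\sum_{i,j}\alpha_{ij}K(x,p_i)K(p_j,y)$, so $\rank K\le r^2<\infty$, a contradiction. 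A minimal repair of your route that preserves its flavour but stays scalar: from $K^{\X\cap B^c}(t,t)=0$ a.e.\ on $B$, note that $t\mapsto K^{\X\cap B^c}(t,t)$ is real analytic on $E$ (again by \eqref{palm-kernel}), hence vanishes on all of $E$; then $|K^{\X\cap B^c}(t,s)|^2\le K^{\X\cap B^c}(t,t)K^{\X\cap B^c}(s,s)=0$ gives $K^{\X\cap B^c}\equiv 0$, and one concludes as before.
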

\begin{proof}
The trace class assumption $\tr(K)< \infty$ on $K$ implies, cf. \S \ref{sec-mc}, that  for any Borel subset $W\subset E$ and $\PP_K$-almost every $\X\in \Conf(E)$, the conditional measure $\PP_K(\cdot | \X, W^c)$ is a determinantal point process and is induced by the kernel 
\begin{align*}
K^{[\X, W^c]}:  = \chi_{W} K^{\X \cap W^c} (1 - \chi_{W^c} K^{\X \cap W^c})^{-1} \chi_W  =  \chi_{W}  \sum_{n=0}^\infty K^{\X \cap W^c}  (\chi_{W^c} K^{\X \cap W^c}  )^{n}\chi_{W},
\end{align*}
where $K^{\X \cap W^c}$ is given as follows:  if $\X \cap W^c = \{p_1, \cdots, p_r \}$, then,  setting $p_0= x, q_0 = y$, $q_i = p_i$ for $1\le i \le r$, we have
 \begin{align}\label{palm-kernel}
 K^{\X \cap W^c}(x, y) : =
 \displaystyle \frac{\Det[K(p_i, q_j)]_{0 \le i, j \le r}
}{
 \Det[K(p_i, p_j)]_{1\le i, j \le r}
}= K(x, y) - \sum_{i, j =1}^r \alpha_{ ij}  K(x, p_i) K(p_j, y),  
\end{align}
where the constants $ \alpha_{ ij}$'s are given by the formula
\[
 \alpha_{ ij}  = \frac{ P_{ij} (K(p_u, p_v))_{1 \le u, v \le r}
}{
 \Det[K(p_u, p_v)]_{1\le u, v \le r} 
}, \quad \text{ with $P_{ij}$'s polynomials.}
\]
In particular, $K^{\X \cap W^c}(x, y)$ is real analytic in the coordinates $(x, y)$. 

By definition,  $\PP_K$ is number insertion tolerant if and only if for any relatively compact $B \subset E$ with $\mu(B) > 0$, we have 
\[
\PP_K (\#_B > 0| \X, B^c) = \PP_{K^{[\X, B^c]}} (\#_B >0) > 0 \quad \text{for $\PP_K$-almost every $\X$.} 
\]
Therefore, to show that $\PP_K$ is number insertion tolerant is equivalent to show that for any relatively compact $B \subset E$ with $\mu(B) > 0$, 
\begin{align}\label{ins-eq}
K^{[\X, B^c]} \ne 0  \quad \text{for $\PP_K$-almost every $\X$.}
\end{align}
Assume, by contradiction with \eqref{ins-eq}, that
\begin{align}\label{0-op}
0 = K^{[\X, B^c]} =  \sum_{n=0}^\infty  \chi_{B}   K^{\X \cap B^c}  (\chi_{B^c} K^{\X \cap W^c}  )^{n}\chi_{B} \quad \text{for $\PP_K$-almost every $\X$.}
\end{align}
Since for any $n =0, 1, \cdots, $ the operators $\chi_{B}   K^{\X \cap B^c}  (\chi_{B^c} K^{\X \cap W^c}  )^{n}\chi_{B}$ are positive, the equality \eqref{0-op} implies that 
\[
\chi_{B}   K^{\X \cap B^c}  (\chi_{B^c} K^{\X \cap W^c}  )^{n}\chi_{B} = 0, \text{for any $n = 0, 1, \cdots.$}
\]
In particular, for $n=0$,  we get 
\[
 [\chi_B    (K^{\X \cap B^c})^{1/2} ]  [ \chi_B    (K^{\X \cap B^c})^{1/2}]^* =   \chi_{B}   K^{\X \cap B^c} \chi_{B}  = 0\]
 and hence $ \chi_B    (K^{\X \cap B^c})^{1/2} =0$. Then
 \[
  \chi_B    K^{\X \cap B^c} =  \chi_B    (K^{\X \cap B^c})^{1/2}     (K^{\X \cap B^c})^{1/2} =0. 
 \]
Therefore,  the real analytic function $K^{\X \cap B^c} (x, y)$ vanishes, up to a $\mu^{\otimes 2}$-negligible set, on the subset $B\times E \subset E \times E$. Since $\mu$ is absolutely continuous with respect to the Lebesgue measure, $B\times E$ is of positive Lebesgue measure.  But since $E$ is connected and $\mu(B)> 0$, we must have 
\begin{align}\label{id-0}
K^{\X \cap B^c} (x, y) =0 \,\, \text{for all $(x, y) \in E\times E$.}
\end{align}
Substituting the equality \eqref{id-0} into the equality \eqref{palm-kernel},  we get 
\[
K(x, y)  = \sum_{i, j =1}^r \alpha_{ ij}  K(x, p_i) K(p_j, y),  \quad \text{if $\X \cap B^c = \{p_1, \cdots, p_r\}.$}
\]
Hence $\rank( K) \le r^2 < \infty$, which contradicts the original assumption  $\rank(K) = \infty$. 
\end{proof}

\begin{remark}
Without the analyticity condition, Proposition \ref{prop-anal} fails. Indeed, take any $E_0\subset E$ with $\mu(E\setminus E_0)> 0$ and let $K_0$ be an infinite rank trace class positive contraction  on $L^2(E_0, \mu)$.  Using the  decomposition $L^2(E, \mu) = L^2(E_0, \mu)\oplus L^2(E\setminus E_0, \mu)$, we define
 \begin{align*}
K =  \left[\begin{array}{cc} K_0 & 0 \\ 0 &0 \end{array}\right]. 
\end{align*}
Then $K$ is an infinite rank trace class positive contraction  on $L^2(E, \mu)$. But for any relatively compact $B\subset E\setminus E_0$ with $\mu(B) > 0$, we have $\PP_K(\cdot | \X, B^c) = \delta_{\emptyset}$. This implies that $\PP_K$ is not number insertion tolerant. 
\end{remark}

\subsubsection{Real analyticity of the conditional kernel $K_\omega^{[\X, B^c]}$}\label{pf-real-an} We start with an elementary   lemma, cf. Krantz \cite[Lemma 1.1.1]{Krantz}.
\begin{lemma}\label{lem-krantz}
Let $B \subset U$ be a relatively compact open subset in $U$ and $A \subset B$ be any compact subset in $B$.  There exists a constant $C_{A, B}>0$ such that 
\[
\text{$\sup_{z\in A} | f(z)| \le C_{A, B} \| f\|_{L^2(B, \omega dV)}$,  for all $f \in A^2(B, \omega)$,}
\]
where 
\[
A^2(B, \omega): = \Big\{\text{$f: B \rightarrow\C$ holomorphic on $B$} \Big|  f \in L^2(B, \omega dV)\Big\}.
\]
In particular, $A^2(B, \omega)$ is closed in $L^2(B, \omega dV)$. 
\end{lemma}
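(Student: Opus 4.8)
The plan is to deduce the estimate from the classical sub-mean value property of holomorphic functions together with the lower bound \eqref{weight-low} on the weight $\omega$. First I would fix the geometry. Since $A$ is compact, $B$ is open, and $A\subset B$, the quantity $r:=\tfrac12\,\mathrm{dist}(A,\C^{d}\setminus B)$ is strictly positive, where I use the $\ell^{\infty}$-metric on $\C^{d}$ so that metric balls are polydiscs. For $z\in A$ write $\Delta(z,r)=\{u\in\C^{d}:\max_{1\le j\le d}|u_{j}-z_{j}|<r\}$; by construction $\overline{\Delta(z,r)}\subset B$ for every $z\in A$, the set $A_{r}:=\bigcup_{z\in A}\Delta(z,r)$ is relatively compact in $B$ (hence in $U$), and $V(A_{r})>0$. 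For $f\in A^{2}(B,\omega)$ and $z\in A$, iterating the one-variable sub-mean value inequality for the subharmonic function $|f|^{2}$ over the coordinates $z_{1},\dots,z_{d}$ gives
\[
|f(z)|^{2}\ \le\ \frac{1}{V(\Delta(z,r))}\int_{\Delta(z,r)}|f(u)|^{2}\,dV(u)\ =\ \frac{1}{(\pi r^{2})^{d}}\int_{\Delta(z,r)}|f(u)|^{2}\,dV(u).
\]

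Next I would invoke \eqref{weight-low} for the relatively compact set $A_{r}$ of positive Lebesgue measure: there is a constant $m=m_{A,B}:=\essinf_{u\in A_{r}}\omega(u)>0$, so that
\[
|f(z)|^{2}\ \le\ \frac{1}{m\,(\pi r^{2})^{d}}\int_{\Delta(z,r)}|f(u)|^{2}\omega(u)\,dV(u)\ \le\ \frac{1}{m\,(\pi r^{2})^{d}}\,\|f\|_{L^{2}(B,\omega dV)}^{2}.
\]
Taking square roots and then the supremum over $z\in A$ yields the claimed inequality with $C_{A,B}=\big(m\,(\pi r^{2})^{d}\big)^{-1/2}$, a constant depending only on $A$ and $B$ (through $r$ and $m$).

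Finally, for the closedness assertion I would argue as follows. If $(f_{n})$ is a Cauchy sequence in $A^{2}(B,\omega)$, it has an $L^{2}(B,\omega dV)$-limit $f$. Applying the estimate just proved to the differences $f_{n}-f_{m}$ over an exhaustion of $B$ by compact sets shows that $(f_{n})$ is uniformly Cauchy on every compact subset of $B$; by the Weierstrass theorem its locally uniform limit is holomorphic on $B$, and passing to a subsequence that converges a.e. to $f$ shows this holomorphic limit coincides $V$-a.e. with $f$. Hence $f\in A^{2}(B,\omega)$. The argument is entirely standard; the only point that uses the hypotheses of the paper rather than classical Bergman theory is the appeal to \eqref{weight-low} to bound $\omega$ from below on the thickened compact $A_{r}$, and I do not expect any genuine obstacle.
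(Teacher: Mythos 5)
Your proof is correct and takes essentially the same approach as the paper's: a sub-mean-value estimate on a small polydisc around each point of $A$ (the paper uses Euclidean balls, the mean-value property of $f$ itself, and Cauchy--Schwarz) combined with the lower bound on $\omega$ from assumption \eqref{weight-low}, after which closedness follows from uniform-on-compacts convergence exactly as you say. The substitution of polydiscs for balls and of the sub-mean-value property of the plurisubharmonic function $|f|^2$ for the Cauchy--Schwarz step is a cosmetic variation, not a different route.
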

\begin{proof}
Since $A$ is compact and $B$ is open in the open subset $U\subset \C^d$, there exists a constant $r  = r(A, B)> 0$ such that for any $z \in A,  B(z, r) \subset B$. Here $B(z, r)$ is the usual Euclidean ball with center $z$ and radius $r$.

Therefore, for each $z\in A$ and $f \in   A^2(B, \omega)$, the mean-value property for holomorphic functions implies that 
\begin{multline*}
| f(z) |   = \left|  \frac{1}{V(B(z, r))} \int_{B(z, r)} f(t) dV(t)\right|  \le
\\
 \le   \frac{1}{V(B(z, r))}  \left( \int_{B(z, r)} \frac{1}{\omega(t)} dV(t)\right)^{1/2}\left(  \int_{B(z, r)} |f(t) |^2 \omega(t) dV(t)\right)^{1/2}.
\end{multline*}
By the assumption \eqref{weight-low}, we have 
$$
| f(z)|  \le \frac{V(B(z, r))^{-1/2} }{(\displaystyle{\essinf_{z\in B} \omega(z)})^{1/2}} \| f\|_{L^2(B, \omega dV)} =  \frac{c_d r^{-d/2}}{(\displaystyle{\essinf_{z\in B} \omega(z)})^{1/2}}  \| f\|_{L^2(B, \omega dV)} . 
$$
Thus we may take 
\[
C_{A, B} = \frac{c_d r^{-d/2}}{(\displaystyle{\essinf_{z\in B} \omega(z)})^{1/2}} < \infty.
\]

Therefore, for sequences in $A^2(B,\omega)$, the convergence
in $L^2(B,\omega dV)$-norm yields uniform convergence on compacts, so the limit does belong to $A^2(B,\omega)$ as well, hence $A^2(B,\omega)$ is closed.
\end{proof}

\begin{lemma}\label{lem-real-anal}
For a relatively compact open subset $B \subset U$, the map $(z, w) \mapsto K_\omega^{[\X, B^c]}(z, \overline{w})$ is holomorphic on $B \times B$ and the map $(z, w) \mapsto K_\omega^{[\X, B^c]}(z, w)$ is real analytic on $B\times B$. 
\end{lemma}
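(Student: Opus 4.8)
The plan is to show that, although the conditional operator $K_\omega^{[\X,B^c]}$ is not itself an orthogonal projection, its range consists of functions holomorphic on $B$, and then to read off the real analyticity of its kernel from a spectral decomposition together with the Bergman estimate of Lemma~\ref{lem-krantz}. Throughout, $\X$ is a configuration for which the conditional construction of \S\ref{sec-cond-rep} is valid, i.e.\ $\PP_{K_\omega}$-almost every $\X$, and we write $\mu=\omega\, dV$.

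\textbf{Step 1: the range is holomorphic.} Set $Q_n:=\chi_B K_\omega^{[\X,E_n]}\chi_B$, a positive trace-class operator on $L^2(B,\mu)$ (trace class because $B$ is relatively compact and $K_\omega^{[\X,E_n]}$ is locally trace class). I would first check that $\Ran(Q_n)\subset A^2(B,\omega)$. By the definition in \S\ref{sec-cond-rep}, $K_\omega^{[\X,E_n]}$ is the orthogonal projection of $L^2(U,\mu)$ onto $\overline{\chi_{B\cup F_n}H(\X\cap E_n)}$, with $H(\X\cap E_n)\subset A^2(U,\omega)$; hence, for $f\in L^2(B,\mu)$ extended by zero, $K_\omega^{[\X,E_n]}f$ is an $L^2(U,\mu)$-limit of functions $\chi_{B\cup F_n}\varphi_k$ with $\varphi_k\in A^2(U,\omega)$. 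Since $\chi_{B\cup F_n}\equiv 1$ on $B$, the vector $Q_nf=\chi_BK_\omega^{[\X,E_n]}f$ is an $L^2(B,\mu)$-limit of the holomorphic functions $\varphi_k|_B\in A^2(B,\omega)$, and $A^2(B,\omega)$ is closed in $L^2(B,\mu)$ by Lemma~\ref{lem-krantz}; therefore $Q_nf\in A^2(B,\omega)$. By \eqref{operator-con} we have $Q_n\to K_\omega^{[\X,B^c]}$ in trace-class norm, in particular in operator norm, so for every $f\in L^2(B,\mu)$ the vector $K_\omega^{[\X,B^c]}f=\lim_n Q_nf$ again lies in the closed subspace $A^2(B,\omega)$. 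Thus $\Ran(K_\omega^{[\X,B^c]})\subset A^2(B,\omega)$.

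\textbf{Step 2: the kernel from the spectral decomposition.} Since $K_\omega^{[\X,B^c]}=\chi_BK_\omega^{[\X,B^c]}\chi_B$ is a positive trace-class operator on $L^2(B,\mu)$, it admits an orthonormal system of eigenfunctions $(\phi_j)_j$ in $L^2(B,\mu)$ with eigenvalues $\lambda_j>0$, $\sum_j\lambda_j<\infty$, and its kernel is $\sum_j\lambda_j\,\phi_j(z)\overline{\phi_j(w)}$. By Step~1 each $\phi_j=\lambda_j^{-1}K_\omega^{[\X,B^c]}\phi_j$ lies in $A^2(B,\omega)$ with $\|\phi_j\|_{L^2(B,\mu)}=1$, so by Lemma~\ref{lem-krantz}, for every compact $A\subset B$ we have $\sup_{z\in A}|\phi_j(z)|\le C_{A,B}$ for all $j$; hence the series $\sum_j\lambda_j\,\phi_j(z)\overline{\phi_j(w)}$ converges uniformly on $A\times A$ with majorant $C_{A,B}^2\sum_j\lambda_j<\infty$. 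Denote its sum by $\widetilde K(z,w)$. Being a locally uniform limit of functions holomorphic in $z$ and anti-holomorphic in $w$, $\widetilde K$ is holomorphic in the first variable and anti-holomorphic in the second on $B\times B$ — that is, $(z,w)\mapsto\widetilde K(z,\overline w)$ is holomorphic on $B\times B$, the first assertion of the lemma — and such a function is in particular real analytic on $B\times B\subset\R^{2d}\times\R^{2d}$, the second assertion. It remains to identify $\widetilde K$ with the kernel of $K_\omega^{[\X,B^c]}$: the partial sums of the series converge to $K_\omega^{[\X,B^c]}$ in Hilbert--Schmidt norm, so their kernels converge to $K_\omega^{[\X,B^c]}(\cdot,\cdot)$ in $L^2(B\times B,\mu\otimes\mu)$, while they converge to $\widetilde K$ pointwise everywhere on $B\times B$; comparing a.e.-convergent subsequences gives $\widetilde K=K_\omega^{[\X,B^c]}$ almost everywhere on $B\times B$, so $\widetilde K$ is the desired version of the conditional kernel.

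\textbf{Where the difficulty lies.} The only genuinely non-formal point is Step~1: unlike the projections $K_\omega^{[\X,E_n]}$, the limiting operator $K_\omega^{[\X,B^c]}$ is obtained merely as a trace-norm limit and is a priori just a positive contraction with an $L^2$-kernel, for which there is no reason to expect real analyticity. What rescues the argument is the rigidity of holomorphic functions quantified by Lemma~\ref{lem-krantz}: it is what makes $A^2(B,\omega)$ closed, so that the "range is holomorphic" property passes to the limit, and what forces the spectral series of $K_\omega^{[\X,B^c]}$ to converge locally uniformly. This is precisely the step where the complex analyticity of the functions in the Bergman space $A^2(U,\omega)$ is used, as emphasized in Remark~\ref{real-complex-rem}.
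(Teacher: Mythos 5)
Your proof is correct and follows essentially the same route as the paper's: spectral decomposition of $K_\omega^{[\X,B^c]}$, holomorphicity of the eigenfunctions via trace-norm convergence of $\chi_B K_\omega^{[\X,E_n]}\chi_B$ and closedness of $A^2(B,\omega)$, then Lemma \ref{lem-krantz} to get locally uniform convergence of the spectral series. Your Step 1 supplies slightly more detail than the paper on why $\chi_B K_\omega^{[\X,E_n]}\chi_B f$ lies in $A^2(B,\omega)$ (the paper simply asserts this, and one could also invoke Lemma \ref{lem-closed} to skip the closure altogether), and you add the routine identification of the uniformly convergent sum with the $L^2$-kernel, but the argument is the same in substance.
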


\begin{proof}
The operator $K_\omega^{[\X, B^c]}$ is positive and has finite trace. Let $\lambda_1 \ge \cdots \ge \lambda_k \ge \cdots \ge 0$ be  the eigenvalues and $(\varphi_k)_{k=1}^\infty$  be the $L^2$-normalized eigenfunctions for $K_\omega^{[\X, B^c]}$. We have $\sum_{k=1}^\infty \lambda_k <\infty$ and 
\[
K_\omega^{[\X, B^c]} (z, w)   = \sum_{k=1}^\infty    \lambda_k \varphi_k(z) \overline{\varphi_k(w)} =  \sum_{k\in \N: \lambda_k > 0}   \lambda_k \varphi_k(z) \overline{\varphi_k(w)}.
\]
Now we show that for any $k\in \N$ with $\lambda_k>0$, the corresponding eigenfunction $\varphi_k$ is holomorphic on $B$.  Indeed, we have 
\[
\varphi_k = \frac{1}{\lambda_k} K_\omega^{[\X, B^c]}  (\varphi_k) = \frac{1}{\lambda_k}  \lim\limits_{n\to\infty} \chi_B K_\omega^{[\X, E_n]} \chi_B (\varphi_k),
\]
where the limit takes place in $L^2(B, \omega dV)$. Since 
\[
\chi_B K_\omega^{[\X, E_n]} \chi_B (\varphi_k) \in  A^2(B, \omega),
\]
and $A^2(B, \omega)$ is closed in $L^2(B, \omega dV)$, we obtain $\varphi_k \in A^2(B, \omega)$. 

Now for any $k \in \N$ with $\lambda_k >0$, the function 
\[
(z, w) \mapsto \varphi_k(z) \overline{\varphi_k(\bar{w})}
\]
 is holomorphic on $B\times B \subset \C^d \times \C^d$.  For any compact subset $A\subset B$,  by Lemma \ref{lem-krantz}, there exists $C_{A, B}> 0$, such that for any $k \in \N$ with $\lambda_k >0$, 
\[
\sup_{(z, w) \in A\times A} | \varphi_k(z) \overline{\varphi_k(\bar{w})}| \le C_{A, B} \| \varphi_k\|_{A^2(B, \omega)}^2 = C_{A, B}. 
\]
It follows that the  series 
\begin{align*}
K_\omega^{[\X, B^c]} (z, \bar{w})    =  \sum_{k\in \N: \lambda_k > 0}   \lambda_k \varphi_k(z) \overline{\varphi_k(\bar{w})}  
\end{align*}
converges uniformly on any compact subset of $B\times B$, that the map $(z, w) \mapsto K_\omega^{[\X, B^c]}(z, \overline{w})$ is holomorphic on $B \times B$ and  the map $(z, w) \mapsto K_\omega^{[\X, B^c]}(z, w)$ is real analytic on $B\times B$. 
\end{proof}

\begin{remark}\label{real-complex-rem}
For general real analytic  reproducing kernels, we do not know whether the corresponding conditional kernels are still real analytic.    The proof above that the conditional kernel $K_\omega^{[\X, B^c]}(z, w)$  is real analytic uses that  uniform convergence on compact subsets preserves  holomorphicity, a property  not shared by real analytic functions. 
\end{remark}

\subsubsection{Proof of Lemma \ref{lem-num-ins}}\label{sec-pf-lem-ins}
First fix an exhausting sequence 
\[
B_1 \subset B_2 \subset \cdots \subset B_n \subset \cdots \subset  U
\] 
of relatively compact  open connected subsets of $U$.  Let $B\subset U$ be a relatively compact subset satisfying $\mu(B) > 0$. Using \cite[Lemma 7.2]{BQS16} and  the  relation (6.7) in \cite[Proposition 6.4]{BQS16}, for $\PP_{K_\omega}$-almost every $\X$  we can choose $n$ large enough such that $
B \subset B_n$
and 
\begin{align}\label{cond-non-0}
  \PP_{K_\omega} (\#_{B_n} > 0 | \X, B_n^c) > 0. 
\end{align}
Indeed,  for $n$ large enough and such that $B \subset B_n$, we have 
\[  
\PP_{K_\omega} (\#_{B_n} > 0 | \X, B_n^c)  \ge \PP_{K_\omega} (\#_{B} > 0 | \X, B_n^c).
\]
Therefore, for $\PP_{K_\omega}$-almost every configuration $\X \in \Conf(U)$, 
\begin{multline*}
\liminf_{n\to\infty}\PP_{K_\omega} (\#_{B_n} > 0 | \X, B_n^c)    \ge \lim\limits_{n\to\infty} \PP_{K_\omega} (\#_{B} > 0 | \X, B_n^c)=
\\
  =  \PP_{K_\omega} (\#_{B} > 0) 
= 1 - \det(1 - \chi_B K_\omega\chi_B). 
\end{multline*}
Since $\chi_B K_\omega\chi_B$ is a non-zero trace class operator,  we have $\det(1 - \chi_B K_\omega\chi_B) < 1$ and hence obtain  the desired inequality \eqref{cond-non-0}. 
The inequality \eqref{cond-non-0} combined with the equality $\PP_{K_\omega}(\cdot | \X, B_n^c) = \PP_{K_\omega^{[\X, B_n^c]}}$, implies that 
$
K_\omega^{[\X, B_n^c]} \ne 0.
$
By Lemma \ref{lem-real-anal}, the function $(z, w) \mapsto K_\omega^{[\X, B_n^c]}(z, w)$  is real analytic on $B_n \times B_n \subset \C^d \times \C^d \simeq \R^{2d} \times \R^{2d}$. Corollary \ref{cor-rank} implies  
that  $K_\omega^{[\X, B_n^c]}$ has infinite rank.
Since $B_n$ is relatively compact in $U$, the operator $K_\omega^{[\X, B_n^c]}$ is trace class. 
The kernel $K_\omega^{[\X, B_n^c]}$ thus satisfies the assumptions of Proposition \ref{prop-anal}, which implies that   $\PP_{K_\omega^{[\X, B_n^c]}}$ is number insertion tolerant. In particular, we have 
\begin{align}\label{ins-tol-cond}
\PP_{K_\omega^{[\X, B_n^c]}} ( \#_B> 0 | \X, B_n\setminus B) > 0.
\end{align}
Noting  the  measure-theoretical identity 
\[
\PP_{K_\omega^{[\X, B_n^c]}} ( \cdot| \X, B_n\setminus B) =   [\PP_{K_\omega}(\cdot | \X, B_n^c) ]( \cdot| \X, B_n\setminus B)   = \PP_{K_\omega}(\cdot | \X, B^c),
\]
we obtain that the inequality \eqref{ins-tol-cond} is equivalent to the inequality
\[
\PP_{K_\omega}(\#_B >0 | \X, B^c)>0,
\]
and the proof of number insertion tolerance of  $\PP_{K_\omega}$ is complete.  
The argument also yields the number insertion tolerance for $\PP_{K_\omega}^{\mathfrak{p}}$ : simply note  that the kernel $K_\omega^{\mathfrak{p}}$ corresponding to  the reduced  Palm measure $\PP_{K_\omega}^{\mathfrak{p}}$ is the reproducing kernel of the Hilbert subspace $A^2(U, \omega; \mathfrak{p}) $ defined in \eqref{berg-palm-sp} and that 
{$A^2(U, \omega; \mathfrak{p})$ is a sub-$H^\infty(U)$-module of $A^2(U, \omega)$.}

\subsection{Insertion tolerance of $\PP_{K_\omega}$ and $\PP_{K_\omega}^{\mathfrak{p}}$}\label{sec-ins-tl}

\begin{lemma}\label{lem-nice}
Let $\PP$ be a point process on $U$. Assume that for any non-empty relatively compact connected open subset $S \subset U$  and for $\PP$-almost every $\X$,  we have 
\[
\PP(\#_S = n| \X, S^c) > 0, \quad \text{for all $n\in\N$,}
\] 
and moreover, 
\[
[\PP(\cdot | \X, S^c)](\cdot| \#_S  =n)   \simeq \pi^{(n)}_{*} ((dV|_S)^{\otimes n}), \quad \text{for all $n\in\N$,} 
\]
where $\simeq$ denotes the relation of mutual absolute continuity.  Then $\PP$ is  insertion tolerant. 
\end{lemma}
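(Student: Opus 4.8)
The plan is to prove $\PP * \delta_{\Unif(S)} \ll \PP$ for an arbitrary non-empty relatively compact connected open subset $S \subset U$, which suffices by the remark following Definition~\ref{def-ins}. I would reduce this to a fibrewise statement over $\Conf(S^c)$, and then establish the fibrewise statement by decomposing the relevant conditional measures according to the number of particles in $S$.

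First I would record how the convolution interacts with the product identification $\Conf(U) \simeq \Conf_U(S) \times \Conf_U(S^c)$. If $\mathscr{X} \sim \PP$ and $\xi \sim \Unif(S)$ are independent, then, since $\xi \in S$ almost surely, $(\mathscr{X} \cup \{\xi\})|_{S^c} = \mathscr{X}|_{S^c}$ whereas $(\mathscr{X} \cup \{\xi\})|_{S} = (\mathscr{X}|_S) \cup \{\xi\}$. Hence $\PP*\delta_{\Unif(S)}$ has the same $S^c$-marginal $[\PP]_{S^c}$ as $\PP$, and, conditionally on $\mathscr{X}|_{S^c} = \mathfrak{Z}$, its $S$-component has law $\PP(\cdot\,|\,\mathfrak{Z}, S^c) * \delta_{\Unif(S)}$, because $\xi$ is independent of $\mathscr{X}$. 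Comparing with the disintegration \eqref{f-dis-int} of $\PP$, it then suffices to show that for $[\PP]_{S^c}$-almost every $\mathfrak{Z}$ the finite point process $Q_{\mathfrak{Z}} := \PP(\cdot\,|\,\mathfrak{Z}, S^c)$ on $S$ satisfies $Q_{\mathfrak{Z}} * \delta_{\Unif(S)} \ll Q_{\mathfrak{Z}}$; a $\PP$-null set then has $Q_{\mathfrak{Z}}$-null, hence $(Q_{\mathfrak{Z}} * \delta_{\Unif(S)})$-null, $\mathfrak{Z}$-sections, which gives $\PP*\delta_{\Unif(S)} \ll \PP$. Here one should note that $\PP(\cdot\,|\,\X, S^c)$ depends on $\X$ only through $\X|_{S^c}$, so the hypotheses of the lemma, stated for $\PP$-a.e. $\X$, hold for $[\PP]_{S^c}$-a.e. $\mathfrak{Z}$.

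To prove the fibrewise statement I would fix $\mathfrak{Z}$ satisfying both hypotheses and decompose $Q_{\mathfrak{Z}}$ over the strata $\{\#_S = n\}$, $n\in\N$. Writing $\sigma_n^S := \pi^{(n)}_{*}((dV|_S)^{\otimes n})$ and $c_n := Q_{\mathfrak{Z}}(\#_S = n)$, the hypotheses give $c_n>0$ and $Q_{\mathfrak{Z}}(\cdot\,|\,\#_S = n) \simeq \sigma_n^S$, so the restriction of $Q_{\mathfrak{Z}}$ to the $n$-particle stratum is equivalent to $c_n\sigma_n^S$. The elementary fact I would use is that if a measure $\nu$ carried by $n$-particle configurations in $S$ satisfies $\nu \ll \sigma_n^S$, then $\nu * \delta_{\Unif(S)}$ is carried by $(n+1)$-particle configurations and $\nu * \delta_{\Unif(S)} \ll \sigma_{n+1}^S$: indeed $\nu = \pi^{(n)}_*\bigl(g\,(dV|_S)^{\otimes n}\bigr)$ for a symmetric density $g$, and inserting an independent $\Unif(S)$ point produces $\pi^{(n+1)}_*\bigl(V(S)^{-1}g(x_1,\dots,x_n)\,(dV|_S)^{\otimes(n+1)}\bigr)$. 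Applying this to $\nu = c_n\,Q_{\mathfrak{Z}}(\cdot\,|\,\#_S = n)$ and summing over $n$, the restriction of $Q_{\mathfrak{Z}} * \delta_{\Unif(S)}$ to the $(n+1)$-particle stratum is absolutely continuous with respect to $\sigma_{n+1}^S$, which, since $c_{n+1}>0$, is absolutely continuous with respect to the restriction of $Q_{\mathfrak{Z}}$ to that stratum. Since the strata are mutually singular and exhaust $\Conf_f(S)$, and $Q_{\mathfrak{Z}} * \delta_{\Unif(S)}$ charges only strata with at least one particle, this yields $Q_{\mathfrak{Z}} * \delta_{\Unif(S)} \ll Q_{\mathfrak{Z}}$.

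I do not expect a serious obstacle: the analytic content — that inserting a uniform point preserves absolute continuity with respect to the compound Lebesgue measure — is a one-line computation once the particle number is fixed. The only delicate point is the bookkeeping in the first reduction, namely verifying that convolution with $\delta_{\Unif(S)}$ acts fibrewise over the $S^c$-coordinate and respects the disintegration \eqref{f-dis-int}, together with the measurability of the family $\mathfrak{Z}\mapsto Q_{\mathfrak{Z}}$, which is provided by Rohlin's disintegration theorem.
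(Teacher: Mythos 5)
Your argument is correct and follows essentially the same route as the paper: disintegrate over the $S^c$-component, observe that convolution with $\delta_{\Unif(S)}$ acts purely on the $S$-component (so the disintegration is preserved), and then on each fibre decompose over the particle count and use that inserting a $\Unif(S)$ point carries the $n$-stratum into the $(n+1)$-stratum preserving absolute continuity with respect to $\pi^{(n)}_*((dV|_S)^{\otimes n})$. The paper is somewhat terser, but both the reduction and the fibrewise argument coincide with yours.
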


\begin{proof}
By definition, we have
\[
\Big[ \pi^{(n)}_{*} \Big((\frac{1}{V(S)}dV|_S)^{\otimes n}\Big)\Big]* \delta_{\Unif(S)} = \pi^{(n+1)}_{*} \Big((\frac{1}{V(S)}dV|_S)^{\otimes (n+1)}\Big) \quad \text{for all $n\in\N$}
\]
and 
\[
\PP(\cdot |\X, S^{c}) = \sum_{n\in\N} \PP(\#_S = n| \X, S^c) \cdot  [\PP(\cdot | \X, S^c)](\cdot| \#_S  =n),  
\]
whence 
\begin{align}\label{cond-ins} 
\PP(\cdot |\X, S^{c}) * \delta_{\Unif(S)} \ll \PP(\cdot |\X, S^{c}). 
\end{align}

Now  \eqref{f-dis-int} implies 
\begin{multline}\label{conv-cond}
\PP * \delta_{\Unif(S)} = \int\limits_{\Conf(S^c)}  \Big(( \PP(\cdot| \mathfrak{Z}, S^c) \otimes \delta_{\mathfrak{Z}})* \delta_{\Unif(S)} \Big) [\PP]_{S^c}( d\mathfrak{Z})=
\\
= \int\limits_{\Conf(S^c)}  \Big(( \PP(\cdot| \mathfrak{Z}, S^c) * \delta_{\Unif(S)} )   \otimes \delta_{\mathfrak{Z}}   \Big) [\PP]_{S^c}(  d\mathfrak{Z}). 
\end{multline}
Combining \eqref{cond-ins} and \eqref{conv-cond}, we obtain the desired relation 
$$
\PP * \delta_{\Unif(S)} \ll \int\limits_{\Conf(S^c)}   \Big(\PP(\cdot| \mathfrak{Z}, S^c)    \otimes \delta_{\mathfrak{Z}}   \Big) [\PP]_{S^c}(  d\mathfrak{Z}) = \PP. 
$$
\end{proof}

\begin{lemma}\label{lem-abs}
Let $S \subset \R^d$ be a relatively compact open connected subset, equipped with a Radon measure $\mu$, absolutely continuous with respect to the Lebesgue measure on $S$ and whose support is the whole space $S$. Assume that $K$ is a { trace class} infinite rank positive strict contraction  on $L^2(S, \mu)$ such that
\[
K (x, y) = \sum_{k=1}^\infty \lambda_k  \phi_k(x) \overline{\phi_k(y)}, 
\]
where $0 < \lambda_k < 1$ and $\phi_k(x)$ is { real-analytic} on $x$ for all $k$. 
Then 
\[
\PP_K(\#_S = n) > 0 \an \PP_K(\cdot | \#_S =n)   \simeq \pi^{(n)}_{*} ((dV|_S)^{\otimes n}), \quad \text{for all $n\in\N$.} 
\]
\end{lemma}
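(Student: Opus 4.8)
The plan is to compute the law of $\#_S$ and the conditional laws $\PP_K(\cdot \mid \#_S = n)$ directly from the standard formulas for a determinantal process with trace-class kernel, and then to use real analyticity to rule out vanishing of the relevant densities. Recall that for a trace-class positive contraction $K$ on $L^2(S,\mu)$, the process $\PP_K$ is supported on $\Conf_f(S)$ and, writing the Janossy densities explicitly, for $n \ge 0$ the probability that $\#_S = n$ equals
\begin{align*}
\PP_K(\#_S = n) = \frac{\det(1-K)}{n!} \int_{S^n} \det\big(L(x_i,x_j)\big)_{1\le i,j\le n}\, d\mu(x_1)\cdots d\mu(x_n),
\end{align*}
where $L = K(1-K)^{-1}$ is the (trace-class, since $K$ is a strict contraction) positive operator with kernel $L(x,y)$; moreover, conditionally on $\#_S = n$, the $n$ unordered points have density proportional to $\det(L(x_i,x_j))_{1\le i,j\le n}$ against $\mu^{\otimes n}$, pushed forward by $\pi^{(n)}$. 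The first step is therefore to record these two facts, citing the trace-class formalism of \S\ref{sec-trace}, \cite{ST-DPP}, or \cite{Soshnikov-DP}.

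The second step is to show that $L(x,y)$ inherits real analyticity and an eigenexpansion of the same shape as $K$. Since $K$ is diagonalized by the $\phi_k$ with eigenvalues $\lambda_k \in (0,1)$, the operator $L$ is diagonalized by the same $\phi_k$ with eigenvalues $\mu_k := \lambda_k/(1-\lambda_k) > 0$, and $\sum_k \mu_k = \tr(L) < \infty$; the uniform-on-compacts bound $\sup_{x\in A}|\phi_k(x)| \le C_A$ (available because the $\phi_k$ are $L^2$-normalized and, being eigenfunctions of a trace-class operator with real-analytic kernel, satisfy a local sup bound exactly as in Lemma \ref{lem-krantz}) then shows $L(x,y) = \sum_k \mu_k \phi_k(x)\overline{\phi_k(y)}$ converges locally uniformly, so $L(x,y)$ is real analytic on $S\times S$.

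The third step, which is the heart of the argument, is to prove that $D_n(x_1,\dots,x_n) := \det(L(x_i,x_j))_{1\le i,j\le n}$ is not $\mu^{\otimes n}$-almost-everywhere zero on $S^n$, for every $n$. Here is where infinite rank enters: $D_n$ is a real analytic function on the connected set $S^n$, so by the identity theorem it is either identically zero or nonzero off a Lebesgue-null (hence $\mu^{\otimes n}$-null, by absolute continuity) set. It cannot be identically zero, because $L$ has infinitely many strictly positive eigenvalues, so one can choose $x_1,\dots,x_n$ making the Gram-type matrix nondegenerate: concretely, since $\rank(L) = \infty$, the functions $x\mapsto L(x,\cdot)$ span an infinite-dimensional subspace, so there exist points $x_1^{0},\dots,x_n^{0}$ at which $D_n$ does not vanish (otherwise every $n\times n$ minor of $L$ would vanish identically, forcing $\rank L < n$). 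Combined with real analyticity and connectedness of $S^n$, this gives $D_n > 0$ Lebesgue-a.e. on $S^n$. Consequently $\PP_K(\#_S = n) > 0$ for all $n$, and the conditional density of the unordered $n$-tuple is a strictly-positive-a.e. multiple of $d\mu^{\otimes n}$; pushing forward by $\pi^{(n)}$ and recalling that $\mu \simeq dV|_S$ yields $\PP_K(\cdot \mid \#_S = n) \simeq \pi^{(n)}_{*}((dV|_S)^{\otimes n})$, as claimed.

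The main obstacle is the third step: transferring the algebraic fact ``$\rank L = \infty \Rightarrow D_n \not\equiv 0$'' into the a.e.-positivity statement. The algebraic part is a minors argument (if all $n\times n$ minors of the integral kernel vanish identically on $S^n$, then on a dense set the vectors $L(x,\cdot)$ lie in a space of dimension $< n$, contradicting infinite rank — one should phrase this carefully, perhaps via the eigenexpansion: evaluating $D_n$ at points where the matrix $(\phi_k(x_i))$ has rank $n$ among indices with $\mu_k>0$ gives a nonzero value by Cauchy–Binet), and then the passage to ``a.e.'' is exactly the identity theorem for real analytic functions on the connected domain $S^n$ together with $\mu \ll dV$. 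One must be slightly careful that $S^n$ is connected (it is, as a product of connected sets) and that $D_n$, being a finite determinant of entries each real analytic on $S\times S$, is genuinely real analytic on $S^n$. With these points checked the lemma follows. $\qed$
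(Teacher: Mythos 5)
Your proposal takes a genuinely different route from the paper: you work with the $L$-ensemble ($L = K(1-K)^{-1}$) and the Janossy densities, whereas the paper invokes the Hough--Krishnapur--Peres--Vir\'ag mixture representation (Theorem \ref{thm-HKPV}), which writes $\PP_K$ as a mixture over Bernoulli choices $I=(I_k)$ of the rank-$|I|$ projection processes $\PP_{K_I}$. There the determinant $\det(K_\alpha(x_i,x_j))$ that has to be shown a.e.\ positive is built from a \emph{finite} sum of products of the real-analytic $\phi_k$, so its real analyticity is immediate. Your route can be made to work, but as written it has one genuine gap.

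The gap is in Step~2, where you claim $L(x,y)=\sum_k \mu_k\phi_k(x)\overline{\phi_k(y)}$ is real analytic because the eigenexpansion converges locally uniformly. Locally uniform convergence of a series of real-analytic functions does \emph{not} in general yield a real-analytic limit -- this is precisely the point the paper makes in Remark~\ref{real-complex-rem}, where it is emphasized that the conditional kernel's analyticity is established only because the $\varphi_k$ there are \emph{holomorphic}, and uniform limits preserve holomorphicity but not real analyticity. The accompanying local sup bound you cite from Lemma~\ref{lem-krantz} also rests on the mean-value property of holomorphic functions and has no analogue for general real-analytic eigenfunctions (one does get uniform-on-compacts convergence from Mercer's theorem, since $K$ and $L$ are continuous positive kernels on the relatively compact $S$, but that is a different matter). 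So $D_n=\det(L(x_i,x_j))$ is not known to be real analytic on $S^n$, and the identity-theorem step breaks down.

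The fix is the Cauchy--Binet expansion you mention in passing, and it should be promoted from a side remark to the main argument. Writing $\psi_k=\sqrt{\mu_k}\,\phi_k$, one has
\begin{equation*}
\det\bigl(L(x_i,x_j)\bigr)_{1\le i,j\le n}
 \;=\; \sum_{k_1<\cdots<k_n}\mu_{k_1}\cdots\mu_{k_n}\,\bigl|\det(\phi_{k_j}(x_i))_{1\le i,j\le n}\bigr|^2,
\end{equation*}
a sum of nonnegative terms, each of which \emph{is} real analytic on $S^n$ (a finite determinant of real-analytic functions times its conjugate). Since $\rank L=\infty$, the set of indices $k$ with $\mu_k>0$ is infinite, and for any $n$-subset of orthonormal $\phi_{k_j}$ the alternant $\det(\phi_{k_j}(x_i))$ is not identically zero (by the standard linear-independence-of-alternants argument). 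The zero set of $D_n$ is contained in the zero set of any single term, which is Lebesgue-null by the identity theorem on the connected domain $S^n$. This closes the gap and gives $D_n>0$ a.e., after which your Step~1 and the push-forward by $\pi^{(n)}$ (using $\mu\simeq dV|_S$) conclude exactly as you wrote. In effect, the Cauchy--Binet decomposition of the determinant is the kernel-side counterpart of the paper's HKPV decomposition of the process; both reduce the analyticity question to finitely many eigenfunctions, which is where the real-analyticity hypothesis is actually usable.

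Two minor points worth noting for completeness: the trace-class claim for $L$ needs \emph{both} $\|K\|<1$ (so $\mu_k\le\lambda_k/(1-\|K\|)$) and $\tr K<\infty$; and the final ``$\simeq$'' conclusion tacitly uses $\mu\simeq dV|_S$, which you correctly invoke and which holds in all applications of the lemma in the paper, where $\mu=\omega\,dV$ with $\essinf\omega>0$ on compacts.
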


\begin{proof}[Proof of Lemma \ref{lem-abs}]
To prove Lemma \ref{lem-abs} we will need the following theorem. 
\begin{theorem}[{Hough-Krishnapur-Peres-Vir{\'a}g \cite[Theorem 7]{HKPV}}]\label{thm-HKPV}
Assume that $K$ is a trace class positive strict contraction  on $L^2(S, \mu)$ such that
\[
K (x, y) = \sum_{k=1}^\infty \lambda_k  \phi_k(x) \overline{\phi_k(y)},  \quad 0 < \lambda_k < 1. 
\]
If  $I = (I_k)_{k=1}^\infty$ is a sequence of independent Bernoulli random variables satisfying
\[
\bold{P}(I_k = 1) = \lambda_k \an \bold{P}(I_k = 0) = 1 - \lambda_k, 
\] 
and  $\mathscr{X}_I$  the random configuration sampled with the determinantal measure induced by the kernel 
\[
K_I (x, y)  = \sum_{k=1}^\infty I_k \phi_k(x) \overline{\phi_k(y)},
\]
then
$
\mathcal{L}  ( \mathscr{X}_I) = \PP_{K}. 
$
\end{theorem}

We note that in Theorem \ref{thm-HKPV},  all the random objects $I = (I_k)_{k=1}^\infty$ and $\mathscr{X}_I$ are defined on a common probability space $(\Omega, \mathcal{B}, \bold{P})$. 
We construct a random configuration $\mathscr{X}_I$ with $\mathcal{L}  ( \mathscr{X}_I) = \PP_{K}$ in the way described in  Theorem \ref{thm-HKPV}:  first choose the $I_k$'s and then independently sample a discrete set with distribution $\PP_{K_I}$.  Now fix $n\in\N$ and set $C_n = \{\alpha \in  \{0, 1\}^\N:  \sum_{i = 1}^\infty \alpha_i = n\}$.  Note that $C_n$ is a {\it countable} set. Since $K$ is trace class, we have $\sum_k \lambda_k < \infty$. Therefore,  using the assumption $0 < \lambda_k < 1$,   that for any $\alpha \in C_n$, we obtain
\[
\bold{P}( I = \alpha)  = \prod_{i: \alpha_i =1} \lambda_i  \cdot \prod_{j: \alpha_j = 0} (1- \lambda_j)> 0. 
\]
In particular,  $\bold{P}( I \in C_n) > 0$ and for any $\alpha \in C_n$, we have $ \bold{P}(I = \alpha | I \in C_n ) > 0$. 

Using the equality $\mathcal{L}  ( \mathscr{X}_I) = \PP_{K}$, we obtain 
\[
\PP_K(\#_S = n) = \bold{P}( I \in C_n) > 0.
\]
We also have 
\[
\PP_K(\cdot | \#_S =n)  = \mathcal{L}(\mathscr{X}_I | \sum_{k=1}^\infty I_k = n) =  \mathcal{L}(\mathscr{X}_I | I \in C_n). 
\]
By the construction of $\mathscr{X}_I$, we have 
\begin{multline*}
 \mathcal{L}(\mathscr{X}_I | I \in C_n)  = \sum_{\alpha \in C_n}     \bold{P}(I = \alpha | I \in C_n )  \mathcal{L}(\mathscr{X}_I | I  = \alpha)=\\
  = \sum_{\alpha \in C_n}     \bold{P}(I = \alpha| I \in C_n) \mathcal{L}(\mathscr{X}_\alpha)
 = \sum_{\alpha \in C_n}     \bold{P}(I = \alpha | I \in C_n ) \PP_{K_{\alpha}}. 
\end{multline*}
Note that  for each $\alpha \in C_n$, the operator $K_\alpha$ is an orthogonal projection of rank $n$,  we have 
\begin{align}\label{n-rank}
\PP_{K_\alpha} =  \frac{\det(K_\alpha(x_i, x_j))_{1 \le i , j \le n} }{n!}  \cdot \prod_{i = 1}^n \frac{d\mu}{dV}(x_i) \cdot \pi^{(n)}_{*} ((dV|_S)^{\otimes n}). 
\end{align}
Since all the functions $\phi_k$'s are real analytic, the function 
\begin{align}\label{anal-density}
(x_1,\cdots, x_n) \mapsto \det(K_\alpha(x_i, x_j))_{1 \le i , j \le n}
\end{align}
is also real-analytic on the open connected set $S^n \subset (\R^d)^n$. The real-analytic function function \eqref{anal-density} is not identically zero,  hence its support must be the whole space $S^n$. Moreover, the assumption on $\mu$ implies that the support of the function $d\mu/dV$ is the whole space $S$. Therefore, for each $\alpha \in C_n$, we have 
\[
\PP_{K_\alpha} \simeq \pi^{(n)}_{*} ((dV|_S)^{\otimes n}). 
\]
This implies the desired relation 
\[
\PP_K(\cdot | \#_S =n)   \simeq \pi^{(n)}_{*} ((dV|_S)^{\otimes n}). 
\]
The proof of Lemma \ref{lem-abs} is complete. 
\end{proof}
\begin{remark}
 The relation \eqref{n-rank} still holds without the assumption  that all eigenfunctions $\phi_k$'s are  real analytic, and we still have 
\begin{align}\label{diff-non-anal}
\PP_K(\cdot | \#_S =n)   \ll \pi^{(n)}_{*} ((dV|_S)^{\otimes n}), \quad \text{for any $n\in\N$.} 
\end{align}
\end{remark}

\begin{proof}[Proof of Lemma \ref{lem-ins-palm}]
First of all, for any non-empty  relatively compact and connected open subset $B\subset U$, by Lemma \ref{lem-num-ins},  we have  $
K_\omega^{[\X, B^c]}\ne 0$  for $\PP_{K_\omega}$-almost every $\X$. By Corollary \ref{cor-rank},  $\rank(K_\omega^{[\X, B^c]}) = \infty$ for $\PP_{K_\omega}$-almost every $\X$. 
Note that  $K_\omega^{[\X, B^c]}$ is a trace class positive strict contraction on $L^{2}(B,\omega dV)$ and, by Lemma \ref{lem-real-anal}, the kernel function $(z, w) \mapsto K_\omega^{[\X, B^c]}(z, w)$ is real analytic. Lemma \ref{lem-abs} implies that for $\PP_{K_\omega}$-almost every $\X$, 
\begin{align}\label{diff-cond}
 \PP_{K_\omega^{[\X, B^c]}} (\cdot| \#_B= n)  \simeq \pi^{(n)}_{*} ((dV|_B)^{\otimes n}), \quad \text{for any $n\in\N$.}
\end{align}
Since $ \PP_{K_\omega}(\cdot| \X, B^c) =  \PP_{K_\omega^{[\X, B^c]}}$ for $\PP_{K_\omega}$-almost every $\X$, Lemma \ref{lem-nice} implies that $\PP_{K_\omega}$ is  insertion tolerant. 
 The proof of insertion tolerance tolerance for  $\PP_{K_\omega}^{\mathfrak{p}}$ is the same.
\end{proof}

\subsection{Diffusive property}\label{sec-diff}
For proving Lemma \ref{lem-re-f}, we need the following Lemma \ref{lem-S}.  By the natural identification  $\C^d \subset \R^{2d}$, the domain $U\subset \C^d$ is identified with a subset of $\R^{2d}$.   Recall that by  dyadic cubes we mean  { open} subsets of $\R^{2d}$ of the form
$
v+ \Delta_n,
$
where $v \in (2^{-n} \Z)^{2d}$,  $n \in \Z$ and 
\[
\Delta_n = \prod_{k=1}^{2d} (0, 2^{-n} ).
\] 
Let $\mathcal{C}$ be the collection of all unions of finitely many dyadic cubes whose closures are contained in $U$. Clearly $\mathcal{C}$ is  countable.

\begin{lemma}\label{lem-S}
Given any two coupled random configurations $\mathscr{X}$ and $\mathscr{Y}$ on $U$ defined on the same probability space  $(\Omega, \mathcal{B}, \bold{P})$,   such that 
\begin{align*}
\mathcal{L}(\mathscr{X}) = \PP_{K_\omega}, \, \mathcal{L}(\mathscr{Y}) = \PP_{K_\omega}^{\mathfrak{p}}\an \mathscr{Y}\subset \mathscr{X} \quad \text{$\bold{P}$-almost surely.}
\end{align*}
 Then there exists a $\mathcal{C}$-valued random variable $\bold{S}$, defined on the same probability space $(\Omega, \mathcal{B}, \bold{P})$ on which the random configurations $\mathscr{X}$ and $\mathscr{Y}$ are defined, such that the equality $
\mathscr{X}|_\bold{S} = \mathscr{X}\setminus \mathscr{Y}$ holds $\bold{P}$-almost surely.
\end{lemma}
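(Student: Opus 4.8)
The plan is to build $\bold{S}$ by a measurable selection inside the countable family $\mathcal{C}$. Two inputs drive the argument: by Lemma \ref{lem-Claim-A} the random configuration $\mathscr{X}\setminus\mathscr{Y}$ is $\bold{P}$-almost surely finite, and, since the correlation measures of $\PP_{K_\omega}$ are absolutely continuous with respect to Lebesgue measure while there are only countably many ``dyadic hyperplanes'' $\{z\in\R^{2d}:z_i=k2^{-n}\}$ $(1\le i\le 2d,\ k\in\Z,\ n\in\Z)$, almost surely no point of $\mathscr{X}$ lies on any such hyperplane. Let $\Omega_0\in\mathcal{B}$ be the full-measure event on which both of these hold and on which $\mathscr{X}$ is a simple configuration.

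First I would check that for every $\omega\in\Omega_0$ there is some $C\in\mathcal{C}$ with $\mathscr{X}\cap C=\mathscr{X}\setminus\mathscr{Y}$. Write $\mathscr{X}\setminus\mathscr{Y}=\{x_1,\dots,x_m\}$: since $\mathscr{X}$ is simple and $\mathscr{Y}\subset\mathscr{X}$, these are finitely many pairwise distinct points of $U$, none of which lies in $\mathscr{Y}$. Using local finiteness of $\mathscr{X}$ and openness of $U$, choose $r>0$ with $\overline{B(x_j,r)}\subset U$ and $\mathscr{X}\cap B(x_j,r)=\{x_j\}$ for every $j$. Since $x_j$ lies on no dyadic hyperplane, for each $n$ it is interior to a unique open dyadic cube $Q_j^{(n)}$ of side $2^{-n}$; choosing $n$ so large that $\mathrm{diam}\,Q_j^{(n)}<r$ for all $j$ gives $\overline{Q_j^{(n)}}\subset B(x_j,r)\subset U$ and $\mathscr{X}\cap Q_j^{(n)}=\{x_j\}$. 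Then $Q_1^{(n)}\cup\dots\cup Q_m^{(n)}\in\mathcal{C}$ and its intersection with $\mathscr{X}$ equals $\mathscr{X}\setminus\mathscr{Y}$, as desired.

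Next I would fix an enumeration $\mathcal{C}=\{C_1,C_2,\dots\}$ and, on $\Omega_0$, set $\bold{S}(\omega):=C_{j(\omega)}$ where $j(\omega)$ is the least index $k$ with $\mathscr{X}(\omega)\cap C_k=(\mathscr{X}\setminus\mathscr{Y})(\omega)$ (well defined by the previous step), while $\bold{S}:=C_1$ off $\Omega_0$. Since $\mathcal{C}$ is countable, measurability of $\bold{S}$ reduces to measurability of each event $\{\mathscr{X}\cap C_l=\mathscr{X}\setminus\mathscr{Y}\}$. Because $\mathscr{Y}\subset\mathscr{X}$, one checks that this event equals $\{\#(\mathscr{Y}\cap C_l)=0\}\cap\{\mathscr{X}\text{ and }\mathscr{Y}\text{ agree on }U\setminus C_l\}$, and the second event is $\bigcap_m\{\#(\mathscr{X}\cap(B_m\setminus C_l))=\#(\mathscr{Y}\cap(B_m\setminus C_l))\}$ for any exhaustion $B_m\uparrow U$ by relatively compact subsets; as each $C_l$ and each $B_m\setminus C_l$ is relatively compact in $U$, all the counting functionals involved are measurable, so the event lies in $\mathcal{B}$. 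This yields the claimed $\mathcal{C}$-valued random variable with $\mathscr{X}|_{\bold{S}}=\mathscr{X}\setminus\mathscr{Y}$ on $\Omega_0$, hence $\bold{P}$-almost surely.

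The one genuinely delicate point — and where I would be most careful — is that a point having a dyadic rational coordinate can \emph{never} be separated from the rest of $\R^{2d}$ by a finite union of \emph{open} dyadic cubes, so the construction of the cubes $Q_j^{(n)}$ breaks down on an a priori nonempty (but Lebesgue-null) set of configurations. This is exactly the role of the absolute continuity of the correlation measures of $\PP_{K_\omega}$: it confines $\mathscr{X}$, almost surely, to the complement of the countable union of dyadic hyperplanes. Everything else is routine point-process measurability bookkeeping.
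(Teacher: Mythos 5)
Your proof is correct and rests on the same three ingredients as the paper's: the almost-sure finiteness of $\mathscr{X}\setminus\mathscr{Y}$ from Lemma~\ref{lem-Claim-A}, absolute continuity of the intensity to exclude dyadic hyperplanes, and approximation by dyadic cubes. The implementations differ. The paper constructs $\bold{S}$ directly: it defines the random integer $\bold{n}$ from $\mathrm{dist}(\mathscr{X}\setminus\mathscr{Y},\mathscr{Y})$, then takes the union of all level-$(\bold{n}{+}1)$ dyadic cubes meeting $\mathscr{X}\setminus\mathscr{Y}$. You instead first verify that some $C\in\mathcal{C}$ works pointwise on a full-measure event, then define $\bold{S}$ as the first such $C$ in a fixed enumeration of the countable family $\mathcal{C}$, and check separately that each event $\{\mathscr{X}\cap C_l=\mathscr{X}\setminus\mathscr{Y}\}$ is Borel. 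Your ``least-index'' selection is a bit more robust in two small respects: you build in the condition $\overline{Q_j^{(n)}}\subset B(x_j,r)\subset U$, which is needed for membership in $\mathcal{C}$ but is not explicitly enforced in the paper's formula for $\bold{S}$; and you simply take $n$ large enough that $\mathrm{diam}\,Q_j^{(n)}<r$, whereas the paper's fixed offset (using level $\bold{n}{+}1$ when $\mathrm{dist}\ge 2^{-\bold{n}}$) needs the side length shrunk by an extra factor of order $\sqrt{d}$ for the cubes to avoid $\mathscr{Y}$ when $d\ge 2$. These are minor issues in the paper that are fixed by choosing a finer dyadic level, but your argument dispenses with the constant-chasing entirely at the small cost of an enumeration and a measurability check, which you carry out correctly.
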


\begin{proof}
Denote  $\mathscr{Z}  = \mathscr{X}\setminus \mathscr{Y}$. Recall that  $\mathscr{X}$ is locally finite.  The random configuration  $\mathscr{Z}$ is almost surely contained by $\mathscr{X}$ and by Lemma \ref{lem-Claim-A}, $\#(\mathscr{Z})<\infty$, $\bold{P}$-almost surely. Therefore,  the following random variable
\[
\mathrm{dist} (\mathscr{Z}, \mathscr{Y}) : = \inf\{| z - y| : z \in \mathscr{Z}, y \in \mathscr{Y}\}
\] 
satisfies 
\[
\mathrm{dist} (\mathscr{Z}, \mathscr{Y}) > 0 \quad \text{$\bold{P}$-almost surely.}
\]
We may define an integer-valued random variable $\bold{n}: \Omega \rightarrow \Z$ which is the unique integer such that 
\[
\frac{1}{2^{\bold{n}}}  \le  \mathrm{dist} (\mathscr{Z}, \mathscr{Y})  <   \frac{1}{2^{\bold{n-1}}}. 
\]
We can thus define measurably a finite subset $L \subset \Z^{2d}$ by
\[
L  : = \{ v \in   (2^{-n-1}\Z)^{2d}| (v + \Delta_{\bold{n}+1}) \cap \mathscr{Z} \ne \emptyset\}. 
\]
Since the intensity measure of $\mathscr{Z}$ is  
\[
(K_\omega(z, z) - K_\omega^{\mathfrak{p}}(z, z)) \omega(z) dV \ll dV,
\]
we have 
\[
\mathscr{Z} \cap  \bigcup_{n \in \Z} \bigcup_{v\in  (2^{-n-1}\Z)^{2d}}  ( v + \partial \Delta_{n +1}) = \emptyset \quad \text{$\bold{P}$-almost surely.}
\]
Introducing a $\mathcal{C}$-valued random variable $\bold{S}$ by the formula
\[
\bold{S}: = \bigcup_{v \in L }  (v +\Delta_{\bold{n}+1}),  
\]
we obtain that the equality
$
\mathscr{X}|_\bold{S} = \mathscr{X}\setminus \mathscr{Y}$ holds  $\bold{P}$-almost surely.
\end{proof}

\begin{proof}[Proof of Lemma \ref{lem-re-f}]
Denote  $\mathscr{Z}  = \mathscr{X}\setminus \mathscr{Y}$.  By Lemma \ref{lem-S}, there exists a $\mathcal{C}$-valued random variable $\bold{S}$, defined also on the same probability space $(\Omega, \mathcal{B}, \bold{P})$ on which the random configurations $\mathscr{X}$ and $\mathscr{Y}$ are defined, such that 
\[
\mathscr{X}|_\bold{S} = \mathscr{Z} \quad  \text{$\bold{P}$-almost surely.}
\] 
It follows that 
\[
\mathscr{X}|_{U\setminus \bold{S}} = \mathscr{Y} \quad  \text{$\bold{P}$-almost surely.} 
\]

For $\PP_{K_\omega}^{\mathfrak{p}}$-almost every configuration $\mathfrak{Y}$, we have 
\[
\mathcal{L}(\mathscr{Z}| \mathscr{Y} = \mathfrak{Y}) = \mathcal{L}\Big( \mathscr{X}|_\bold{S}    \Big| \mathscr{X}|_{U\setminus \bold{S}} = \mathfrak{Y} \Big)
\]
and hence for any $n\in \N$, we have
\[
\mathcal{L}(\mathscr{Z}| \mathscr{Y} = \mathfrak{Y} , \# \mathscr{Z}  = n)    = \mathcal{L}\Big( \mathscr{X}|_\bold{S}    \Big| \mathscr{X}|_{U\setminus \bold{S}} = \mathfrak{Y}, \# \mathscr{X}|_\bold{S}   = n \Big). 
\]
Since the random set $\bold{S}$ takes values in a  countable collection $\mathcal{C}$,  we have 
\begin{multline}\label{dec-cond}
 \mathcal{L}\Big( \mathscr{X}|_\bold{S}    \Big| \mathscr{X}|_{U\setminus \bold{S}} = \mathfrak{Y}, \# \mathscr{X}|_\bold{S}   = n \Big) =
\\
 =  \sum_{C\in \mathcal{C}}   \bold{P}( \bold{S} = C) \cdot \mathcal{L}\Big( \mathscr{X}|_\bold{S}    \Big| \mathscr{X}|_{U\setminus \bold{S}} = \mathfrak{Y} ,  \# \mathscr{X}|_\bold{S}   = n, \bold{S} = C\Big)=
\\
=   \sum_{C\in \mathcal{C}}   \bold{P}( \bold{S} = C) \cdot \mathcal{L}\Big( \mathscr{X}|_C    \Big| \mathscr{X}|_{U\setminus C} = \mathfrak{Y}, \# \mathscr{X}|_C  =  n, \bold{S} = C\Big). 
\end{multline} 

To prove that each summand in the right hand side of  \eqref{dec-cond} is absolutely continuous with respect to $\pi^{(n)}_{*} ((dV|_U)^{\otimes n})$, let us first note that for any $C$ with $\bold{P}(\bold{S} = C) > 0$ and $\PP_{K_\omega}^{\mathfrak{p}}$-almost every $\mathfrak{Y}$, we have
\begin{align}\label{summand-abs}
\mathcal{L}\Big( \mathscr{X}|_C    \Big| \mathscr{X}|_{U\setminus C} = \mathfrak{Y}, \# \mathscr{X}|_C  =  n, \bold{S} = C\Big) \ll  \mathcal{L}\Big( \mathscr{X}|_C    \Big| \mathscr{X}|_{U\setminus C} = \mathfrak{Y}, \# \mathscr{X}|_C  =  n \Big). 
\end{align}
Indeed, we have the identity 
\begin{multline*}
 \mathcal{L}\Big( \mathscr{X}|_C   \Big | \mathscr{X}|_{U\setminus C} = \mathfrak{Y}, \# \mathscr{X}|_C = n \Big)   = 
\\
 = \sum_{C'\in \mathcal{C}}  \bold{P}( \bold{S} = C') \cdot  \mathcal{L}\Big( \mathscr{X}|_C   \Big | \mathscr{X}|_{U\setminus C} = \mathfrak{Y}, \# \mathscr{X}|_C = n,   \bold{S} = C' \Big).
\end{multline*}
Each summand in the right hand side is absolutely continuous with respect to the left hand side , and we obtain \eqref{summand-abs} considering the summand with $C = C'$. 

It remains to prove, for  any $C$ satisfying $\bold{P}(\bold{S} = C) > 0$ and $\PP_{K_\omega}^{\mathfrak{p}}$-almost every configuration $\mathfrak{Y}$, the relation
\begin{align}\label{remain-to-prove}
\mathcal{L}\Big( \mathscr{X}|_C    \Big| \mathscr{X}|_{U\setminus C} = \mathfrak{Y}, \# \mathscr{X}|_C  =  n \Big) \ll \pi^{(n)}_{*} ((dV|_U)^{\otimes n}). 
\end{align}
 Now for any fixed $C\in \mathcal{C}$ such that  $\bold{P}(\bold{S} = C) >0$,  by the assumption $\mathcal{L}(\mathscr{X}) = \PP_{K_\omega}$, we have
\begin{align}\label{in-out-cond}
\mathcal{L}\Big( \mathscr{X}|_C   \Big | \mathscr{X}|_{U\setminus C} = \mathfrak{Y}  \Big) & = \PP_{K_\omega} (\cdot| \mathfrak{Y}, U\setminus C) =  \PP_{K_\omega^{[ \mathfrak{Y}, U\setminus C]}},
\end{align}
where we recall that the last equality above uses the description of the conditional measures of $\PP_{K_\omega}$, see \S \ref{sec-cond-rep} and the equality \eqref{cond-m-f}. The equality \eqref{in-out-cond} now implies
\begin{align}\label{in-out-number}
\mathcal{L}\Big( \mathscr{X}|_C   \Big | \mathscr{X}|_{U\setminus C} = \mathfrak{Y}, \# \mathscr{X}|_C = n \Big)  = \PP_{K_\omega^{[ \mathfrak{Y}, U\setminus C]}} (\cdot| \#_C = n). 
\end{align}
By Lemma \ref{lem-abs} and by the same argument as for obtaining \eqref{diff-cond}, we obtain 
\begin{align}\label{diff-cond-bis}
\PP_{K_\omega^{[ \mathfrak{Y}, U\setminus C]}} (\cdot| \#_C = n) \simeq \pi^{(n)}_{*} ((dV|_C)^{\otimes n}) \ll \pi^{(n)}_{*} ((dV|_U)^{\otimes n}). 
\end{align}
Combining \eqref{in-out-number} and \eqref{diff-cond-bis}, we get
\[
\mathcal{L}\Big( \mathscr{X}|_C   \Big | \mathscr{X}|_{U\setminus C} = \mathfrak{Y}, \# \mathscr{X}|_C = n \Big)   \ll \pi^{(n)}_{*} ((dV|_U)^{\otimes n}). 
\]
Since $n$ is arbitrary, we obtain the desired relation 
\begin{multline*}
\mathcal{L}(\mathscr{Z}| \mathscr{Y}  = \mathfrak{Y})  =   \mathcal{L}\Big( \mathscr{X}|_C   \Big | \mathscr{X}|_{U\setminus C} = \mathfrak{Y}) =
\\
 = \sum_{n =0}^\infty  \bold{P}( \mathscr{X}|_C = n) \cdot \mathcal{L}\Big( \mathscr{X}|_C   \Big | \mathscr{X}|_{U\setminus C} = \mathfrak{Y}, \# \mathscr{X}|_C = n \Big)  \ll
\\
 \ll   \delta_{\emptyset}  + \sum_{n=1}^\infty \pi^{(n)}_{*} ((dV|_U)^{\otimes n}) = \sigma_U. 
\end{multline*}
\end{proof}

\begin{remark}
We emphasize that in the proof of  Lemma \ref{lem-re-f},  almost sure equalities and the resulting exclusion of null sets 
 is only used  countably many times. 
\end{remark}

\subsection{Proof of Theorem \ref{thm-q-hol}}

The proof of Theorem \ref{thm-q-hol} is similar to that of Theorem \ref{thm-palm-eq}. 
Recall that  $H \subset A_q^2(D,\omega)$ is assumed to be  a non-zero closed subspace and an $H^\infty(D)$-sub-module of $A_q^2(D,\omega)$. Under the hypothesis of Theorem \ref{thm-q-hol}, the algebra $H^\infty(D)$ has infinite dimension. An application of Lemma  \ref{lem-strict-c} yields the deletion tolerance of the determinantal  point process $\PP_{\Pi_H}$, and the same argument as in \S \ref{sec-del} yields that all the reduced Palm measures of $\PP_{\Pi_H}$ of arbitrary orders are absolutely continuous with respect to $\PP_{\Pi_H}$. 

For the converse relation $\PP_{\Pi_H} \ll \PP_{\Pi_H}^{\mathfrak{p}}$, using similar arguments as in \S \ref{sec-dif-direction},  we only need to show that $ \PP_{\Pi_H}^{\mathfrak{p}}$ is insertion tolerant and for $ \PP_{\Pi_H}^{\mathfrak{p}}$-almost every $\X$,  the conditional kernel  $\Big( \Pi_H^{\mathfrak{p}}\Big)^{[\X, B^c]}$
is real analytic for any relatively compact connected open subset $B \subset D$. Recall that a function $f: D\rightarrow \C$ is $q$-holomorphic if and only if 
\[
f(z) = \sum_{j=0}^{q-1} \bar{z}^j f_j(z),
\]
with $f_1, \cdots, f_q$ all holomorphic and the fact that uniform convergence on compact subsets preserves the class of $q$-holomorphic functions, see Balk \cite[p. 206]{Balk}.  The argument in the proof of Lemma \ref{lem-real-anal}, applied to our context, yields that
\[
\Big( \Pi_H^{\mathfrak{p}}\Big)^{[\X, B^c]}(z, \bar{w}) = \sum_{i, j =0}^{q-1} \bar{z}^i \bar{w}^j \sum_{k=0}^\infty \phi_{k, j} (z)  \overline{\phi_{k, j} (\bar{w})}, 
\]
with $\phi_{k, j}$  holomorphic functions on $B$ and the convergence taking place
 uniformly on any compact subsets of $
B\times B$. Therefore, the function  $(z, w) \mapsto ( \Pi_H^{\mathfrak{p}})^{[\X, B^c]}(z, w)$ is indeed real-analytic, and the   relation $\PP_{\Pi_H} \ll \PP_{\Pi_H}^{\mathfrak{p}}$ follows.

{\bf{Acknowledgements.}} 
We are deeply grateful to Alexei Klimenko for useful discussions and very helpful comments. The research of A. Bufetov and S. Fan on this project has received funding from the European Research Council (ERC) under the European Union's Horizon 2020 research and innovation programme under grant agreement No 647133 (ICHAOS). A. Bufetov has also been funded by the Grant MD 5991.2016.1 of the President of the Russian Federation, by  the Russian Academic Excellence Project `5-100' and by the Chaire Gabriel Lam\'e at the Chebyshev Laboratory of the SPbSU, a joint initiative of the French Embassy in the Russian Federation and the Saint-Petersburg State
University.  Y. Qiu is supported by the grant IDEX UNITI-ANR-11-IDEX-0002-02, financed by Programme ``Investissements d'Avenir'' of the Government of the French Republic managed by the French National Research Agency. Part of this work was carried out at the Institut Henri Poincar{\'e} and at the Centre international de rencontres math{\'e}matiques in the framework of the  CIRM ``recherche en petits groupes'' programme. We are deeply grateful to these institutions for their warm hospitality.

%
%

\def\cprime{$'$} \def\cydot{\leavevmode\raise.4ex\hbox{.}}

\end{document}